\theoremstyle{plain} 
\newtheorem{thm}{Theorem} 
\newtheorem{lem}[thm]{Lemma} 
\newtheorem{cor}[thm]{Corollary} 
\theoremstyle{definition} 
\newtheorem*{defn}{Definition}
\theoremstyle{remark} 
\newtheorem*{rem}{Remark} 
\DeclareMathOperator{\supp}{supp} 
\DeclareMathOperator{\charac}{char} 
\DeclareMathOperator{\mre}{Re} 
\DeclareMathOperator{\mim}{Im}
\def\DD{\mathbb D} 
\def\CC{\mathbb C} 
\def\RR{\mathbb R} 
\def\TT{\mathbb T} 
\def\NN{\mathbb N}
\def\veps{\varepsilon}
\title{Composition operators and embedding theorems for some function spaces of Dirichlet series} 
\date{\today} 
\author{Fr\'{e}d\'{e}ric Bayart} \address{Clermont Universit\'{e}, Universit\'{e} Blaise Pascal, Laboratoire de Math\'{e}matiques, UMR 6620 - CNRS, Campus des C\'{e}zeaux, 3, Place Vasarely, TSA 60026, CS 60026, 63178 Aubi\`{e}re cedex} \email{bayart@math.univ-bpclermont.fr}
\author{Ole Fredrik Brevig} \address{Department of Mathematical Sciences, Norwegian University of Science and Technology (NTNU), NO-7491 Trondheim, Norway} \email{ole.brevig@math.ntnu.no}
\thanks{The second author is supported by Grant 227768 of the Research Council of Norway.}
\subjclass[2010]{Primary 47B33. Secondary 30B50, 30H10, 30H20.}
\begin{document}
\begin{abstract}
	We observe that local embedding problems for certain Hardy and Bergman spaces of Dirichlet series are equivalent to boundedness of a class of composition operators.  Following this, we perform a careful study of such composition operators generated by polynomial symbols $\varphi$ on a scale of Bergman--type Hilbert spaces $\mathcal{D}_\alpha$. We investigate the optimal $\beta$ such that the composition operator $\mathcal{C}_\varphi$ maps $\mathcal{D}_\alpha$ boundedly into $\mathcal{D}_\beta$. We also prove a new embedding theorem for the non-Hilbertian Hardy space $\mathcal H^p$ into a Bergman space in the half-plane and use it to consider composition operators generated by polynomial symbols on $\mathcal{H}^p$, finding the first non-trivial results of this type. The embedding also yields a new result for the functional associated to the multiplicative Hilbert matrix.
\end{abstract}

\maketitle

\section{Introduction} \label{sec:intro} A paper by Gordon and Hedenmalm \cite{GH99} initiated the study of composition operators acting on function spaces of Dirichlet series, $f(s)=\sum_{n\geq1}a_nn^{-s}$. Their object of study was the Hilbert space of Dirichlet series with square-summable coefficients, $\mathcal{H}^2$. In this paper, we consider composition operators acting on various scales of function spaces of Dirichlet series.

For $1\leq p < \infty$, we follow \cite{Bayart02} and define the Hardy space $\mathcal{H}^p$ as the Banach space completion of Dirichlet polynomials $P(s) = \sum_{n=1}^N a_n n^{-s}$ in the Besicovitch norm 
\begin{equation}
	\label{eq:besicovitch} \|P\|_{\mathcal{H}^p} := \lim_{T\to\infty}\left(\frac{1}{2T}\int_{-T}^T |P(it)|^p\,dt \right)^\frac{1}{p}. 
\end{equation}
The spaces $\mathcal{H}^p$ are Dirichlet series analogues of the classical Hardy spaces in unit disc. We refer to \cite{Q15} and to \cite[Ch.~6]{QQ13} for basic properties of $\mathcal{H}^p$, mentioning for the moment only that their elements are absolutely convergent in the half-plane $\mathbb{C}_{1/2}$, where $\mathbb C_\theta := \{s\in \mathbb{C} \,:\, \mre(s)>\theta\}$. 

For $\alpha \in \mathbb{R}$, we let $\mathcal{D}_\alpha$ denote the Hilbert space consisting of Dirichlet series $f$ satisfying 
\begin{equation}
	\label{eq:Dalpha} \|f\|_{\mathcal{D}_\alpha} := \left(\sum_{n=1}^\infty \frac{|a_n|^2}{[d(n)]^\alpha}\right)^\frac{1}{2}<\infty. 
\end{equation}
Here $d(n)$ denotes the number of divisors of the positive integer $n$. Note that $\mathcal{D}_0=\mathcal{H}^2$. We are interested in the range $\alpha\geq0$ and, as explained in \cite{BB14}, these spaces may be thought of as Dirichlet series analogues of the classical scale of weighted Bergman spaces in the unit disc. Since $d(n)=\mathcal{O}\left(n^\varepsilon\right)$ for every $\varepsilon>0$, it follows from the Cauchy--Schwarz inequality that Dirichlet series in $\mathcal{D}_\alpha$ also are absolutely convergent in $\mathbb{C}_{1/2}$.

Due to an insight of H.~Bohr (see Section~\ref{sec:bohr}), both $\mathcal{H}^p$ and $\mathcal{D}_\alpha$ can be identified with certain function spaces in countably infinite number of complex variables, and --- consequently --- the norms \eqref{eq:besicovitch} and \eqref{eq:Dalpha} can be computed as integrals on the polytorus $\mathbb{T}^\infty$ or in the polydisc $\mathbb{D}^\infty$, respectively.

In an attempt to better understand these spaces, their composition operators $\mathcal C_{\varphi}(f)=f\circ\varphi$ have recently been investigated in a series of papers. It is well-known (see \cite{BB14,Bayart02,GH99,QS15}) that any function $\varphi \colon \mathbb{C}_{1/2} \to \mathbb{C}_{1/2}$ defining a bounded composition operator from $\mathcal{H}^p$ to $\mathcal{H}^q$, for some $p,q\geq1$, or from $\mathcal{D}_\alpha$ to $\mathcal{D}_\beta$, for some $\alpha,\beta\geq0$, necessarily is a member of the following class.
\begin{defn}
	The \emph{Gordon--Hedenmalm class}, denoted $\mathcal{G}$, is the set of functions $\varphi\colon\mathbb{C}_{1/2}\to\mathbb{C}_{1/2}$ of the form
	\[\varphi(s) = c_0 s + \sum_{n=1}^\infty c_n n^{-s} =: c_0s + \varphi_0(s),\]
	where $c_0$ is a non-negative integer called the \emph{characteristic} of $\varphi$ and is denoted $\charac(\varphi)$, the Dirichlet series $\varphi_0$ converges uniformly in $\mathbb{C}_\varepsilon$ $(\varepsilon>0)$ and has the following mapping properties: 
	\begin{itemize}
		\item[(a)] If $c_0=0$, then $\varphi_0(\mathbb{C}_0)\subset \mathbb{C}_{1/2}$. 
		\item[(b)] If $c_0\geq1$, then either $\varphi_0\equiv 0$ or $\varphi_0(\mathbb{C}_0)\subset \mathbb{C}_0$. 
	\end{itemize}
\end{defn}

Regarding sufficient conditions, the case $\charac(\varphi)\geq1$ is the best understood. It was shown in \cite{Bayart02} that (b) is sufficient for boundedness of $\mathcal{C}_\varphi$ from $\mathcal{H}^p$ to $\mathcal{H}^p$ and in \cite{BB14} that the same holds for boundedness of $\mathcal{C}_\varphi$ from $\mathcal{D}_\alpha$ to $\mathcal{D}_\alpha$.

The case $\charac(\varphi)=0$, which is the topic of this paper, is more difficult. Here it is only known that (a) is sufficient for boundedness of $\mathcal{C}_\varphi$ from $\mathcal{H}^p$ to $\mathcal{H}^p$ if $p$ is an even integer. In \cite{BB14}, it was shown that if $\varphi\in\mathcal{G}$ with $\charac(\varphi)=0$, then $\mathcal C_\varphi$ maps $\mathcal D_\alpha$ into $\mathcal D_{2^\alpha-1}$ (which is smaller than $\mathcal D_\alpha$ if $0<\alpha<1$ and larger than $\mathcal D_\alpha$ if $\alpha>1$). It was left open whether the value $2^\alpha-1$ is optimal or not.

The sticking point seems to be that in order to prove sufficient conditions for boundedness of composition operators with $\charac(\varphi)=0$, we require an embedding of the function spaces of Dirichlet series into certain classical function spaces in the half-plane $\mathbb{C}_{1/2}$. The existence of embeddings in the non-Hilbertian case is a well-known open problem in the field.

This paper is initiated by the observation that such embeddings are in fact equivalent to the sufficiency of condition (a). The precise statement of this equivalence is presented in Theorem~\ref{thm:Hp} (for $\mathcal{H}^p$) and Theorem~\ref{thm:Dequiv} (for $\mathcal{D}_\alpha$) below. Our approach is related to the transference principle introduced in \cite{QS15}. As a corollary, we obtain that the parameter $2^\alpha-1$ discussed above is sharp, since it was demonstrated in \cite{Olsen11} that the corresponding embedding is optimal. 

We also discuss embeddings of $\mathcal H^p$ when $1\leq p<2$. Although we were unable to prove that $\mathcal H^p$ embeds into the corresponding conformally invariant Hardy space of $\CC_{1/2}$, we show that it embeds into an optimal conformally invariant Bergman space.

\begin{thm}\label{thm:bergmanemb}
Let $1\leq p<2$. There exists a constant $C_p>0$ such that
\[\left(\int_{\mathbb{R}}\int_{1/2}^\infty |f(s)|^2\, \left(\sigma-\frac{1}{2}\right)^{\frac{2}{p}-2}\frac{d\sigma dt}{|s+1/2|^{4/p}}\right)^\frac{1}{2}\leq C_p \|f\|_{\mathcal{H}^p},\]
for every $f\in\mathcal{H}^p$. The exponent $\frac{2}{p}-2$ is the smallest possible.
\end{thm}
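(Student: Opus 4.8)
The plan is to recognize the stated measure as the conformal image of the classical Hardy--Littlewood Bergman measure and then to transfer the Hardy--Littlewood embedding to the Dirichlet series setting. First I would fix the Cayley transform $\lambda(s)=\frac{s-3/2}{s+1/2}$, which maps $\CC_{1/2}$ conformally onto $\DD$ with $\lambda(3/2)=0$. A direct computation gives $1-|\lambda(s)|^2=\frac{4(\sigma-1/2)}{|s+1/2|^2}$ and $|\lambda'(s)|^2=\frac{4}{|s+1/2|^4}$, so that
\[\left(1-|\lambda(s)|^2\right)^{\frac{2}{p}-2}|\lambda'(s)|^2\,d\sigma\,dt=4^{\frac{2}{p}-1}\left(\sigma-\frac{1}{2}\right)^{\frac{2}{p}-2}\frac{d\sigma\,dt}{|s+1/2|^{4/p}}.\]
Hence, writing $g=f\circ\lambda^{-1}$, the left-hand side of the theorem is a constant multiple of $\left(\int_\DD|g(w)|^2(1-|w|^2)^{\frac{2}{p}-2}\,dA(w)\right)^{1/2}$, and the assertion becomes that $f\mapsto f\circ\lambda^{-1}$ maps $\mathcal H^p$ boundedly into the weighted Bergman space of $\DD$ with weight $(1-|w|^2)^{\frac{2}{p}-2}$. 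This is precisely the $\mathcal H^p$-analogue of the classical Hardy--Littlewood embedding $H^p(\DD)\hookrightarrow A^2_{2/p-2}(\DD)$, and it is meaningful exactly for $1\le p<2$, where the weight exponent $\frac{2}{p}-2$ lies in $(-1,0]$ and is therefore integrable. The decisive structural remark is that one may \emph{not} simply invoke $\|g\|_{H^p(\DD)}\lesssim\|f\|_{\mathcal H^p}$ and then apply Hardy--Littlewood, since that first inequality is exactly the (open) conformally invariant Hardy embedding; the integrability of the weight must instead be exploited directly.

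To carry out the transfer I would pass to the Bohr lift $F$ of $f$ on the infinite polytorus, for which $\|F\|_{H^p(\TT^\infty)}=\|f\|_{\mathcal H^p}$, and use $f(\sigma+it)=F(p_1^{-s},p_2^{-s},\dots)$. For fixed $\sigma>1/2$ the Kronecker flow $t\mapsto(p_1^{-it},p_2^{-it},\dots)$ equidistributes on $\TT^\infty$, so the flat vertical means $\frac{1}{2T}\int_{-T}^T|f(\sigma+it)|^2\,dt$ converge to the $H^2$-mean of $F$ over the subtorus at radius level $(p_j^{-\sigma})_j$; since the $t$-weight $|s+1/2|^{-4/p}$ is integrable, it can be reinstated by an approximation and Fubini argument. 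The crucial and most delicate point is that the crude pointwise bound $|f(s)|\lesssim(\sigma-1/2)^{-1/p}\|f\|_{\mathcal H^p}$ is too lossy near the boundary to reach the endpoint, so averaging is essential: I would use the subharmonicity of $|f|^p$ on discs $D(s_0,r)$ with $r\asymp\sigma_0-\frac{1}{2}$, giving $|f(s_0)|^2\lesssim\left(r^{-2}\int_{D(s_0,r)}|f|^p\,dA\right)^{2/p}$, then integrate against the weight and apply Minkowski's inequality with exponent $2/p\ge1$. This collapses the $L^2$ Bergman quantity to an $L^p$ area quantity $\int_{\CC_{1/2}}|f|^p\,d\nu_p$ controlled by $\|f\|_{\mathcal H^p}^p$ through the available mean value estimates for $\mathcal H^p$, with the powers of $\sigma-\frac{1}{2}$ arranged so that the endpoint $\frac{2}{p}-2$ survives rather than degenerating to a strict inequality. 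The main obstacle is exactly this endpoint: converting the $L^2$ Bergman integral into something bounded by the $\mathcal H^p$ norm while avoiding the unavailable route through $H^p(\CC_{1/2})$, and tracking the scale $\sigma-\frac{1}{2}$ faithfully through the subharmonicity step.

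For the sharpness of the exponent I would show that for every $\gamma<\frac{2}{p}-2$ there is an $f\in\mathcal H^p$ for which the integral with $(\sigma-\frac{1}{2})^\gamma$ in place of $(\sigma-\frac{1}{2})^{\frac{2}{p}-2}$ diverges. Here one must be careful: the conformal pullback $g\circ\lambda$ of the classical disc extremal $g(w)=(1-w)^{-a}$ equals $2^{-a}(s+1/2)^a$, which is not a Dirichlet series, and any symbol using only finitely many primes maps $\CC_{1/2}$ into $\prod_j\{|z_j|<p_j^{-1/2}\}$, so its Bohr lift never approaches the distinguished boundary and the resulting $f$ stays bounded on $\CC_{1/2}$. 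Thus genuine extremals must involve all primes, and the natural seed is $\zeta(2s)=\prod_p(1-p^{-2s})^{-1}$, which is singular exactly at the boundary point $s=1/2$ with $\zeta(2s)\sim\frac{1}{2(s-1/2)}$. I would regularize a suitable power $\zeta(2s)^a$ (or a comparable multiplicative construction whose coefficients concentrate on the large primes) so that it lies in $\mathcal H^p$ while matching the critical Hardy--Littlewood growth $(\sigma-\frac{1}{2})^{-a}$, and then verify that the weighted integral diverges precisely when $\gamma<\frac{2}{p}-2$. This mirrors the sharpness of the Hardy--Littlewood exponent in the disc and confirms that $\frac{2}{p}-2$ is the smallest admissible value.
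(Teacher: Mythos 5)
Your reduction of the left-hand side to a weighted Bergman integral over $\DD$ via the Cayley transform is correct (it is exactly the paper's reformulation $\|f\|_{D_{2/p-1,\operatorname{i}}}\ll\|f\|_{\mathcal H^p}$), and you rightly note that one cannot route the argument through the open conformal Hardy embedding. The gap is the step where you claim the resulting $L^p$ area quantity is ``controlled by $\|f\|_{\mathcal H^p}^p$ through the available mean value estimates'': no such estimates are available, and this is exactly where the difficulty lives. If you carry out the subharmonicity-plus-Minkowski scheme with $r\asymp\sigma_0-\tfrac12$ against the measure $(\sigma_0-\tfrac12)^{2/p-2}|s_0+1/2|^{-4/p}\,dm_1(s_0)$, the dual weight that $|f|^p$ must be integrated against comes out comparable to $(\sigma-\tfrac12)^{-1}|s+1/2|^{-2}\,dm_1(s)$; bounding $\int_{\CC_{1/2}}|f|^p$ against this requires, at the very least, uniform boundedness of $\|f(\cdot+\sigma-\tfrac12)\|_{H^p_{\operatorname{i}}}$ as $\sigma\to\tfrac12^+$, i.e.\ essentially the local embedding \eqref{eq:localemb}, which is open for $p$ not an even integer. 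A scaling check confirms this is structural: any pointwise/subharmonic conversion of the $L^2$ Bergman norm into an $L^p$ quantity forces a measure with linear (Carleson-type) mass near the boundary, so the scheme circles back to the obstruction you set out to avoid.

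The actual proof runs through a Hilbertian mechanism your proposal never reaches. Following your own Kronecker-flow computation to its end, the square of the left-hand side is comparable to $\sum_{n\geq 2}|a_n|^2\,n^{-1}(\log n)^{1-2/p}$, so the theorem is equivalent to a weighted $\ell^2$ coefficient inequality. The paper proves this by combining Weissler's hypercontractive estimate $\sum_k|a_k|^2(p/2)^k\leq\|f\|_{H^p(\DD)}^2$ with a multiplicative iteration over the Bohr lift (Lemmas \ref{lem:weissler} and \ref{lem:iteration}) to get the contractive inequality $\sum_n|a_n|^2(p/2)^{\Omega(n)}\leq\|f\|_{\mathcal H^p}^2$, and then uses the Selberg--Delange average-order estimate $\sum_{n\leq x}(2/p)^{\Omega(n)}\asymp x(\log x)^{2/p-1}$ together with Olsen's embedding criterion to pass from the weight $(2/p)^{\Omega(n)}$ to $D_{2/p-1,\operatorname{i}}$. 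No real-variable argument is known to substitute for this hypercontractive input. Your sharpness discussion is in the right spirit (a zeta power singular at $s=1/2$), but the clean choice is $f_\veps=\zeta(s+1/2+\veps)^{2/p}/\zeta(1+2\veps)^{1/p}$, whose $\mathcal H^p$ norm equals $1$ exactly, which makes the divergence computation for exponents below $\tfrac2p-2$ immediate.
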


We then perform a careful study of composition operators with polynomial symbols mapping $\mathcal{D}_\alpha$ to $\mathcal{D}_\beta$, in the spirit of \cite{BB15}. We show that for certain polynomial symbols, $\mathcal C_\varphi$ maps $\mathcal D_\alpha$ into $\mathcal D_{\beta}$ with $\beta<2^\alpha-1$ and that the optimality of $\beta=2^\alpha-1$ also can be decided by investigating the most simple non-trivial symbol, namely $\varphi(s)=3/2-2^{-s}$. 

Consequently, we consider boundedness of this simple composition operator an interesting necessary condition for the embedding problem for $\mathcal{H}^p$. This leads us to an in-depth study of composition operators with linear symbols on $\mathcal H^p$. By using Theorem~\ref{thm:bergmanemb} and estimates of Carleson measures, we prove the following result.

\begin{thm}\label{thm:linearHp}
Let $\varphi(s) = c_1 + \sum_{j=1}^d c_{p_j}p_j^{-s}$ be a Dirichlet polynomial supported on the primes such that $c_{p_j}\neq 0$ for $j=1,\,\dots,\,d$. If $\varphi \in \mathcal{G}$ and $d\geq2$, then $\mathcal C_\varphi$ is bounded on $\mathcal H^p$ for every $p \in [1,\infty)$.
\end{thm}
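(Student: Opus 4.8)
The plan is to transfer the problem to a finite polytorus and recognize $\mathcal C_\varphi$ as a Carleson-type embedding. Writing $\varphi(s)=c_1+\sum_{j=1}^d c_{p_j}p_j^{-s}$ and using Bohr's point of view, for $f=\sum_n a_n n^{-s}\in\mathcal H^p$ the composition $f\circ\varphi$ is again a Dirichlet series, now supported on the multiplicative semigroup generated by $p_1,\dots,p_d$, and its Bohr lift on $\TT^d$ is simply $z\mapsto f(\Phi(z))$ with the affine symbol $\Phi(z)=c_1+\sum_{j=1}^d c_{p_j}z_j$. Since $\log p_1,\dots,\log p_d$ are rationally independent, the Besicovitch mean defining $\|\cdot\|_{\mathcal H^p}$ equals the Haar integral over $\TT^d$, so that
\[\|\mathcal C_\varphi f\|_{\mathcal H^p}^p=\int_{\TT^d}|f(\Phi(z))|^p\,dm(z)=\int_{\CC_{1/2}}|f(w)|^p\,d\mu_\varphi(w),\qquad \mu_\varphi:=\Phi_\ast m.\]
Thus it suffices to prove the Carleson-type inequality $\int|f|^p\,d\mu_\varphi\le C_p\|f\|_{\mathcal H^p}^p$, and the whole question becomes an analysis of the finite pushforward measure $\mu_\varphi$ on $\CC_{1/2}$.

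Since $\varphi\in\mathcal G$ has $\charac(\varphi)=0$, condition (a) forces $\Phi(\TT^d)\subset\overline{\CC_{1/2}}$, and $\mre\Phi$ attains its minimum $\mre c_1-\sum_j|c_{p_j}|$ on $\TT^d$. If this minimum is $>\tfrac12$, then $\mu_\varphi$ is compactly supported inside $\CC_{1/2}$ and the inequality follows at once from the uniform boundedness of point evaluations of $\mathcal H^p$ on compact subsets of $\CC_{1/2}$ together with $\mu_\varphi(\CC_{1/2})=1$. The substantial case is when the minimum equals $\tfrac12$: then it is attained at the single point $z^\ast$ with $\theta_j^\ast=\pi-\arg c_{p_j}$, the touching point is $w_0=\tfrac12+i\,\mim c_1$, and a Taylor expansion gives the nondegenerate model $\mre\Phi(z)-\tfrac12\approx\sum_j\tfrac{|c_{p_j}|}{2}u_j^2$ and $\mim\Phi(z)-\mim c_1\approx-\sum_j|c_{p_j}|u_j$ near $z^\ast$. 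From this I would record, for the dyadic strips $S_k=\{2^{-k-1}<\sigma-\tfrac12\le 2^{-k}\}$, the estimates $\mu_\varphi(S_k)\lesssim 2^{-kd/2}$ and the density comparison $\mu_\varphi|_{S_k}\lesssim 2^{-k\delta}\,\nu_p|_{S_k}$, where $d\nu_p(w)=(\sigma-\tfrac12)^{2/p-2}|w+\tfrac12|^{-4/p}\,dA(w)$ is the measure of Theorem~\ref{thm:bergmanemb} (whose factor $|w+\tfrac12|^{-4/p}$ is bounded above and below on the compact support of $\mu_\varphi$ and may be absorbed into constants) and $\delta=\tfrac{d-3}{2}-(\tfrac2p-2)$; away from $w_0$ the measure is again harmless. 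I expect the rigorous derivation of these local measure and density estimates (a coarea/stationary-phase analysis of the pushforward, and its uniformity over the strips) to be the most delicate technical point, the saving grace being that a touching minimum is automatically nondegenerate of full rank $d$, so no genuinely degenerate critical points occur.

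The heart of the proof is then the estimate near $w_0$. On each strip I would apply Hölder with exponents $2/p$ and its conjugate to pass from $L^p(\mu_\varphi)$ to $L^2(\mu_\varphi)$,
\[\int_{S_k}|f|^p\,d\mu_\varphi\le\Big(\int_{S_k}|f|^2\,d\mu_\varphi\Big)^{p/2}\mu_\varphi(S_k)^{1-p/2}\lesssim 2^{-k\delta p/2}\,A_k^{p/2}\,2^{-kd(2-p)/4},\]
where $A_k:=\int_{S_k}|f|^2\,d\nu_p$ and the density comparison was used. Summing in $k$ and applying Hölder once more in the dyadic index against $\sum_k A_k$ reduces everything to the convergence of a single geometric series; a short computation shows the exponent governing that series is positive exactly when $d>2-\tfrac p2$, which for every $p\in[1,2)$ is guaranteed by the hypothesis $d\ge 2$. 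At that stage Theorem~\ref{thm:bergmanemb} delivers the final bound, since $\sum_k A_k\le\int_{\CC_{1/2}}|f|^2\,d\nu_p\le C_p^2\|f\|_{\mathcal H^p}^2$. This is the step where both $d\ge2$ and the sharp Bergman embedding are indispensable: a single prime ($d=1$) produces a one-dimensional, area-singular $\mu_\varphi$ for which the series diverges, matching the intuition that $d=1$ would require the (open) Hardy embedding.

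Finally, for $p\ge 2$ the embedding of Theorem~\ref{thm:bergmanemb} is unavailable, and instead I would argue by interpolation. At the endpoint $p=2$ boundedness of $\mathcal C_\varphi$ on $\mathcal H^2=\mathcal D_0$ is the case $\alpha=0$ of the result from \cite{BB14} (namely $\mathcal D_\alpha\to\mathcal D_{2^\alpha-1}$), while on $\mathcal H^\infty$ it is immediate because $f\circ\varphi$ is a bounded Dirichlet series whenever $f$ is. Complex interpolation of the scale $\mathcal H^p$ between these two endpoints then yields boundedness of $\mathcal C_\varphi$ on $\mathcal H^p$ for all $2\le p<\infty$, completing the proof.
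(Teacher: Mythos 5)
Your reduction to the pushforward measure $\mu_\varphi=\Phi_\ast m$ and your treatment of $1\le p<2$ follow the same overall strategy as the paper: combine Theorem~\ref{thm:bergmanemb} with a Carleson-type analysis of $\mu_\varphi$ relative to the weighted Bergman space $D_{2/p-1,\operatorname{i}}$, and your exponent count is correct (your $\gamma=d/2+p/4-1>0$ iff $d>2-p/2$, consistent with the paper's integral $\int_0^{\sigma_0}\sigma^{(2p-3)/(2-p)+1/2}\,d\sigma<\infty$). However, the step you yourself flag as delicate is where the argument, as written, fails: the pointwise density comparison $\mu_\varphi|_{S_k}\lesssim 2^{-k\delta}\,\nu_p|_{S_k}$ is false. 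For $d=2$ the real Jacobian of $(\theta_1,\theta_2)\mapsto\Phi(e^{i\theta_1},e^{i\theta_2})$ is $|c_{p_1}c_{p_2}|\sin(\theta_2-\theta_1)$, which vanishes on the diagonal; the density of $\mu_\varphi$ blows up like the inverse square root of the distance to the resulting fold caustic, so it cannot be dominated pointwise by the smooth density of $\nu_p$ on $S_k$. What survives the caustic are estimates of $\mu_\varphi$ on boxes or hyperbolic balls of radius comparable to $\sigma-1/2$, and that is exactly how the paper proceeds: it invokes Luecking's characterization of $p/2$--Carleson measures for weighted Bergman spaces (Lemma~\ref{lem:luecking}), which requires only $\widetilde{\mu}\big(B_{\operatorname{H}}(s,r)\big)\ll\sigma^{3/2}$, an estimate obtained by the same Taylor expansion you describe. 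Your dyadic H\"older scheme can be repaired by replacing the density comparison with a ball-averaged one (using the sub-mean-value property of $|f|^2$), but as stated it has a hole.

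The more serious gap is the case $p\ge 2$. You invoke complex interpolation of the scale $\mathcal H^p$ between $\mathcal H^2$ and $\mathcal H^\infty$, but it is not known that $\mathcal H^p$ is an interpolation space for this couple: $\mathcal H^p\cong H^p(\TT^\infty)$, and the identification $[H^2(\TT^\infty),H^\infty(\TT^\infty)]_\theta=H^p(\TT^\infty)$ is not available --- the hard inclusion requires analytic decompositions (outer functions with prescribed modulus), which is precisely what breaks down on the infinite polytorus. Nor can you retreat to finitely many variables, since $f$ depends on all of them even though $\varphi$ involves only $d$ primes. The paper avoids interpolation entirely: for $2k<p<2k+2$ it shows that $\mathcal C_\varphi$ maps $\mathcal H^{2k}$ into $\mathcal H^p$ via $p/q$--Carleson measures (Lemmas~\ref{lem:pq1} and~\ref{lem:pq2}, usable because the local embedding \eqref{eq:localemb} holds for the even integer $2k$) combined with the box estimate $\mu_\varphi\big(Q(\tau,\veps)\big)\ll\veps^{(d+1)/2}$, and then uses the inclusion $\mathcal H^p\subset\mathcal H^{2k}$; the leftover range $p\in(3,4)$ with $d=2$ is reduced to $p\in(3/2,2)$ through the identity $\|f\circ\varphi\|_{\mathcal H^{2p}}^{2p}=\|f^2\circ\varphi\|_{\mathcal H^p}^p$. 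You need some such argument to cover $p>2$ away from the even integers.
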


Observe that the case $d=1$ corresponds to the simple symbol discussed above. It should also be mentioned that very few non-trivial composition operators of characteristic $0$ on $\mathcal H^p$ are known when $p$ is not an even integer, and none involving two or more prime numbers. Moreover, it is possible to generate more examples from our method and results in \cite{BB15}.

We finally show that if $\varphi(s)=3/2-2^{-s}$ generates a bounded composition operator on $\mathcal{H}^1$, then Nehari's theorem holds for the multiplicative Hilbert matrix introduced in \cite{BPSSV14}. We apply Theorem~\ref{thm:bergmanemb} to demonstrate that the associated functional is bounded on $\mathcal{H}^p$ for $p\in(1,\infty)$.

\subsection*{Organization} This paper is divided into six sections. \begin{itemize}
	\item Section~\ref{sec:bohr} contains an exposition of our observation that the local embedding problem mentioned above is equivalent to boundedness of certain composition operators for $\mathcal{H}^p$ (Theorem~\ref{thm:Hp}) and $\mathcal{D}_\alpha$ (Theorem~\ref{thm:Dequiv}), in addition to the proof of Theorem \ref{thm:bergmanemb}. 
	\item In Section~\ref{sec:carleson}, we collect some results regarding Carleson measures in the half-plane and on the polydisc, which will be needed in the following sections.
	\item Section~\ref{sec:poly} is devoted to a study of composition operators from $\mathcal{D}_\alpha$ to $\mathcal{D}_\beta$ generated by polynomial symbols. The main result of this section, Theorem~\ref{thm:dirichletpolynomial}, demonstrates that the boundedness of $\mathcal{C}_\varphi \colon \mathcal{D}_\alpha \to \mathcal{D}_\beta$ depends strongly on the complex dimension and degree of the polynomial symbol.
	\item In Section~\ref{sec:hp}, we discuss composition operators with linear symbols on $\mathcal{H}^p$. The proof of Theorem~\ref{thm:linearHp} can be found here.
	\item The final section contains some connections from the results obtained in this paper to the validity of Nehari's theorem for the multiplicative Hilbert matrix.
\end{itemize}

\subsection*{Notation} We will use the notation $f(x)\ll g(x)$ when there is some constant $C>0$ such that $|f(x)|\leq C|g(x)|$ for all (appropriate) $x$. If both $f(x)\ll g(x)$ and $g(x)\ll f(x)$ hold, we will write $f(x)\asymp g(x)$. As usual, $\{p_j\}_{j\geq1}$ will denote the increasing sequence of prime numbers.

\section{Composition operators and the embedding problem} \label{sec:bohr} 
\subsection{Hardy spaces}
As mentioned in the introduction, functions in $\mathcal{H}^p$ are holomorphic in the half-plane $\mathbb{C}_{1/2}$. It is therefore interesting to investigate how they behave on the line $1/2+it$. In this context, the most important question is the embedding problem (see \cite[Sec.~3]{SS09}), which can be formulated as follows. Is there a constant $C_p$ such that 
\begin{equation}
	\label{eq:localemb} \sup_{\tau \in \mathbb{R}}\int_{\tau}^{\tau+1} |P(1/2+it)|^p\,dt \leq C_p \|P\|_{\mathcal{H}^p}^p 
\end{equation}
for every Dirichlet polynomial $P$? It follows from an inequality of Montgomery and Vaughan (see \cite[pp.~140--141]{Montgomery94}) that \eqref{eq:localemb} holds for $p=2$, and hence for every even integer $p$, but its validity for other values remains open. Now, from \eqref{eq:besicovitch} it is clear that the $\mathcal{H}^p$ norm is invariant under vertical translations, so it is enough to check \eqref{eq:localemb} for a fixed $\tau$, say $\tau=0$.

A typical (see e.g.~\cite{Bayart02,GH99}) application of the local embedding is to deduce that if $\varphi$ is in $\mathcal{G}$ with $\charac(\varphi)=0$, then the composition operator $\mathcal{C}_\varphi$ is bounded on $\mathcal{H}^p$. This is usually done through the following equivalent formulation of \eqref{eq:localemb}. 

The conformally invariant Hardy space in the half-plane $\mathbb{C}_{1/2}$, which we denote $H^p_{\operatorname{i}}$, consists of those functions $f$ such that $f \circ \mathcal{T} \in H^p(\mathbb{T})$, where $\mathcal{T}$ is the following mapping from $\mathbb{D}$ to $\mathbb{C}_{1/2}$,
\[\mathcal{T}(z) = \frac{1}{2}+\frac{1-z}{1+z}.\]
The mapping $\mathcal{T}$ appeared in the transference principle of \cite{QS15}, where it was used to transfer certain results about composition operators on $H^2(\mathbb{T})$ to results about composition operators on $\mathcal{H}^2$. Now, the norm of $H^p_{\operatorname{i}}$ can be computed as 
\begin{equation}
	\label{eq:Hpi} \|f\|_{H^p_{\operatorname{i}}}^p := \|f\circ \mathcal{T}\|_{H^p(\mathbb{T})}^p = \frac{1}{2\pi} \int_{-\pi}^{\pi} |f(1/2+i\tan(\theta/2))|^p \, d\theta = \frac{1}{\pi}\int_{\mathbb{R}} |f(1/2+it)|^p \,\frac{dt}{1+t^2}. 
\end{equation}
The inequality \eqref{eq:localemb} is equivalent to $\|P\|_{H^p_{\operatorname{i}}} \leq C_p^\prime\|P\|_{\mathcal{H}^p}$, since evidently
\[\int_0^1 |P(1/2+it)|^p\, dt \ll \|P\|_{H^p_{\operatorname{i}}}^p \ll \sup_{\tau \in \mathbb{R}} \int_{\tau}^{\tau+1} |P(1/2+it)|^p\,dt.\]

Our observation is that not only does the embedding \eqref{eq:localemb} imply a sufficient condition for boundedness of certain composition operators, it is in fact equivalent to boundedness of all composition operators of this type.
\begin{thm}
	\label{thm:Hp} Fix $1\leq p < \infty$. The following are equivalent. 
	\begin{itemize}
		\item[(a)] The local embedding \eqref{eq:localemb} holds for $p$. 
		\item[(b)] For every $\varphi \in \mathcal{G}$ with $\charac(\varphi)=0$, the composition operator $\mathcal{C}_\varphi$ acts boundedly on $\mathcal{H}^p$. 
		\item[(c)] Let $\psi(s)=\mathcal{T}(2^{-s})$. The composition operator $\mathcal{C}_\psi$ acts boundedly on $\mathcal{H}^p$. 
	\end{itemize}
\end{thm}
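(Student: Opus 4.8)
The plan is to prove the cycle (a) $\Rightarrow$ (b) $\Rightarrow$ (c) $\Rightarrow$ (a). The implication (b) $\Rightarrow$ (c) is immediate: for $\mre(s)>0$ we have $|2^{-s}|<1$, hence $2^{-s}\in\DD$ and $\psi(s)=\mathcal{T}(2^{-s})\in\CC_{1/2}$; moreover, expanding $\mathcal{T}(w)=\tfrac32-2\sum_{k\ge1}(-1)^{k-1}w^k$ with $w=2^{-s}$ exhibits $\psi$ as a Dirichlet series supported on the powers of $2$ with no linear term, so $\psi\in\mathcal G$ and $\charac(\psi)=0$. Thus $\psi$ is one of the symbols to which (b) applies, and (b) forces $\mathcal{C}_\psi$ to be bounded.

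The heart of the matter is (c) $\Rightarrow$ (a), which I would derive from the exact identity $\|\mathcal{C}_\psi f\|_{\mathcal H^p}=\|f\|_{H^p_{\operatorname{i}}}$, valid for every Dirichlet polynomial $f$. The key point is that $\psi$ is periodic: since $2^{-(s+2\pi i/\log 2)}=2^{-s}$, we have $\psi(s+2\pi i/\log 2)=\psi(s)$, so $t\mapsto f(\psi(it))=f(\mathcal{T}(2^{-it}))$ is bounded and periodic with period $2\pi/\log 2$. Consequently the Besicovitch mean in \eqref{eq:besicovitch} collapses to the average over a single period, and the substitution $e^{i\theta}=2^{-it}$ yields
\[\|\mathcal{C}_\psi f\|_{\mathcal H^p}^p=\frac1{2\pi}\int_{-\pi}^{\pi}\bigl|f\bigl(\mathcal{T}(e^{i\theta})\bigr)\bigr|^p\,d\theta=\|f\circ\mathcal{T}\|_{H^p(\TT)}^p=\|f\|_{H^p_{\operatorname{i}}}^p,\]
the right-hand equalities being the definition \eqref{eq:Hpi} of the conformally invariant norm. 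The only value where $f\circ\mathcal{T}$ is discontinuous is $2^{-it}=-1$, and this is harmless because a Dirichlet polynomial stays bounded on the critical line. Granting (c), this identity gives $\|f\|_{H^p_{\operatorname{i}}}\le C\|f\|_{\mathcal H^p}$ for all Dirichlet polynomials, which is exactly the equivalent form of (a). Read in reverse it also gives (a) $\Rightarrow$ (c), so (a) and (c) are equivalent through a single computation.

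For (a) $\Rightarrow$ (b) I would run the standard argument that deduces composition-operator bounds from the embedding. Let $\varphi\in\mathcal G$ with $\charac(\varphi)=0$, so $\varphi(\CC_0)\subset\CC_{1/2}$. Via the Bohr lift and the ergodicity of the Kronecker flow on $\TT^\infty$, the Besicovitch norm of $f\circ\varphi$ becomes an integral over the polytorus, $\|f\circ\varphi\|_{\mathcal H^p}^p=\int_{\TT^\infty}|f(\Phi(\chi))|^p\,dm(\chi)=\int_{\overline{\CC_{1/2}}}|f|^p\,d\mu_\varphi$, where $\Phi$ is the boundary function of the lifted symbol and $\mu_\varphi=\Phi_\ast m$. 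What remains is to show that $\mu_\varphi$ is a Carleson measure for the embedding, $\int|f|^p\,d\mu_\varphi\le C\|f\|_{\mathcal H^p}^p$. This is exactly where (a) is used: combining $\mathcal H^p\hookrightarrow H^p_{\operatorname{i}}$ with the conformal identification $H^p_{\operatorname{i}}\cong H^p(\TT)$ through $\mathcal{T}$, the desired bound reduces to a classical Carleson-measure estimate on $\DD$ for $(\mathcal{T}^{-1})_\ast\mu_\varphi$.

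I expect the main obstacle to lie in this last step. The pushforward of a probability measure under an arbitrary map into $\CC_{1/2}$ need not be Carleson, so one cannot merely invoke boundedness; the point is to exploit that $\Phi$ is the boundary value of a genuine $\mathcal G$-symbol (a subordination phenomenon in the spirit of Littlewood's principle) and to carry this verification through in the infinite-dimensional polytorus setting. By contrast, the genuinely new content of the theorem is the clean reverse implication (c) $\Rightarrow$ (a) of the second paragraph, whose only delicate feature is the harmless boundary singularity at $2^{-it}=-1$.
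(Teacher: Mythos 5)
Your implications (b) $\Rightarrow$ (c) and (c) $\Rightarrow$ (a) are correct and essentially the paper's own: the exact identity $\|\mathcal{C}_\psi f\|_{\mathcal H^p}=\|f\circ\mathcal T\|_{H^p(\mathbb T)}=\|f\|_{H^p_{\operatorname{i}}}$ is precisely what the paper uses, obtained there from $\mathcal B\psi(z)=\mathcal T(z_1)$ and the Bohr-lift isometry rather than from your (equivalent) observation that $t\mapsto f(\psi(it))$ is periodic with period $2\pi/\log 2$.

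The problem is (a) $\Rightarrow$ (b), where you have located the difficulty without resolving it. You reduce the claim to showing that the pushforward $\mu_\varphi=\Phi_\ast\nu$ is a Carleson-type measure for $H^p_{\operatorname{i}}$, and then state that the point is to ``exploit that $\Phi$ is the boundary value of a genuine $\mathcal G$-symbol\dots and to carry this verification through in the infinite-dimensional polytorus setting.'' That is exactly the step that requires a proof, and nothing in your text supplies it; for a general measurable $\Phi\colon\mathbb T^\infty\to\overline{\CC_{1/2}}$ the assertion is false, and it is not clear how to run a boundary Carleson-measure argument for an infinite-variable symbol whose lift need not even have tractable boundary values. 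The paper sidesteps Carleson measures entirely. It sets $G=\mathcal B(P\circ\varphi)=P\circ(\mathcal B\varphi)$, uses rotation invariance of Haar measure and Fubini to write $\|G\|_{H^p(\mathbb T^\infty)}^p=\int_{\mathbb T^\infty}\int_{\mathbb T}|G_w(z)|^p\,dm(w)\,d\nu(z)$ with $G_w(z)=G(wz_1,wz_2,\dots)$, and notes that for each fixed $z\in\mathbb T^\infty$ the slice $w\mapsto(\mathcal B\varphi)_w(z)$ is a \emph{one-variable} holomorphic map of $\DD$ into $\CC_{1/2}$ with value $c_1$ at $w=0$. One-variable Littlewood subordination then bounds the inner integral by $\frac{1+|\mathcal T^{-1}(c_1)|}{1-|\mathcal T^{-1}(c_1)|}\,\|P\|_{H^p_{\operatorname{i}}}^p$, a constant independent of $z$, and hypothesis (a) converts $\|P\|_{H^p_{\operatorname{i}}}$ into $\|P\|_{\mathcal H^p}$. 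To complete your argument you would need either to reproduce this slice reduction or to give an actual proof of the Carleson property of $\mu_\varphi$; the latter does not follow by soft reasoning.
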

As explained in \cite{Bayart02}, the proof of (a) $\implies$ (b) can be adapted from the proof given for $p=2$ in \cite{GH99}. This argument relies on approximating the Besicovitch norm \eqref{eq:besicovitch} by taking a limit in a family of conformal mappings. A simpler proof of this implication, based on a trick from \cite{BB14}, is included below. 

To facilitate this, let us recall the Bohr lift. Every positive integer $n$ can be written uniquely as a product of prime numbers,
\[n = \prod_{j=1}^\infty p_j^{\kappa_j}.\]
This factorization associates the finite multi-index $\kappa(n)=(\kappa_1,\,\kappa_2,\,\ldots\,)$ to $n$. Consider a Dirichlet series $f(s)=\sum_{n\geq 1}a_n n^{-s}$. Its Bohr lift $\mathcal{B}f$ is the power series
\[\mathcal{B}f(z) = \sum_{n=1}^\infty a_n z^{\kappa(n)}.\]
It is well-known (see \cite{Bayart02,QQ13}) that the Bohr lift defines an isometric isomorphism between $\mathcal{H}^p$ and the Hardy space of the countably infinite polytorus, $H^p(\mathbb{T}^\infty).$ The polytorus $\mathbb{T}^\infty$ is a compact abelian group, which we endow with its normalized Haar measure $\nu$, so that 
\[\|f\|_{\mathcal{H}^p}^p = \|\mathcal{B}f\|_{H^p(\mathbb{T}^\infty)}^p := \left(\int_{\mathbb{T}^\infty} |\mathcal{B}f(z)|^p\,d\nu(z)\right)^\frac{1}{p}. \]
It is important to note that the Haar measure $\nu=\nu_0$ of the polytorus $\mathbb{T}^\infty$ is simply the product of the normalized Lebesgue measure on $\mathbb{T}$, denoted $m=m_0$, in each variable. The subscript is included to indicate the connection to $\mathcal{D}_0=\mathcal{H}^2$.
\begin{proof}
	[Proof of Theorem~\ref{thm:Hp}] For (a) $\implies$ (b), we first suppose that $\Phi$ is a holomorphic function mapping $\mathbb{D}$ to $\mathbb{C}_{1/2}$. Using Littlewood's subordination principle (see \cite[Ch.~11]{Zhu}), we find that 
	\begin{equation}
		\label{eq:LWS} \|f\circ \Phi\|_{H^p(\mathbb{T})}^p \leq \frac{1+|\mathcal{T}^{-1}(\Phi(0))|}{1-|\mathcal{T}^{-1} (\Phi(0))|}\|f\|_{H^p_{\operatorname{i}}}^p,
	\end{equation}
for $f\in H^p_{\operatorname{i}}$. For $G \in H^p(\mathbb{T}^\infty)$ and $w \in \mathbb{C}$, set $G_w(z) = G(wz_1,\,wz_2,\,\ldots\,)$. By Fubini's theorem, 
	\[\|G\|_{H^p(\mathbb{T}^\infty)}^p = \int_{\mathbb{T}^\infty} \int_{\mathbb{T}} |G_w(z)|^p\,d m(w) d\nu(z).\]
	Let $P$ be a Dirichlet polynomial and assume that $\varphi \in \mathcal{G}$ with $\charac(\varphi)=0$. The latter assumption implies that $\mathcal{B}(P\circ \varphi) = P \circ (\mathcal{B}\varphi)$. Thus, by setting $G = \mathcal{B}(P\circ \varphi)$, we obtain
	\[\|P \circ \varphi\|_{\mathcal{H}^p}^p = \int_{\mathbb{T}^\infty} \int_{\mathbb{T}} |P \circ (\mathcal{B}\varphi)_w(z)|^p\, dm(w) d\nu(z).\]
	Fixing for a moment $z \in \mathbb{T}^\infty$, we notice that $\Phi(w) = (\mathcal{B}\varphi)_w(z)$ maps $\mathbb{D}$ to $\mathbb{C}_{1/2}$ with $\Phi(0)= c_1$. Considering therefore $P$ a member of $H^p_{\operatorname{i}}$, we apply \eqref{eq:LWS} and conclude that
	\[\|P\circ\varphi\|_{\mathcal{H}^p}^p \leq \int_{\mathbb{T}^\infty} \left(\frac{1+|\mathcal{T}^{-1}(c_1)|}{1-|\mathcal{T}^{-1}(c_1)|}\|P\|_{H^p_{\operatorname{i}}}^p\right)\, d\nu(z) = \frac{1+|\mathcal{T}^{-1}(c_1)|}{1-|\mathcal{T}^{-1}(c_1)|}\|P\|_{H^p_{\operatorname{i}}}^p,\]
	seeing as the constant in this instantiation of Littlewood's subordination principle does not involve $z$.
	
	The implication (b) $\implies$ (c) is obvious, seeing as it is easy to verify that $\psi \in \mathcal{G}$. To prove that (c) $\implies$ (a), assume that $\mathcal{C}_\psi$ acts boundedly on $\mathcal{H}^p$, say that
	\[\|\mathcal{C}_\psi P\|_{\mathcal{H}^p} \leq C_p \|P\|_{\mathcal{H}^p}\]
	holds for every Dirichlet polynomial $P$. Arguing as above, we find that $\mathcal{B}(P\circ \psi) = P \circ (\mathcal{B}\psi)$
	and that, in this case, $\mathcal{B}\psi(z)=\mathcal{T}(z_1)$. In particular, using the Bohr lift, this means that 
	\[\|\mathcal{C}_\psi P\|_{\mathcal{H}^p}  = \|P \circ \mathcal{T}\|_{H^p(\mathbb{T})},\]
	so we are done by \eqref{eq:Hpi}.
\end{proof}

\subsection{Bergman spaces} Let us now explain how to do the same for the Bergman--type spaces $\mathcal{D}_\alpha$. Let $\alpha,\beta>0$, and consider the following probability measures on $\mathbb{D}$. 
\begin{align}
	dm_\alpha(z) &= \frac{1}{\Gamma(\alpha)}\left(\log{\frac{1}{|z|^2}}\right)^{\alpha-1} \,dm_1(z), \label{eq:malpha}\\
	d\widetilde{m}_\beta(z) &= \beta\left(1-|z|^2\right)^{\beta-1}\,dm_1(z). \label{eq:mbeta} 
\end{align}
Here $m_1$ (which is the only case where $m=\widetilde{m}$) is taken to be the standard Lebesgue measure on $\mathbb{C}$, normalized so that $m_1(\mathbb{D})=1$. For $\alpha>0$, the Bergman space $D_\alpha(\mathbb{D})$ can be defined as the $L^2$-closure of polynomials with respect to either measure, yielding equivalent norms. We will for simplicity use the measure \eqref{eq:mbeta} in most cases.

However, in an infinite number of variables, the norms are no longer equivalent. We use \eqref{eq:malpha} to compute the norm of $\mathcal{D}_\alpha$ as an integral over $\mathbb{D}^\infty$ to ensure that \eqref{eq:Dalpha} is satisfied. Therefore, we define $d\nu_\alpha(z) = dm_\alpha(z_1)\times dm_\alpha(z_2)\times \cdots$. It is straightforward to verify that
\[\|f\|_{\mathcal{D}_\alpha}^2 = \int_{\mathbb{D}^\infty} |\mathcal{B}f(z)|^2\,d\nu_\alpha(z).\]
Set $S_\tau = [1/2,1]\times[\tau,\tau+1]$. For the Bergman spaces $\mathcal{D}_\alpha$, the local embedding problem takes on the following form: Given $\alpha>0$, what is the smallest $\beta>0$ such that 
\begin{equation}
	\label{eq:blocal} \sup_{\tau \in \mathbb{R}}\int_{S_\tau} |P(s)|^2\,\left(\sigma-\frac{1}{2}\right)^{\beta-1}\, dm_1(s) \leq C_{\alpha,\beta} \|P\|_{\mathcal{D}_\alpha}^2
\end{equation}
for every Dirichlet polynomial $P$? Again, it is clear that the norm of $\mathcal{D}_\alpha$ is invariant under vertical translations, so arguing as above, we find that \eqref{eq:blocal} is equivalent to $\|P\|_{D_{\beta,\operatorname{i}}} \leq C^\prime_{\alpha,\beta} \|P\|_{\mathcal{D}_\alpha}^2$, setting 
\begin{equation}
	\label{eq:DBi} \|f\|_{D_{\beta,\operatorname{i}}}^2 := \|f\circ\mathcal{T}\|_{D_\beta(\mathbb{D})}^2 = 4^\beta \beta \int_{\mathbb{C}_{1/2}} |f(s)|^2 \,\left(\sigma-\frac{1}{2}\right)^{\beta-1}\, \frac{dm_1(s)}{|s+1/2|^{2\beta+2}}, 
\end{equation}
since any $f$ in $\mathcal D_\alpha$ is uniformly bounded in $\mathbb{C}_{1}$ by its $\mathcal D_\alpha$ norm. For the next result, (a) $\implies$ (b) is part of the main result in \cite{BB14}. The other steps are identical to the proof of Theorem~\ref{thm:Hp} in view of the discussion above.
\begin{thm}
	\label{thm:Dequiv} Fix $\alpha,\beta>0$. The following are equivalent. 
	\begin{itemize}
		\item[(a)] The local embedding \eqref{eq:blocal} holds for $\alpha$ and $\beta$. 
		\item[(b)] For every $\varphi \in \mathcal{G}$ with $\charac(\varphi)=0$, the composition operator $\mathcal{C}_\varphi \colon \mathcal{D}_\alpha \to \mathcal{D}_\beta$ is bounded. 
		\item[(c)] Let $\psi(s)=\mathcal{T}(2^{-s})$. The composition operator $\mathcal{C}_\psi$ maps $\mathcal{D}_\alpha$ boundedly into $\mathcal{D}_\beta$. 
	\end{itemize}
\end{thm}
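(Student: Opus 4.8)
The plan is to establish the cyclic chain (a) $\implies$ (b) $\implies$ (c) $\implies$ (a), mirroring the proof of Theorem~\ref{thm:Hp} line for line and relying on two ingredients already in place: the Bohr-lift identity $\|f\|_{\mathcal{D}_\beta}^2 = \int_{\DD^\infty} |\mathcal{B}f(z)|^2\,d\nu_\beta(z)$, and the reformulation of the local embedding \eqref{eq:blocal} as the norm estimate $\|P\|_{D_{\beta,\operatorname{i}}} \leq C'_{\alpha,\beta}\|P\|_{\mathcal{D}_\alpha}$ recorded in the discussion preceding the statement. For the implication (a) $\implies$ (b) I would simply invoke the main result of \cite{BB14}, where the validity of \eqref{eq:blocal} is shown to be exactly what is needed to push a holomorphic self-map of $\CC_{1/2}$ through the Bohr lift (a Littlewood-subordination-plus-Fubini argument as in Theorem~\ref{thm:Hp} is available as an alternative, but the subordination constant for the weighted Bergman space $D_\beta(\DD)$ is more delicate, so citing \cite{BB14} is cleanest). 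The implication (b) $\implies$ (c) is immediate: one checks directly that $\psi(s)=\mathcal{T}(2^{-s})$ lies in $\mathcal{G}$ with $\charac(\psi)=0$, so it is one of the symbols already covered by (b).

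The substance is in (c) $\implies$ (a). Starting from a bounded $\mathcal{C}_\psi\colon\mathcal{D}_\alpha\to\mathcal{D}_\beta$, say with $\|\mathcal{C}_\psi P\|_{\mathcal{D}_\beta}\leq C\|P\|_{\mathcal{D}_\alpha}$ for all Dirichlet polynomials $P$, I would compute the action of $\mathcal{C}_\psi$ through the Bohr lift. Since $\charac(\psi)=0$ we have $\mathcal{B}(P\circ\psi)=P\circ(\mathcal{B}\psi)$, and the decisive structural feature is that $\mathcal{B}\psi(z)=\mathcal{T}(z_1)$ depends on the single variable $z_1$. Hence the integrand $|P(\mathcal{T}(z_1))|^2$ is a function of $z_1$ alone, and because $\nu_\beta$ is a product of probability measures, all remaining coordinates $z_2,z_3,\ldots$ integrate out, collapsing the $\DD^\infty$-integral to a single integral over $\DD$:
\[\|\mathcal{C}_\psi P\|_{\mathcal{D}_\beta}^2 = \int_{\DD^\infty} |P(\mathcal{T}(z_1))|^2\,d\nu_\beta(z) = \int_{\DD} |P(\mathcal{T}(z_1))|^2\,dm_\beta(z_1) = \|P\circ\mathcal{T}\|_{D_\beta(\DD)}^2 \asymp \|P\|_{D_{\beta,\operatorname{i}}}^2.\]
The boundedness hypothesis (c) then yields $\|P\|_{D_{\beta,\operatorname{i}}}\leq C'\|P\|_{\mathcal{D}_\alpha}$, which is precisely the reformulation of \eqref{eq:blocal}, so (a) follows.

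The one genuine subtlety, and the step I would watch most carefully, is the bookkeeping of measures in the final $\asymp$ above. The product measure $\nu_\beta$ is built from $m_\beta$ (the $(\log 1/|z|^2)^{\beta-1}$ weight) in each coordinate, chosen precisely so that \eqref{eq:Dalpha} holds, whereas the half-plane weight appearing in the definition \eqref{eq:DBi} of $D_{\beta,\operatorname{i}}$ arises from transporting $\widetilde m_\beta$ (the $(1-|z|^2)^{\beta-1}$ weight) through $\mathcal{T}$. In infinitely many variables these two choices are genuinely \emph{inequivalent} — this is exactly the phenomenon the authors flag — so one may not interchange them globally. What rescues the argument is that after the collapse only a \emph{single} variable survives, and in one variable $m_\beta$ and $\widetilde m_\beta$ induce equivalent norms on $D_\beta(\DD)$; thus the identification holds with implied constants depending only on $\beta$, which suffices since \eqref{eq:blocal} is an inequality up to constants. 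Apart from this measure-matching point, every step is routine and identical to the $\mathcal{H}^p$ case.
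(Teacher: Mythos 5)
Your proposal is correct and follows essentially the same route as the paper: the authors likewise dispatch (a) $\implies$ (b) by citing the main result of \cite{BB14} and note that the remaining implications are identical to the proof of Theorem~\ref{thm:Hp}, with (c) $\implies$ (a) resting on the collapse of the $\DD^\infty$-integral to the single variable $z_1$ via $\mathcal{B}\psi(z)=\mathcal{T}(z_1)$. Your careful remark on the one-variable equivalence of $m_\beta$ and $\widetilde m_\beta$ is exactly the point the paper covers by defining $D_\beta(\DD)$ with respect to either measure.
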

It was shown in \cite{Olsen11} that $\beta = 2^\alpha-1$ is the optimal exponent in \eqref{eq:blocal}. We will touch upon the reason behind this value in the next section, see in particular \eqref{eq:wilsonfactor}. From this optimality, we obtain at once the following result, clarifying the optimal $\beta$ in the main result of \cite{BB14}, which states that if $\varphi\in\mathcal{G}$ with $\charac(\varphi)=0$, then $\mathcal{C}_\varphi$ maps $\mathcal{D}_\alpha$ boundedly into $\mathcal{D}_\beta$ if $\beta\geq 2^\alpha-1$.
\begin{cor}
	\label{thm:optimality} Let $\alpha\geq 0$. There is $\varphi \in \mathcal{G}$ with $\charac(\varphi)=0$ such that $\mathcal{C}_\varphi \colon \mathcal{D}_\alpha \to \mathcal{D}_\beta$ is bounded if and only if $\beta \geq 2^{\alpha}-1$. 
\end{cor}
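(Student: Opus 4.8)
The plan is to deduce the statement directly from the equivalences in Theorem~\ref{thm:Dequiv} together with the optimality of the exponent $2^\alpha-1$ established in \cite{Olsen11}, taking as the single witness the symbol $\psi(s)=\mathcal{T}(2^{-s})$. First I would record that $\psi$ indeed belongs to $\mathcal{G}$ with $\charac(\psi)=0$, which is the same verification already invoked in the proof of Theorem~\ref{thm:Hp}; this makes $\psi$ an admissible candidate for the existential claim, so it suffices to show that $\mathcal{C}_\psi\colon\mathcal{D}_\alpha\to\mathcal{D}_\beta$ is bounded precisely when $\beta\geq 2^\alpha-1$.

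For $\alpha>0$ the argument is a two-step chain of equivalences. Since $\alpha>0$ forces $2^\alpha-1>0$, every $\beta\geq 2^\alpha-1$ satisfies $\beta>0$, so Theorem~\ref{thm:Dequiv} applies and shows that boundedness of $\mathcal{C}_\psi$ (condition (c)) is equivalent to the local embedding \eqref{eq:blocal} (condition (a)). By \cite{Olsen11}, \eqref{eq:blocal} holds if and only if $\beta\geq 2^\alpha-1$. Reading the chain in both directions yields the conclusion: if $\beta\geq 2^\alpha-1$ then (a) holds, hence (c) holds and $\mathcal{C}_\psi$ is bounded; and if $0<\beta<2^\alpha-1$ then (a) fails, hence (c) fails and $\mathcal{C}_\psi$ is unbounded. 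To reach the remaining values $\beta\leq 0$, which fall outside the hypotheses of Theorem~\ref{thm:Dequiv}, I would invoke monotonicity: since $d(n)\geq 1$, the definition \eqref{eq:Dalpha} gives a contractive inclusion $\mathcal{D}_\beta\hookrightarrow\mathcal{D}_{\beta'}$ whenever $\beta\leq\beta'$, so boundedness into a smaller target forces boundedness into every larger one. Contraposing against any fixed $\beta'\in(0,2^\alpha-1)$ then shows $\mathcal{C}_\psi$ is unbounded into $\mathcal{D}_\beta$ for every $\beta<2^\alpha-1$ as well.

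The case $\alpha=0$ is degenerate and must be treated separately, since there $2^\alpha-1=0$, the space $\mathcal{D}_0=\mathcal{H}^2$ is not of Bergman type, and Theorem~\ref{thm:Dequiv} is stated only for $\alpha>0$. Here I would argue directly: any $\varphi\in\mathcal{G}$ with $\charac(\varphi)=0$ acts boundedly on $\mathcal{H}^2$, because the embedding \eqref{eq:localemb} holds for $p=2$ by Montgomery--Vaughan, and the contractive inclusion $\mathcal{D}_0\hookrightarrow\mathcal{D}_\beta$ for $\beta\geq 0$ then gives boundedness of $\mathcal{C}_\varphi\colon\mathcal{D}_0\to\mathcal{D}_\beta$ for all $\beta\geq 0=2^0-1$, which is the whole admissible range.

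I do not expect a genuine obstacle, since the result is a formal corollary of two already-available theorems; the phrase ``at once'' in the statement reflects this. The only points requiring care are that the endpoint $\beta=2^\alpha-1$ be included, which rests on the embedding of \cite{Olsen11} holding at equality and not merely in the open range, and that the existential quantifier be respected --- one need only exhibit a single symbol realizing the threshold, and $\psi$ serves this role for every $\alpha>0$ precisely because it is the symbol appearing in condition (c) of Theorem~\ref{thm:Dequiv}.
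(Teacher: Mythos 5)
Your proposal is correct and follows essentially the same route as the paper: the corollary is deduced from the equivalence (a)$\iff$(c) of Theorem~\ref{thm:Dequiv} with the single witness $\psi(s)=\mathcal{T}(2^{-s})$, combined with the optimality of the exponent $2^\alpha-1$ in \eqref{eq:blocal} from \cite{Olsen11}. Your extra care with the edge cases $\alpha=0$ and $\beta\le 0$ (via the contractive inclusions $\mathcal{D}_\beta\hookrightarrow\mathcal{D}_{\beta'}$) only fills in details the paper leaves implicit in its ``we obtain at once'' remark.
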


\subsection{Embedding of $\mathcal H^p$ into $D_{\beta,\operatorname{i}}$}
Even if one is unable to prove the embedding inequality \eqref{eq:localemb} for $1\leq p<2$, it is natural to ask whether it is possible to embed $\mathcal H^p$ into some Bergman space $D_{\beta,\operatorname{i}}$. For the Hardy spaces of the unit disc, this type of result goes back to the function theoretic version of the isoperimetric inequality due to Carleman, which asserts that
\begin{equation} \label{eq:carleman}
	\|f\|_{D_1(\DD)}\leq \|f\|_{H^1(\DD)}.
\end{equation}
Iterating the inequality (its contractivity is crucial) and using the Bohr lift, Helson \cite{Helson06} found that $\|f\|_{\mathcal D_1}\leq \|f\|_{\mathcal H^1}.$
Combining Helson's inequality with the results from \cite{Olsen11} discussed above, one finds that $\mathcal{H}^1$ is embedded in $D_{1,\operatorname{i}}$, thereby reclaiming \eqref{eq:carleman} in the context of Hardy spaces of Dirichlet series and weighted Bergman spaces in $\mathbb{C}_{1/2}$.

If we seek to extend Helson's inequality to $1<p<2$, we are required to use the measure \eqref{eq:malpha} when defining the spaces $D_\alpha(\mathbb{D})$, to ensure that  we get $\mathcal{D}_\alpha$ after the iterative procedure. By a standard interpolation argument between \eqref{eq:carleman} and $H^2(\mathbb{D})$, one find that for $p\in(1,2)$, 
\begin{equation}\label{eq:interpolation}
\|f\|_{D_{\frac 2p-1}(\DD)}\leq C_p \|f\|_{H^p(\DD)}.
\end{equation}
Nevertheless, the constant $C_p$ arising from interpolation between Hardy spaces is strictly bigger than $1$ (see \cite{BHS15}). Without contractivity, we cannot argue as Helson, starting from \eqref{eq:interpolation}, to prove that $\mathcal H^p$ embeds into $\mathcal D_{2/p-1}$. It turns out that this embedding is false, since it can be proved (see \cite{BHS15} or the argument at the end of the proof of Theorem \ref{thm:bergmanemb}) that if $\mathcal H^p$ embeds into $\mathcal D_\alpha$, then $\alpha\geq 1-\log p/\log 2$ which is stricly bigger than $2/p-1$ when $p\in(1,2)$.

On the other hand, such an embedding is not known to exist, unless $p\in\{1,2\}$. If we could prove that $\mathcal H^p$ embeds into $\mathcal{D}_\alpha$, with $\alpha=1-\log p/\log 2$, then the embedding \eqref{eq:blocal}, which is valid with $\beta=2^\alpha-1$, would imply that
\begin{equation} \label{eq:Hbin}
	\|f\|_{D_{\frac{2}{p}-1,\operatorname{i}}} \ll \|f\|_{\mathcal{H}^p},
\end{equation}
again reclaiming \eqref{eq:interpolation} for Hardy spaces of Dirichlet series and weighted Bergman spaces in $\mathbb{C}_{1/2}$. Similarly, the embedding \eqref{eq:localemb} also implies \eqref{eq:Hbin}, in this case by first translating \eqref{eq:interpolation} to $\mathbb{C}_{1/2}$ with $\mathcal{T}$. We have been able to prove \eqref{eq:Hbin} by different methods, which is our Theorem~\ref{thm:bergmanemb}.

The proof uses several tools from harmonic analysis and analytic number theory. The first is a special case of a result of Weissler \cite{Weissler80}, who studied the hypercontractivity of the Poisson kernel.
\begin{lem}\label{lem:weissler}
Let $p\in[1,2]$. For any $f(z)=\sum_{k\geq 0}a_kz^k$, we have the contractive estimate
\[\left(\sum_{k=0}^\infty |a_k|^2 \left(\frac{p}{2}\right)^k\right)^{1/2}\leq \|f\|_{H^p(\mathbb{D})}.\]
\end{lem}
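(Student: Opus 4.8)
The plan is to recognize this as the critical (boundary) case of Weissler's sharp hypercontractivity theorem for the dilation operator on Hardy spaces of the disc. Writing $f_r(z) := f(rz)$, the left-hand side is exactly $\|f_r\|_{H^2(\mathbb{D})}^2 = \sum_{k\geq 0} |a_k|^2 r^{2k}$ once we set $r = \sqrt{p/2}$. Thus the lemma asserts that the dilation $D_r\colon H^p(\mathbb{D}) \to H^2(\mathbb{D})$, $D_r f = f_r$, is a \emph{contraction} precisely at the radius $r = \sqrt{p/2}$. I would first record the two routine reductions. By density of polynomials in $H^p(\mathbb{D})$ and continuity of both sides (the right-hand side in $H^p$, the left-hand side then following), it suffices to treat a polynomial $f$. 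Then, using the inner--outer factorization $f = B\cdot O$ with $B$ a finite Blaschke product and $O$ zero-free in $\mathbb{D}$, the bound $|B|\leq 1$ in $\mathbb{D}$ gives $|f_r|\leq |O_r|$ pointwise, while $|f| = |O|$ on $\mathbb{T}$; hence $\|f_r\|_{H^2}\leq \|O_r\|_{H^2}$ and $\|f\|_{H^p} = \|O\|_{H^p}$, so it is enough to prove the inequality for zero-free $f$.

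The key manoeuvre is a holomorphic substitution. For zero-free $f$ I would fix an analytic branch and set $g = f^{p/2}$, so that $|g|^2 = |f|^p$ on $\mathbb{T}$ yields $\|g\|_{H^2}^2 = \|f\|_{H^p}^p$, and $|f_r|^2 = |g_r|^{4/p}$ throughout $\mathbb{D}$. The target inequality $\|f_r\|_{H^2}^2 \leq \|f\|_{H^p}^2$ then transforms, with $q := 4/p \geq 2$, into
\[
\|g_r\|_{H^q(\mathbb{D})} \leq \|g\|_{H^2(\mathbb{D})} \qquad \text{at } r = \sqrt{2/q},
\]
that is, into the \emph{forward} hypercontractive estimate $D_r\colon H^2 \to H^q$ evaluated at its critical radius. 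In other words, the $H^p \to H^2$ contraction we want is equivalent, via $g = f^{p/2}$, to the classical $H^2 \to H^q$ hypercontractive inequality.

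The hard part will be establishing this forward contraction with constant exactly $1$. Crude tools are insufficient here: estimating $g_r = P_r * g$ by Young's convolution inequality against the Poisson kernel $P_r$ yields boundedness but \emph{not} the contractive constant, and the sharpness of the radius $r = \sqrt{2/q}$ leaves no slack. This forces the genuine hypercontractivity machinery — the logarithmic Sobolev inequality for the Poisson semigroup on $\mathbb{T}$ — which is precisely the content of Weissler's theorem. I therefore expect essentially all of the work to sit in this single step.

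Accordingly, in practice I would simply invoke Weissler's sharp result, verifying that the exponent pair $(p,2)$ corresponds to the critical radius $r = \sqrt{p/2}$, so that the dilation estimate is contractive and the stated inequality follows; the substitution and factorization reductions above serve mainly to make transparent \emph{why} the threshold factor $(p/2)^k$ appears and why the constant is $1$ rather than something larger.
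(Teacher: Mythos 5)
Your argument is correct and ultimately rests on the same foundation as the paper, which offers no proof of this lemma and simply cites it as a special case of Weissler's sharp hypercontractivity theorem for dilations on Hardy spaces of the disc. Your reductions (polynomials, inner--outer factorization, the substitution $g=f^{p/2}$ converting the $H^p\to H^2$ contraction at $r=\sqrt{p/2}$ into the forward $H^2\to H^{4/p}$ contraction at its critical radius) are sound and correctly identify where the real difficulty lies, but since you then invoke Weissler's result for that step, the proof is in substance identical to the paper's citation.
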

The second tool is a way to iterate this inequality multiplicatively, first devised in \cite{Bayart02} and later used in \cite{BHS15,Helson06}. We formulate it in an abstract context and we give a brief account of the proof.
\begin{lem}\label{lem:iteration}
Let $p\in[1,2]$ and assume that there exists a sequence $\{\gamma_k\}_{k\geq0}$ of positive real numbers with $\gamma_0=1$, such that for every $f(z)=\sum_{k\geq 0}a_kz^k\in H^p(\DD)$, 
$$\left(\sum_{k=0}^\infty|a_k|^2\,\gamma_k\right)^{1/2}\leq \|f\|_{H^p(\DD)}.$$
Let $\Gamma(n)$ denote the multiplicative function defined on the prime powers by $\Gamma(p_j^k) = \gamma_k$. Then, 
$$\left(\sum_{n=1}^\infty |a_n|^2 \,\Gamma(n)\right)^{1/2}\leq \|f\|_{\mathcal H^p},$$
for every $f(s)=\sum_{n\geq 1}a_nn^{-s}\in\mathcal H^p$.
\end{lem}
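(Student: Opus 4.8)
The plan is to lift everything to the infinite polytorus via the Bohr correspondence and exploit the multiplicative structure of $\Gamma$, which factors as a tensor product of one-variable weights. Recall that $\mathcal{B}$ is an isometric isomorphism from $\mathcal{H}^p$ onto $H^p(\mathbb{T}^\infty)$, so for $f(s)=\sum_{n\geq 1}a_n n^{-s}$ the Bohr lift is $\mathcal{B}f(z)=\sum_n a_n z^{\kappa(n)}$ and $\|f\|_{\mathcal{H}^p}=\|\mathcal{B}f\|_{H^p(\mathbb{T}^\infty)}$. Since $\Gamma$ is multiplicative with $\Gamma(p_j^k)=\gamma_k$, we have $\Gamma(n)=\prod_{j}\gamma_{\kappa_j(n)}$; thus the target sum $\sum_n |a_n|^2\,\Gamma(n)$ is exactly the squared norm of $\mathcal{B}f$ in a weighted Bergman-type space on $\mathbb{D}^\infty$ where the weight in each variable assigns mass $\gamma_k$ to the monomial $z_j^k$. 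The hypothesis on $\{\gamma_k\}$ is precisely the statement that the one-variable operator sending $\sum_k a_k z^k$ to the weighted $\ell^2$ quantity $(\sum_k |a_k|^2\gamma_k)^{1/2}$ is contractive from $H^p(\mathbb{T})$. The whole point is to tensorize this one-variable contraction over infinitely many variables.

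First I would set up the one-variable contraction as an operator statement. Define, for a single variable, the weighted composition-type estimate as an embedding $H^p(\mathbb{T})\hookrightarrow A^2_\gamma$, where $A^2_\gamma$ is the Hilbert space with orthonormal-up-to-scaling monomials $\{z^k/\sqrt{\gamma_k}\}$; the hypothesis says this embedding has norm at most $1$. Because $\gamma_0=1$, the constant function is sent to itself, which is what makes the iteration contractive rather than merely bounded. The key step is then an inductive tensorization: treating the variables one at a time, one applies the one-variable inequality in the first variable $z_1$ while freezing the remaining variables, using the fact that for fixed $(z_2,z_3,\dots)\in\mathbb{T}^\infty$ the slice $z_1\mapsto \mathcal{B}f(z_1,z_2,\dots)$ lies in $H^p(\mathbb{T})$. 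This converts an integral over $\mathbb{T}$ in $z_1$ into a weighted $\ell^2$ sum in the first exponent, at the cost of contracting the $\mathbb{T}$-integral to something dominated by the $H^p(\mathbb{T}^\infty)$ norm. Iterating over $z_2,z_3,\dots$ and invoking Fubini to peel off one variable at a time, each application being contractive, produces the full weighted sum $\sum_n |a_n|^2\Gamma(n)$ on the left and $\|f\|_{\mathcal{H}^p}^p$ (raised to the appropriate power) on the right.

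The main technical subtlety — and the step I expect to be the real obstacle — is justifying the infinite iteration and the passage to the limit. For $p<2$ the intermediate quantities are $L^p$ norms of slice functions, not $L^2$, so one cannot simply apply Minkowski's integral inequality blindly; the contraction must be applied in a way compatible with the $p$-th power integration in the frozen variables. The cleanest route is to first prove the inequality for Dirichlet polynomials, where only finitely many variables appear and the induction is a genuine finite tensorization with no convergence issues, and then extend to all of $\mathcal{H}^p$ by density together with Fatou's lemma applied to the (nonnegative, monotone) partial sums $\sum_{n\leq N}|a_n|^2\Gamma(n)$. The contractivity of each one-variable step (constant $1$, not $C_p>1$) is exactly what guarantees that the constant does not blow up as the number of variables tends to infinity, which is the whole reason the multiplicative iteration succeeds here where a naive iteration of the interpolation inequality~\eqref{eq:interpolation} fails.
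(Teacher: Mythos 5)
Your proposal is correct and follows essentially the same route as the paper: reduce via the Bohr lift to finitely many variables (Dirichlet polynomials), induct on the number of variables by applying the one-variable contraction to slices with the remaining coordinates frozen, and exploit $\gamma_0=1$ so that the constant stays equal to $1$ through the iteration. The ``subtlety'' you flag about mixing the $\ell^2$-type sum with the $L^p$ integration in the frozen variables is resolved exactly as you suspect it should be: after raising the one-variable inequality to the $p$-th power one applies Minkowski's integral inequality with exponent $r=2/p\geq 1$ to swap the two integrations, which is precisely the step the paper carries out before invoking the induction hypothesis.
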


\begin{proof}
Fix $d\geq1$ and $f(z)=\sum_{\kappa \in \mathbb{N}^d} a_\kappa z^\kappa\in H^p(\TT^d)$. By the Bohr lift, it is sufficient to prove that
\begin{equation} \label{eq:Hpdin}
	\left(\sum_{\kappa\in\NN^d}|a_\kappa|^2\gamma_{\kappa_1}\cdots \gamma_{\kappa_d}\right)^{1/2}\leq \|f\|_{H^p(\TT^d)}.
\end{equation}
The assumption of the lemma is that \eqref{eq:Hpdin} holds for $d=1$. We will argue by induction on $d$ and assume that \eqref{eq:Hpdin} is true for $d-1$. Then, fixing $z_1,\dots,z_{d-1}\in\TT^{d-1}$ and considering $f$ a function only of $z_d$, we use \eqref{eq:Hpdin} with $d=1$ to get
$$\left(\int_{\TT}\left|\sum_{\kappa\in\NN^d}a_\kappa \gamma_{\kappa_d}^{1/2}z_1^{\kappa_1}\cdots z_d^{\kappa_d}\right|^2dm(z_d)\right)^{p/2}\leq \int_{\TT}\left|\sum_{\kappa\in\NN^d}a_\kappa z_1^{\kappa_1}\cdots z_d^{\kappa_d}\right|^pdm(z_d).$$
We integrate over the remaining coordinates $z_1,\dots,z_{d-1}$ and use Minkowski inequality in the following form: For measure spaces $X$ and $Y$, a measurable function $g$ on $X\times Y$ and $r\geq 1$,
$$\left(\int_X\left(\int_Y |g(x,y)|dy\right)^rdx\right)^{1/r}\leq \int_Y\left(\int_X|g(x,y)|^rdx\right)^{1/r}dy.$$
This yields, with $X=\mathbb{T}$, $Y=\mathbb{T}^{d-1}$ and $r=2/p$, that
$$\left(\int_{\TT}\left(\int_{\TT^{d-1}}\left|\sum_{\kappa\in\NN^d}a_\kappa \gamma_{\kappa_d}^{1/2}z_1^{\kappa_1}\cdots z_d^{\kappa_d}\right|^pdm(z_1)\cdots dm(z_{d-1})\right)^{2/p}dm(z_d)\right)^{p/2}\leq \|f\|^p_{H^p(\TT^d)}.$$
The induction hypothesis allows us to conclude.
\end{proof}

Our final tool is a number theoretic estimate on the average order of a multiplicative function. Let $\Omega(n)$ be the total number of prime divisors of $n$, say $\Omega(p_1^{\kappa_1}\cdots p_d^{\kappa_d})=\kappa_1+\cdots+\kappa_d$. For $0<y<2$ we refer to  Selberg--Delange method (see \cite[Thm.~II.6.2]{Tenenbaum15}) and for $y=2$ we refer to \cite{Bateman57}.
\begin{lem} \label{lem:turan}
	Let $0<y\leq2$. Then
	\begin{equation} \label{eq:avordest}
		\frac{1}{x}\sum_{n\leq x} y^{\Omega(n)} \asymp \begin{cases}
			(\log{x})^{y-1} & \text{if }\, 0<y<2, \\
			(\log{x})^2 & \text{if }\, y=2.
		\end{cases}
	\end{equation}
\end{lem}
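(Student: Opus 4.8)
The plan is to realize $y^{\Omega(n)}$ as a completely multiplicative function and to study its Dirichlet series through its relation to a power of the Riemann zeta function, so that the Selberg--Delange machinery applies directly. Since $\Omega$ is completely additive, $n \mapsto y^{\Omega(n)}$ is completely multiplicative with value $y$ at every prime, and hence for $\mre(s) > 1$ we have the Euler product
\[
F(s) := \sum_{n=1}^\infty \frac{y^{\Omega(n)}}{n^s} = \prod_p \left(1 - \frac{y}{p^s}\right)^{-1}.
\]
Comparing with $\zeta(s)^y = \prod_p (1 - p^{-s})^{-y}$, I would factor $F(s) = \zeta(s)^y G(s)$, where
\[
G(s) = \prod_p \left(1 - \frac{y}{p^s}\right)^{-1}\left(1 - \frac{1}{p^s}\right)^{y}.
\]

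For $0 < y < 2$, the next step is to check that $G$ is harmless near $s=1$. Expanding the local factor shows that the $p^{-s}$ terms cancel exactly, leaving $\left(1 - y p^{-s}\right)^{-1}\left(1 - p^{-s}\right)^{y} = 1 + \tfrac{y(y-1)}{2}p^{-2s} + O(p^{-3s})$. Consequently the product defining $G$ converges absolutely and locally uniformly in the half-plane $\mre(s) > 1/2$, where $G$ is holomorphic, zero-free, and bounded on every half-plane $\mre(s) \geq 1/2 + \delta$ --- which is more than enough to place $F$ in the class covered by the Selberg--Delange method (Tenenbaum II.6.2) with exponent $z = y$. That theorem then yields
\[
\sum_{n \leq x} y^{\Omega(n)} = \frac{G(1)}{\Gamma(y)}\, x (\log x)^{y-1}\bigl(1 + o(1)\bigr).
\]
Since $y > 0$ forces $\Gamma(y) \in (0,\infty)$ and $G(1) = \prod_p (1 - y/p)^{-1}(1 - 1/p)^y$ is a convergent product of positive factors, the constant is positive and finite, giving the stated relation $\asymp (\log x)^{y-1}$ with both bounds at once.

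The main obstacle is the endpoint $y = 2$, where this picture degenerates: the local factor of $G$ at $p = 2$ is $\left(1 - 2^{1-s}\right)^{-1}\left(1 - 2^{-s}\right)^{2}$, which has a pole at $s = 1$, so $G(1) = \infty$ and the leading constant above is meaningless. I would handle this by isolating the prime $2$, writing
\[
F(s) = \left(1 - 2^{1-s}\right)^{-1}\left(1 - 2^{-s}\right)^{2}\,\zeta(s)^2\, H(s), \qquad H(s) = \prod_{p \geq 3}\left(1 - \frac{2}{p^s}\right)^{-1}\left(1 - \frac{1}{p^s}\right)^{2},
\]
where $H$ is holomorphic and nonzero in a neighbourhood of $s = 1$, the nearest degeneracy (from $p = 3$) occurring only at $s = \log 2/\log 3 < 1$. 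Now $\zeta(s)^2$ contributes a double pole at $s = 1$ while $(1 - 2^{1-s})^{-1}$ contributes an additional simple pole there, so $F$ has a pole of order three at $s=1$; this extra pole, forced by the single prime $2$, is precisely the source of the additional logarithm. The resulting asymptotic $\sum_{n \leq x} 2^{\Omega(n)} \asymp x (\log x)^2$ is exactly the content of Bateman's result, to which I would appeal to close this case.
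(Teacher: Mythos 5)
Your proposal follows essentially the same route as the paper, which for this lemma simply cites the Selberg--Delange method (Tenenbaum, Thm.~II.6.2) for $0<y<2$ and Bateman for $y=2$; you have fleshed out the factorization $F=\zeta^y G$ that makes those citations applicable, and your diagnosis of the $y=2$ phase change (the extra simple pole at $s=1$ coming from the Euler factor at $p=2$) is exactly the phenomenon the paper points to.

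One intermediate claim is wrong as stated, though it does not sink the argument. For $0<y<2$ the local factor $\left(1-yp^{-s}\right)^{-1}\left(1-p^{-s}\right)^{y}$ has poles on the vertical line $\mre(s)=\log y/\log p$, and for $p=2$ this abscissa equals $\log y/\log 2$, which exceeds $1/2$ as soon as $y>\sqrt{2}$ (and the $p=3$ factor misbehaves once $y>\sqrt{3}$). So $G$ is \emph{not} holomorphic and bounded on every half-plane $\mre(s)\geq 1/2+\delta$ throughout the range $0<y<2$; this matters here because the lemma is applied with $y=2/p\in(1,2)$, which does reach into $(\sqrt 2,2)$. The fix is routine: the Selberg--Delange theorem only requires $G=F\zeta^{-y}$ to be holomorphic with polynomial growth in a narrow zero-free-type region $\sigma\geq 1-c_0/\log(2+|t|)$, and since all poles of $G$ lie on lines $\mre(s)=\log y/\log p\leq \log y/\log 2<1$ for $y<2$, such a $c_0=c_0(y)$ exists. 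You should state and verify that weaker hypothesis rather than absolute convergence in $\mre(s)>1/2$. With that correction, the positivity and finiteness of $G(1)/\Gamma(y)$ and the appeal to Bateman at $y=2$ complete the proof as you describe.
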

Observe the phase change at $y=2$, which occurs since $2$ is the first prime number. We are now ready to proceed with the proof of \eqref{eq:Hbin}.
\begin{proof}[Proof of Theorem \ref{thm:bergmanemb}]
Combining Lemma~\ref{lem:weissler} and Lemma~\ref{lem:iteration}, we get the inequality
\begin{equation} \label{eq:newemb}
	\left(\sum_{n=1}^\infty |a_n|^2\,\left(\frac p2\right)^{\Omega(n)}\right)^{1/2}\leq \|f\|_{\mathcal H^p},
\end{equation}
for every $f(s)=\sum_{n\geq 1}a_nn^{-s}\in\mathcal H^p$, since in this case $\Gamma(n) = (p/2)^{\Omega(n)}$. In other words, following the conventions of \cite{Olsen11}, the space $\mathcal H^p$ is continuously embedded into $$\mathcal H_{w_p}:=\left\{\sum_{n=1}^\infty  a_nn^{-s}\,:\, \|f\|_{w_p} := \left(\sum_{n=1}^\infty |a_n|^2/w_p(n)\right)^\frac{1}{2}<\infty\right\}, \qquad \text{where} \quad w_p(n) = \left(\frac 2p\right)^{\Omega(n)}.$$
The main result of \cite{Olsen11} relates the average order of the weight $w(n)$ with the optimal embedding of $\mathcal{H}_w$ into $D_{\beta,\operatorname{i}}$, the relation being the two-sided estimate
\begin{equation} \label{eq:avorder}
	\frac{1}{x}\sum_{n\leq x} w(n) \asymp (\log{x})^{\beta}.
\end{equation}
Now, the case $p=1$ was discussed and resolved above, using Helson's inequality. For $1<p<2$, we have $1< y <2$, so we conclude using \eqref{eq:avordest} that $\mathcal{H}_{w_p}$ is continuously embedded in $\mathcal{D}_{2/p-1,\operatorname{i}}$ and that the parameter $2/p-1$ is optimal, with respect to $\mathcal{H}_{w_p}$. This proves \eqref{eq:Hbin}, using \eqref{eq:newemb}.

It remains only to verify that the optimality of the parameter $2/p-1$ extends to $\mathcal{H}^p$. Fix $\varepsilon>0$ and consider
\[f_\varepsilon(s) = \frac{[\zeta(s+1/2+\varepsilon)]^{2/p}}{[\zeta(1+2\varepsilon)]^{1/p}},\]
which, as shown in \cite[Thm.~3]{Bayart02}, satisfies $\|f_\varepsilon\|_{\mathcal{H}^p}=1$. For $s=\sigma+it$ satisfying, say, $1< \sigma <3/2$ and $0< t< 1$, we have that $\zeta(s) \asymp (s-1)^{-1}$. Assume now that $\mathcal{H}^p$ embed continuously into $D_{\beta,\operatorname{i}}$. Then, for $1\leq p<2$ and $0<\beta\leq 1$, we estimate
\begin{align*}
	1 \gg \|f_\varepsilon\|_{D_{\beta,\operatorname{i}}} &\gg \int_{1/2}^1 \int_{0}^1 \frac{|\zeta(s+1/2+\varepsilon|]^{4/p}}{[\zeta(1+2\varepsilon)]^{2/p}}\,\left(\sigma-\frac{1}{2}\right)^{\beta-1}\,dtd\sigma \\
	&\gg \varepsilon^{2/p} \int_{1/2}^1 \int_{0}^1 \frac{(\sigma-1/2)^{\beta-1}}{\left((\sigma-1/2+\varepsilon)^2+t^2\right)^{2/p}}\,dtd\sigma \\
	&\asymp \varepsilon^{2/p} \int_{1/2}^1 \frac{(\sigma-1/2)^{\beta-1}}{(\sigma-1/2+\varepsilon)^{4/p-1}}\,d\sigma \gg \varepsilon^{2/p+\beta-4/p+1},
\end{align*}
which means that if $\mathcal{H}^p$ is continuously embedded in $D_{\beta,\operatorname{i}}$, then necessarily $\beta\geq 2/p-1$.
\end{proof}

Let us compare the space $\mathcal H_{w_p}$ to the space $\mathcal D_\alpha$ for $\alpha=1-\log p/\log 2$. It turns out that if $n$ is square-free, then $(p/2)^{\Omega(n)} = 1/[d(n)]^{\alpha}$. For other values, $w_p(n)$ is strictly smaller than $1/[d(n)]^{\alpha}$, and it can be significantly smaller, most easily seen by considering $n=2^k$. Thus, the space $\mathcal{H}_{w_p}$ is (strictly) bigger than $\mathcal D_\alpha$. However, when $1<p<2$, the weights $w_p(n)$ are dominated by their square-free parts, so $\mathcal{D}_\alpha$ and $\mathcal{H}_{w_p}$ are embedded into the same $\mathcal{D}_\beta$.

To explain why this happens, let $\xi$ be any positive multiplicative function with $\xi(p_j)=\beta$ and $\xi(p_j^k) \ll (2-\delta)^k$ for some $0<\delta<2$. Then, for $\mre(s)>1$, 
\begin{align*}
	\sum_{n=1}^\infty \xi(n) n^{-s} &= \prod_{j=1}^\infty \left(1 + \beta p_j^{-s} + \sum_{k=2}^\infty \xi(p_j^k)p_j^{-ks}\right) \\ &= [\zeta(s)]^{\beta}\prod_{j=1}^\infty \left(1+ \beta p_j^{-s} + \mathcal{O}(p_j^{-2s})\right)\left(1-\beta p_j^{-s} + \mathcal{O}(p_j^{-2s})\right) = [\zeta(s)]^{\beta}\prod_{j=1}^\infty \left(1 + \mathcal{O}(p_j^{-2s})\right),
\end{align*}
so by the Selberg--Delange method, we find $\sum_{n\leq x} \xi(n) \asymp x (\log{x})^{\beta-1}$. Observe again the phase change at $\delta=0$, leading to different embeddings for $\mathcal{H}_{w_1}$ and $\mathcal{D}_1$ in view of \eqref{eq:avorder}, since the latter weight satisfies the assumption $\xi(p_j^k)\ll (2-\delta)^k$, while the former does not.

\begin{rem}
	By using Weissler's inequality \cite{Weissler80} for $p\geq2$ and arguing as in the proof of Lemma~\ref{lem:iteration}, we find that if $f(s)=\sum_{n\geq1} a_n n^{-s}$ and $2\leq p < \infty$, then
	\begin{equation} \label{eq:weissler2}
		\|f\|_{\mathcal{H}^p} \leq \left(\sum_{n=1}^\infty |a_n|^2 \left(\frac{p}{2}\right)^{\Omega(n)}\right)^\frac{1}{2}.
	\end{equation}
	This inequality allows us to improve a result on the bounded zero sequences of $\mathcal{H}^p$ from \cite{Seip13}. We achieve this by replacing \cite[Lem.~9]{Seip13} with \eqref{eq:weissler2} and \cite[Lem.~6]{Seip13} with Lemma~\ref{lem:turan}. No additional changes to the arguments are required. In the notation of \cite{Seip13} we get that
	\begin{equation} \label{eq:seipimproved}
		Z\left(D_{1-2/p}(\mathbb{C}_{1/2})\right) \subset Z(\mathcal{H}^p),
	\end{equation}
	for $2<p<\infty$. This improves a similar statement from \cite[Sec.~4]{Seip13} when $p$ is not an even integer. 
	
	Taking the dual of \eqref{eq:interpolation} in the $H^2(\mathbb{D})$ pairing and recalling that $(H^p)^\ast \cong H^{p/(p-1)}$ for $1<p<\infty$, we find that if $f(z) = \sum_{k\geq0} a_k z^k$ and $2\leq p < \infty$, then
	\[\|f\|_{H^p(\mathbb{D})} \leq C_p \left(\sum_{k=0}^\infty |a_k|^2 (k+1)^{1-2/p}\right)^\frac{1}{2}.\]
	As in \eqref{eq:interpolation} the parameter $1-2/p$ is optimal. This indicates that \eqref{eq:seipimproved} is the best possible result of this type we can hope to obtain by Hilbert space techniques.
\end{rem}

\section{Carleson measures in the half-plane and on the polydisc} \label{sec:carleson} 
\subsection{Carleson measures in the half-plane} The non-conformal Bergman space $D_\beta(\mathbb{C}_{1/2})$, for $\beta>0$, consists of the holomorphic functions $f$ in $\mathbb{C}_{1/2}$ which satisfy
\[\|f\|_{D_\beta(\mathbb{C}_{1/2})}^2 := \int_{\mathbb{C}_{1/2}} |f(s)|^2\left(\sigma-\frac{1}{2}\right)^{\beta-1}\,ds < \infty.\]
If $\beta=0$, then $D_\beta(\mathbb{C}_{1/2})$ is taken to be the non-conformal Hardy space, $H^2(\mathbb{C}_{1/2})$, with norm
\[\|f\|_{H^2(\mathbb{C}_{1/2})}^2 := \sup_{\sigma>1/2} \int_{\mathbb{R}} |f(\sigma+it)|^2\, dt<\infty.\]
For $\alpha,\beta\geq0$, let $X$ denote either $\mathcal{D}_\alpha$ or $D_\beta(\mathbb{C}_{1/2})$. A positive Borel measure $\mu$ on $\mathbb{C}_{1/2}$ is called a \emph{Carleson measure} for $X$ provided there is a constant $C=C(X,\mu)$ such that for every $f \in X$,
\[\int_{\mathbb{C}_{1/2}}|f(s)|^2\,d\mu(s) \leq C \|f\|_X^2.\]
The smallest such constant $C(X,\mu)$ is called the \emph{Carleson constant} for $\mu$ with respect to $X$. A Carleson measure $\mu$ is said to be a \emph{vanishing Carleson measure} for $X$ provided
\[\lim_{k \to \infty} \int_{\mathbb{C}_{1/2}} |f_k(s)|^2\,d\mu(s) = 0\]
for every weakly compact sequence $\left\{f_k\right\}_{k\geq1}$ in $X$. In this case, weakly compact means that $\phi(f_k)\to0$ for every $\phi\in X^\ast$. Since both $X=D_\beta(\mathbb{C}_{1/2})$ and $X=\mathcal{D}_\alpha$ are reproducing kernel spaces, it is clear that $\left\{f_k\right\}_{k\geq1}$ in $X$ is weakly compact if and only if $\|f_k\|_X \leq C$ and $f_k(s) \to 0$ on every compact subset $K$ of $\mathbb{C}_{1/2}$.
\begin{lem}
	\label{lem:Carlesonmeasures} Let $\alpha\geq0$. Suppose that $\mu$ is a Borel measure on $\mathbb{C}_{1/2}$ with bounded support. Then $\mu$ is a Carleson measure for $\mathcal{D}_\alpha$ if and only if $\mu$ is a Carleson measure for $D_{2^\alpha-1}(\mathbb{C}_{1/2})$. Moreover, $\mu$ is vanishing Carleson for $\mathcal{D}_\alpha$ if and only if $\mu$ is vanishing Carleson for $D_{2^\alpha-1}(\mathbb{C}_{1/2})$. 
\end{lem}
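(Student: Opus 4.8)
The plan is to show that, for a measure $\mu$ with bounded support, being a Carleson measure for either space is equivalent to a single geometric condition on Carleson boxes. Writing $Q(\tau,h) = \{s=\sigma+it \,:\, 1/2<\sigma<1/2+h,\ |t-\tau|<h\}$, the condition will be $\mu(Q(\tau,h)) \ll h^{2^\alpha}$ for all $\tau\in\RR$ and all small $h>0$. The exponent $2^\alpha=\beta+1$ is precisely the one dictated by the reproducing kernels. For $\mathcal D_\alpha$ the kernel is $K_w(s)=\sum_{n\geq1}[d(n)]^\alpha n^{-s-\bar w}$, and a direct computation gives $\|K_w\|_{\mathcal D_\alpha}^2 = \sum_{n\geq1}[d(n)]^\alpha n^{-2\sigma}\asymp (\sigma-1/2)^{-2^\alpha}$ as $\sigma\to 1/2^+$, using the behaviour $\sum_n [d(n)]^\alpha n^{-u}\asymp \zeta(u)^{2^\alpha}\asymp(u-1)^{-2^\alpha}$ near $u=1$ (cf.\ \eqref{eq:wilsonfactor}); this matches $\|K_w\|_{D_\beta(\CC_{1/2})}^2\asymp(\sigma-1/2)^{-(\beta+1)}$ exactly when $\beta=2^\alpha-1$. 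Since $\supp\mu$ is bounded, only boxes with bounded $h$ are relevant, and on any bounded region the weights defining $D_\beta(\CC_{1/2})$, $D_{\beta,\operatorname{i}}$ and (through $\mathcal T$) $D_\beta(\DD)$ are mutually comparable; hence the associated Carleson conditions coincide for such $\mu$, a reduction I use freely below.

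For the implication that a Carleson measure for $D_{2^\alpha-1}(\CC_{1/2})$ is a Carleson measure for $\mathcal D_\alpha$, I would invoke the local embedding directly. By Theorem~\ref{thm:Dequiv} together with \cite{Olsen11}, the space $\mathcal D_\alpha$ embeds continuously into $D_{\beta,\operatorname{i}}$ with $\beta=2^\alpha-1$, so $\|f\|_{D_{\beta,\operatorname{i}}}\ll\|f\|_{\mathcal D_\alpha}$ for every $f\in\mathcal D_\alpha$. Given $f\in\mathcal D_\alpha$, the reduction of the previous paragraph lets me treat $\mu$ as a Carleson measure for $D_{\beta,\operatorname{i}}$, whence $\int_{\CC_{1/2}}|f|^2\,d\mu \ll \|f\|_{D_{\beta,\operatorname{i}}}^2 \ll \|f\|_{\mathcal D_\alpha}^2$, which is the Carleson inequality for $\mathcal D_\alpha$.

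The reverse implication is the \emph{crux}, since the local embedding is not reversible. Here I would test the assumed $\mathcal D_\alpha$-Carleson inequality on the normalized reproducing kernels $k_w=K_w/\|K_w\|$. Fix $w=\sigma_w+i\tau$ near the critical line and set $h=\sigma_w-1/2$. For $s\in Q(\tau,h)$ the argument $u=s+\bar w$ satisfies $\mre(u-1)\asymp h$ and $|\mim(u-1)|\ll h$, so $u-1$ stays in a fixed sector of the right half-plane at distance $\asymp h$ from the origin; consequently $|K_w(s)| = |\sum_n [d(n)]^\alpha n^{-u}| \asymp h^{-2^\alpha}\asymp\|K_w\|^2$. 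Therefore $\int_{\CC_{1/2}}|k_w|^2\,d\mu \geq \|K_w\|^{-2}\int_{Q(\tau,h)}|K_w|^2\,d\mu \gg \|K_w\|^2\,\mu(Q(\tau,h)) \asymp h^{-2^\alpha}\mu(Q(\tau,h))$, and the finiteness of the $\mathcal D_\alpha$-Carleson constant forces $\mu(Q(\tau,h))\ll h^{2^\alpha}$. The classical description of Carleson measures for the weighted Bergman space $D_{2^\alpha-1}(\CC_{1/2})$ (respectively the Hardy space $H^2(\CC_{1/2})$ when $\alpha=0$) then shows that $\mu$ is a Carleson measure for $D_{2^\alpha-1}(\CC_{1/2})$.

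For the vanishing statements the same scheme applies, with the box condition replaced by its little-oh form $\mu(Q(\tau,h))=o(h^{2^\alpha})$ as $h\to0$. In one direction, the embedding $\mathcal D_\alpha\hookrightarrow D_{\beta,\operatorname{i}}$ sends a bounded, locally null sequence to another such sequence, so vanishing Carleson for $D_{2^\alpha-1}(\CC_{1/2})$ passes to $\mathcal D_\alpha$. In the other, one notes that $k_w\to0$ uniformly on compacta while $\|k_w\|_{\mathcal D_\alpha}=1$ as $\sigma_w\to1/2^+$, so the $k_w$ are weakly null and vanishing Carleson for $\mathcal D_\alpha$ is equivalent to $\int|k_w|^2\,d\mu\to0$; by the kernel estimate above this is equivalent to the little-oh box condition and hence to vanishing Carleson for $D_{2^\alpha-1}(\CC_{1/2})$. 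The main obstacle is the necessity step of the reverse direction: one must control $K_w$ pointwise on the box, and the estimate $|K_w(s)|\asymp\|K_w\|^2$ hinges on keeping $u-1=s+\bar w-1$ inside a sector of the right half-plane, away from the direction in which $\zeta^{2^\alpha}$ is singular. This is what the calibration $h\asymp\sigma_w-1/2$ guarantees, and it is where the boundedness of $\supp\mu$ and the precise value $\beta=2^\alpha-1$ enter decisively.
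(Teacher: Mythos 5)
Your proposal is correct and follows essentially the same route as the paper: one direction via the local embedding of $\mathcal{D}_\alpha$ into $D_{2^\alpha-1,\operatorname{i}}$ combined with the bounded-support trick of dividing by a power of $(s+1/2)$ to pass to the non-conformal space, and the other via testing on normalized reproducing kernels calibrated so that $s+\overline{w}-1$ stays in a sector of size $\asymp h$, exactly as in the paper's contradiction argument for the vanishing case. The only quibble is your phrase that vanishing Carleson for $\mathcal{D}_\alpha$ ``is equivalent to'' $\int|k_w|^2\,d\mu\to0$ --- only the forward implication is immediate, but that is the only direction your argument actually uses, so nothing is lost.
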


The first part of this result can be extracted from \cite{Olsen11,OS12}. In preparation for the part regarding vanishing Carleson measures, let us collect some preliminary results. The following geometric characterization of Carleson measures for Bergman spaces can be found in \cite[Sec.~7.2]{Zhu}.
\begin{lem}
	\label{lem:carlesonclassic} Let $\beta\geq0$ and let $\mu$ be a Borel measure on $\mathbb{C}_{1/2}$. Then $\mu$ is a Carleson measure for $D_{\beta}(\mathbb{C}_{1/2})$ if and only if
	\[\mu\big(Q(\tau,\varepsilon)\big) = \mathcal{O}\big(\varepsilon^{\beta+1}\big)\]
	for every Carleson square $Q(\tau,\varepsilon) = [1/2,1/2+\epsilon]\times[\tau-\varepsilon/2,\tau+\varepsilon/2]$. Additionally, $\mu$ is vanishing Carleson for $D_{\beta}(\mathbb{C}_{1/2})$ if and only if
	\[\mu\big(Q(\tau,\varepsilon)\big) = o\big(\varepsilon^{\beta+1}\big),\]
	as $\varepsilon \to 0^+$, uniformly for $\tau\in\mathbb R$. 
\end{lem}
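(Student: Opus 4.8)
This is the classical half-plane Carleson/Luecking theorem with the Hardy endpoint $\beta=0$ included (see \cite{Zhu}), and the plan is to prove each of the two equivalences by establishing necessity through a reproducing-kernel test and sufficiency through a sub-mean-value estimate combined with a Whitney decomposition, treating the Bergman range $\beta>0$ and the Hardy case $\beta=0$ in parallel. An alternative would be to transport everything to $\mathbb{D}$ through $\mathcal{T}$ and quote the disc version, but I would argue directly in $\mathbb{C}_{1/2}$.

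For necessity I would test the Carleson inequality on the reproducing kernels of $D_\beta(\mathbb{C}_{1/2})$, which satisfy $K_w(s)\asymp (s+\bar w-1)^{-(\beta+1)}$ and $\|K_w\|^2=K_w(w)\asymp(\mre w-1/2)^{-(\beta+1)}$, the case $\beta=0$ giving the Cauchy--Szeg\H{o} kernel $(s+\bar w-1)^{-1}$. Fixing $Q(\tau,\varepsilon)$ and taking $w=1/2+\varepsilon+i\tau$, one has $|s+\bar w-1|\asymp\varepsilon$ throughout $Q(\tau,\varepsilon)$, so $|K_w(s)|^2\asymp\varepsilon^{-2(\beta+1)}$ there; feeding $K_w$ into the Carleson inequality gives $\varepsilon^{-2(\beta+1)}\mu(Q(\tau,\varepsilon))\ll\|K_w\|^2\asymp\varepsilon^{-(\beta+1)}$, that is $\mu(Q(\tau,\varepsilon))=\mathcal{O}(\varepsilon^{\beta+1})$. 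For the vanishing statement I would instead use the normalized kernels $k_w=K_w/\|K_w\|$, which form a weakly null family as $\mre w\to 1/2$; the vanishing Carleson property then forces $\int|k_w|^2\,d\mu\to0$, and the same pointwise lower bound yields $\int|k_w|^2\,d\mu\gg\mu(Q(\tau,\varepsilon))/\varepsilon^{\beta+1}$, hence the uniform estimate $\mu(Q(\tau,\varepsilon))=o(\varepsilon^{\beta+1})$, the uniformity in $\tau$ being automatic from translation invariance in the imaginary direction.

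For sufficiency I would use the sub-mean-value property of $|f|^2$: for $f$ holomorphic and $s=\sigma+it$, a Whitney ball of radius comparable to $\sigma-1/2$ gives $|f(s)|^2\ll(\sigma-1/2)^{-(\beta+1)}\int_{B(s)}|f(z)|^2\,(\mre z-1/2)^{\beta-1}\,dA(z)$. Tiling $\mathbb{C}_{1/2}$ by a dyadic Whitney family of Carleson squares $\{Q_{j,k}\}$ at heights $\varepsilon_k\asymp 2^{-k}$ and bounding $\sup_{Q_{j,k}}|f|^2$ by this estimate over a fixed dilate $\widetilde{Q}_{j,k}$, I would sum
\[\int_{\mathbb{C}_{1/2}}|f|^2\,d\mu\ll\sum_{j,k}\mu(Q_{j,k})\,\varepsilon_k^{-(\beta+1)}\int_{\widetilde{Q}_{j,k}}|f|^2\,(\mre z-1/2)^{\beta-1}\,dA(z)\ll\|f\|_{D_\beta(\mathbb{C}_{1/2})}^2,\]
where the last inequality uses $\mu(Q_{j,k})\ll\varepsilon_k^{\beta+1}$ and the bounded overlap of the dilated squares. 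The vanishing version is the quantitative refinement: given $\eta>0$, the $o$ condition supplies $\delta>0$ with $\mu(Q(\tau,\varepsilon))\le\eta\,\varepsilon^{\beta+1}$ for all $\tau$ and all $\varepsilon\le\delta$, so that the restriction of $\mu$ to the boundary strip $\{1/2<\mre s<1/2+\delta\}$ is Carleson with constant $\ll\eta$; for a weakly null bounded sequence the strip then contributes $\ll\eta$ uniformly, while the contribution from $\{\mre s\ge 1/2+\delta\}$ tends to $0$ because such a sequence converges to $0$ uniformly on compact subsets of $\mathbb{C}_{1/2}$.

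I expect the sufficiency in the vanishing case to be the main obstacle: decoupling the boundary-strip contribution (handled uniformly by the $o$ condition) from the interior contribution (handled by the pointwise decay of the weakly null sequence) requires that the region $\{\mre s\ge 1/2+\delta\}$ relevant to $\mu$ be relatively compact in $\mathbb{C}_{1/2}$. This is automatic when $\mu$ has bounded support, which is the context in which the lemma is applied; for general $\mu$ the parametrization $(\tau,\varepsilon)\in\mathbb{R}\times(0,\infty)$ only probes the finite boundary points $1/2+i\tau$, and the boundary point at infinity must be treated separately, which is the delicate point one must either fold into the precise uniform formulation or circumvent by the bounded-support reduction coming from Lemma~\ref{lem:Carlesonmeasures}. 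The endpoint $\beta=0$ also needs separate care, since the area/sub-mean-value argument must there be replaced by the nontangential maximal function underlying Carleson's embedding theorem; I would invoke Carleson's theorem directly to cover it alongside the range $\beta>0$.
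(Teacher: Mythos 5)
Your proof is essentially correct, but it is worth noting that the paper offers no proof of this lemma at all: it is quoted directly from \cite[Sec.~7.2]{Zhu} (the big-$\mathcal{O}$ part) together with its standard little-$o$ companion, so there is no argument in the paper to compare against. What you supply is the classical proof, and all the main steps check out: the kernel of $D_\beta(\mathbb{C}_{1/2})$ is indeed $\asymp(s+\bar w-1)^{-(\beta+1)}$ with $\|K_w\|^2\asymp(\mre w-1/2)^{-(\beta+1)}$, so testing on $K_w$ with $w=1/2+\varepsilon+i\tau$ gives necessity, and the sub-mean-value/Whitney argument (with each Whitney tile enclosed in a Carleson square of comparable scale, and bounded overlap of the dilates) gives sufficiency, with Carleson's embedding theorem covering $\beta=0$. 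More importantly, the obstacle you single out in the vanishing case is a genuine one, not a technicality you failed to resolve: for a measure with unbounded support the little-$o$ condition on Carleson squares does \emph{not} imply the vanishing Carleson property --- e.g.\ $\mu=\sum_k\delta_{1+ik}$ satisfies $\mu(Q(\tau,\varepsilon))=0$ for all $\varepsilon<1/2$ yet the normalized kernels $K_{1+ik}/\|K_{1+ik}\|$ form a weakly null sequence on which $\int|f_k|^2\,d\mu\gg1$. So the vanishing half of the lemma, as literally stated for arbitrary Borel measures, needs either a bounded-support hypothesis or a reformulation accounting for the boundary point at infinity; this is harmless in the paper, where the lemma is only ever applied (in Lemmas~\ref{lem:Carlesonmeasures} and~\ref{lem:carlesoncond}) to boundedly supported measures, exactly the reduction you propose. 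Your write-up is therefore not only correct but slightly more careful than the statement it is proving.
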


The reproducing kernels of $\mathcal{D}_\alpha$ are given by $K_\alpha(s,w) = \zeta_\alpha\left(s+\overline{w}\right)$, where
\[\zeta_\alpha(s) = \sum_{n=1}^\infty \left[d(n)\right]^\alpha n^{-s}.\]
It is clear that $\left\|K_\alpha(\cdot,w)\right\|_{\mathcal{D}_\alpha} = \sqrt{\zeta_\alpha(2\mre w)}$. We extract from \cite[pp.~240--241]{Wilson23} that 
\begin{equation}
	\label{eq:wilsonfactor} \zeta_\alpha(s) := \sum_{n=1}^\infty [d(n)]^\alpha \, n^{-s} = \left[\zeta(s)\right]^{2^\alpha} \prod_{j=1}^\infty \left(1 + \sum_{m=2}^\infty b_m p_j^{-ms}\right) =: \left[\zeta(s)\right]^{2^\alpha}\phi_\alpha(s), 
\end{equation}
where the Euler product $\phi_\alpha(s)$ converges absolutely in $\mathbb{C}_{1/2}$ with $\phi_\alpha(1)\neq0$.
\begin{proof}
	[Proof of Lemma~\ref{lem:Carlesonmeasures}] As stated above, the first part regarding Carleson measures can be extracted from \cite{Olsen11,OS12}. We will only consider the part pertaining to vanishing Carleson measures here.
	
	We argue first by contradiction. Assume that $\mu$ is vanishing Carleson for $\mathcal{D}_\alpha$, and that $\mu$ is not vanishing Carleson for $D_{2^\alpha-1}(\CC_{1/2})$. By Lemma~\ref{lem:carlesonclassic}, the latter assumption implies that there is some sequence of Carleson squares $\left\{Q_k(\tau_k,\varepsilon_k)\right\}_{k\geq1}$, where $\varepsilon_k \to 0$, satisfying
	\[\mu(Q_k) \gg \varepsilon_k^{2^\alpha}.\]
	Let $s_k = 1/2+\varepsilon_k + i\tau_k$ and consider
	\[f_k(s) = \frac{K_\alpha(s,s_k)}{\|K_\alpha(\cdot,s_k)\|_{\mathcal{D}_\alpha}} = \frac{\zeta_\alpha\left(s + \overline{s_k}\right)}{\sqrt{\zeta_\alpha(1+2\varepsilon_k)}}.\]
	It is easy to see that $f_k$ is weakly compact in $\mathcal{D}_\alpha$, since $\|f_k\|_{\mathcal{D}_\alpha}=1$ and $f_k(s)\to 0$ uniformly in $\sigma\geq1/2+\delta$ for every $\delta>0$. Since $\mu$ is assumed to be vanishing Carleson for $\mathcal{D}_\alpha$, this means that
	\[\lim_{k \to \infty }\int_{Q_k} |f_k(s)|^2\,d\mu(s) \leq \lim_{k\to \infty}\int_{\mathbb{C}_{1/2}} |f_k(s)|^2\,d\mu(s) = 0.\]
	Now, let $s = \sigma+it \in Q_k$. Then $1/2 \leq \sigma \leq 1/2+\varepsilon_k$ and $\tau_k-\varepsilon_k/2 \leq t \leq \tau_k + \varepsilon_k/2$. Recalling the simple pole of the zeta function and using \eqref{eq:wilsonfactor}, we obtain
	\[\zeta_\alpha\left(s + \overline{s_k}\right) \asymp \left(s+\overline{s_k}-1\right)^{-2^\alpha} \gg (1+2\varepsilon_k+i\varepsilon_k/2-1)^{-2^{\alpha}} \asymp \varepsilon_k^{-2^{\alpha}}.\]
	Similarly, $\sqrt{\zeta_\alpha(1+2\varepsilon_k)}\asymp \varepsilon_k^{-2^{\alpha-1}}$. Hence, by the assumption that $\mu$ is not vanishing Carleson for $D_{2^\alpha-1}(\mathbb{C}_{1/2})$, we estimate
	\[0=\lim_{k\to \infty}\int_{Q_k} |f_k(s)|^2\,d\mu(s) \gg \lim_{k\to \infty}\mu(Q_k) \varepsilon_k^{-2^{\alpha}} \gg 1,\]
	and the desired contradiction is obtained.
	
	In the other direction, assume that $\mu$ is vanishing Carleson for $D_{2^\alpha-1}(\mathbb{C}_{1/2})$. Let $\left\{f_k\right\}_{k\geq1}$ be a weakly compact sequence in $\mathcal{D}_\alpha$. Since $\mu$ has bounded support, there is some constant $M>0$ so that 
	\begin{equation}
		\label{eq:compactmuest} \int_{\mathbb{C}_{1/2}}|f_k(s)|^2\, d\mu(s) \leq M \int_{\mathbb{C}_{1/2}} \left|\frac{f_k(s)}{(s+1/2)^{2^\alpha}}\right|^2\,d\mu(s). 
	\end{equation}
	Let $F_k(s) = f_k(s)/(s+1/2)^{2^\alpha}$. Clearly $F_k(s)\to0$ on compact subsets $K$ of $\mathbb{C}_{1/2}$ since this is true for $f_k$. From \eqref{eq:DBi} and the discussion following Theorem~\ref{thm:Dequiv}, we conclude that $\|F_k\|_{D_{2^\alpha-1}} \ll \|f_k\|_{\mathcal{D}_\alpha}$. In particular, this implies that $\left\{F_k\right\}_{k\geq1}$ is a weakly compact sequence in $D_{2^\alpha-1}(\mathbb{C}_{1/2})$ and hence by \eqref{eq:compactmuest}, the measure $\mu$ is vanishing Carleson for $\mathcal{D}_\alpha$.
\end{proof}
\begin{rem}
The first part of the proof of Lemma~\ref{lem:Carlesonmeasures} does not use that $\mu$ has bounded support, so a vanishing Carleson measure for $\mathcal D_\alpha$ is always vanishing Carleson for $D_{2^\alpha-1}(\CC_{1/2})$.
\end{rem}

\subsection{Carleson measures on the polydisc} Let $\varphi \in \mathcal{G}$ with $\charac(\varphi)=0$, and let $\Phi$ denote the Bohr lift of $\varphi$. For $\beta\geq0$ we will consider the following measures on $\mathbb{C}_{1/2}$. 
\[\mu_{\beta,\varphi}(E) = 
	\begin{cases}
		\nu_\beta\big(\left\{z \in \mathbb{D}^\infty \,\colon\, \Phi(z) \in E\right\}\big), & \text{if }\beta > 0, \\
		\nu_\beta\big(\left\{z \in \mathbb{T}^\infty \,\colon\, \Phi(z) \in E \right\}\big), & \text{if }\beta = 0, 
	\end{cases}
	\qquad\qquad E \subset \mathbb{C}_{1/2}. \]
The following necessary and sufficient Carleson conditions for boundedness and compactness of $\mathcal{C}_\varphi$ when $\varphi\in\mathcal{G}$ with $\charac(\varphi)=0$ and $\varphi(\mathbb{C}_0)$ is a bounded set will be our main technical tool for the study of composition operators between the spaces $\mathcal D_\alpha$.
\begin{lem}
	\label{lem:carlesoncond} Let $\alpha,\beta\geq0$. Suppose that $\varphi\in\mathcal{G}$ with $\charac(\varphi)=0$ and suppose that $\varphi(\mathbb{C}_0)$ is a bounded subset of $\mathbb{C}_{1/2}$. Then $\mathcal{C}_\varphi \colon \mathcal{D}_\alpha \to \mathcal{D}_\beta$ is bounded if and only if 
	\begin{equation}
		\label{eq:Carlesonest} \mu_{\beta,\varphi}\big(Q(\tau,\varepsilon)\big) = \mathcal{O}\big(\varepsilon^{2^\alpha}\big) 
	\end{equation}
	for every Carleson square $Q(\tau,\varepsilon) = [1/2,1/2+\epsilon]\times[\tau-\varepsilon/2,\tau+\varepsilon/2]$. Moreover, $\mathcal{C}_\varphi$ is compact from $\mathcal{D}_\alpha$ to $\mathcal{D}_\beta$ if and only if 
	\[\mu_{\beta,\varphi}\big(Q(\tau,\varepsilon)\big) = o\big(\varepsilon^{2^\alpha}\big),\]
	as $\varepsilon \to 0^+$, uniformly for $\tau\in\mathbb R$. 
\end{lem}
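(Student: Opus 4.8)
The plan is to reduce both assertions to the Carleson measure theory already established in Lemmas~\ref{lem:Carlesonmeasures} and~\ref{lem:carlesonclassic}, by means of a single change-of-variables identity. First I would record that, because $\charac(\varphi)=0$, the Bohr lift intertwines composition with $\varphi$ and composition with $\Phi=\mathcal{B}\varphi$, exactly as in the proof of Theorem~\ref{thm:Hp}: for every Dirichlet polynomial $f$ one has $\mathcal{B}(f\circ\varphi)=f\circ\Phi$. Computing the $\mathcal{D}_\beta$-norm of $\mathcal{C}_\varphi f$ as an integral over $\DD^\infty$ (or over $\TT^\infty$ when $\beta=0$) and interpreting $\mu_{\beta,\varphi}$ as the pushforward of $\nu_\beta$ under $\Phi$, the change-of-variables formula gives the crucial identity
\[
\|\mathcal{C}_\varphi f\|_{\mathcal{D}_\beta}^2 = \int_{\DD^\infty} |f(\Phi(z))|^2\,d\nu_\beta(z) = \int_{\CC_{1/2}} |f(s)|^2\,d\mu_{\beta,\varphi}(s).
\]
Since $\varphi(\CC_0)$ is assumed to be a bounded subset of $\CC_{1/2}$, the measure $\mu_{\beta,\varphi}$ is a finite Borel measure with bounded support. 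The identity then says precisely that $\mathcal{C}_\varphi\colon\mathcal{D}_\alpha\to\mathcal{D}_\beta$ is bounded if and only if $\mu_{\beta,\varphi}$ is a Carleson measure for $\mathcal{D}_\alpha$.

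For the boundedness statement I would then merely chain the two lemmas. Because $\mu_{\beta,\varphi}$ has bounded support, Lemma~\ref{lem:Carlesonmeasures} shows that $\mu_{\beta,\varphi}$ is Carleson for $\mathcal{D}_\alpha$ if and only if it is Carleson for $D_{2^\alpha-1}(\CC_{1/2})$. Invoking Lemma~\ref{lem:carlesonclassic} with the Bergman parameter $2^\alpha-1$ in the role of $\beta$ converts the latter into the geometric estimate $\mu_{\beta,\varphi}(Q(\tau,\varepsilon))=\mathcal{O}(\varepsilon^{(2^\alpha-1)+1})=\mathcal{O}(\varepsilon^{2^\alpha})$, which is exactly \eqref{eq:Carlesonest}.

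For compactness, the key is to argue that $\mathcal{C}_\varphi$ is compact if and only if $\mu_{\beta,\varphi}$ is a \emph{vanishing} Carleson measure for $\mathcal{D}_\alpha$. Since $\mathcal{D}_\alpha$ is a reproducing kernel space, every bounded sequence is a normal family, so compactness of $\mathcal{C}_\varphi$ is equivalent to $\|\mathcal{C}_\varphi f_k\|_{\mathcal{D}_\beta}\to0$ for every sequence $\{f_k\}$ that is bounded in $\mathcal{D}_\alpha$ and tends to $0$ uniformly on compact subsets of $\CC_{1/2}$ (that is, for every weakly compact sequence in the sense used above). By the identity of the first paragraph this condition reads $\int_{\CC_{1/2}}|f_k|^2\,d\mu_{\beta,\varphi}\to0$, which is precisely the definition of a vanishing Carleson measure. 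Feeding this through the vanishing parts of Lemma~\ref{lem:Carlesonmeasures} and then Lemma~\ref{lem:carlesonclassic} yields $\mu_{\beta,\varphi}(Q(\tau,\varepsilon))=o(\varepsilon^{2^\alpha})$ uniformly in $\tau$, as claimed.

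The main obstacle lies not in chaining the lemmas but in making the change-of-variables identity and the well-definedness of $\mu_{\beta,\varphi}$ fully rigorous. For $\beta>0$ the map $\Phi$ is genuinely defined on $\DD^\infty$ by its absolutely convergent power series, and the hypothesis that $\varphi(\CC_0)$ is bounded guarantees that its image stays in a bounded subset of $\CC_{1/2}$, so the pushforward is a finite measure of bounded support and the identity follows from Fubini's theorem applied to Dirichlet polynomials $f$, which are dense in $\mathcal{D}_\alpha$. The delicate case is $\beta=0$: here the norm is computed on $\TT^\infty$ rather than $\DD^\infty$, so one must first establish that $\Phi$ admits radial boundary values $\nu_0$-almost everywhere on $\TT^\infty$ taking values in $\overline{\varphi(\CC_0)}$, and define $\mu_{0,\varphi}$ through these boundary values; it is exactly the boundedness of $\varphi(\CC_0)$ that makes this possible and keeps the support of $\mu_{0,\varphi}$ bounded. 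Once $\mu_{\beta,\varphi}$ is known to be a finite measure of bounded support, both the Carleson and the vanishing-Carleson characterizations become statements about the single quadratic form $f\mapsto\int_{\CC_{1/2}}|f|^2\,d\mu_{\beta,\varphi}$ and extend from polynomials to all of $\mathcal{D}_\alpha$ by density together with the normal-family argument.
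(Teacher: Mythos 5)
Your proposal is correct and follows essentially the same route as the paper: Bohr-lift the composition to get the pushforward identity, observe that $\mu_{\beta,\varphi}$ has bounded support, and then chain Lemma~\ref{lem:Carlesonmeasures} with Lemma~\ref{lem:carlesonclassic} (and their vanishing counterparts for compactness). The only points the paper makes explicit that you gloss over are the justification that $\supp(\mu_{\beta,\varphi})\subseteq\overline{\varphi(\mathbb{C}_0)}$ (via Kronecker's theorem and the maximum modulus principle on the polydisc, which is what controls $\Phi(\mathbb{D}^\infty)$ rather than just $\Phi$ restricted to the Bohr image of the line) and the disposal of the delicate $\beta=0$ case by citing \cite[Lem.~4.1]{QS15} instead of constructing the boundary-value measure from scratch.
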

\begin{proof}
	We begin with the proof of the boundedness criterion \eqref{eq:Carlesonest}. Assume at first that $\alpha,\beta>0$. Let $P$ be a Dirichlet polynomial. Since $c_0=0$, we observe as in the proof of Theorem \ref{thm:Hp} that $\mathcal B(P\circ\varphi)=P\circ\mathcal B\varphi$, so
	\begin{equation}
		\label{eq:Dalphaphi} \|\mathcal{C}_\varphi P\|_\beta^2 = \int_{\mathbb{D}^\infty} |P\left(\Phi(z)\right)|^2\,d\nu_\beta(z). 
	\end{equation}
	Now, since $\mu_{\beta,\varphi} = \nu_{\beta,\varphi}\circ \Phi^{-1}$ and since Dirichlet polynomials are dense in $\mathcal D_\alpha$, it is easy to deduce from \eqref{eq:Dalphaphi} that $\mathcal{C}_\varphi$ is bounded from $\mathcal{D}_\alpha$ to $\mathcal{D}_\beta$ if and only if
	\[\int_{\mathbb{C}_{1/2}} |f(s)|^2\,d\mu_{\beta,\varphi}(s) \ll \|f\|_{\mathcal{D}_\alpha}^2.\]
	Using Kronecker's theorem and the maximum modulus principle on the polydisc, we find that $\supp\big(\mu_{\beta,\varphi}\big) = \overline{\varphi(\mathbb{C}_0)}$. By assumption, $\varphi(\mathbb{C}_0)$ is a bounded subset of $\mathbb{C}_{1/2}$, so $\mu_{\beta,\varphi}$ has bounded support. Hence, by Lemma~\ref{lem:Carlesonmeasures} and Lemma~\ref{lem:carlesonclassic}, $\mu_{\beta,\varphi}$ is a Carleson measure for $\mathcal{D}_\alpha$ if and only if
	\[\mu_{\beta,\varphi}\big(Q(\tau,\varepsilon)\big) = \mathcal{O}\left(\varepsilon^{2^\alpha}\right).\]
	The argument for compactness follows by similar considerations. If $\alpha=0$, these arguments work line for line. If $\beta=0$, we appeal directly to \cite[Lem.~4.1]{QS15}. Clearly $\supp\big(\mu_{\beta,\varphi}\big) \subseteq \overline{\varphi(\mathbb{C}_0)}$, so the measure is still boundedly supported. The remaining deliberations apply directly. 
\end{proof}

This lemma can be combined with a compactness argument as in \cite[Lem.~6]{BB15}, to obtain the next result. But first, note that if $\varphi\in\mathcal{G}$ is a Dirichlet polynomial with $\charac(\varphi)=0$, its Bohr lift $\Phi=\mathcal{B}\varphi$ is always a polynomial of $d<\infty$ variables. We call $d$ the \emph{complex dimension} of $\varphi$ and write $d=\dim(\varphi)$.
\begin{cor}
	\label{cor:carleson} Let $\varphi\in\mathcal G$ be a Dirichlet polynomial with $\dim(\varphi)=d$ and Bohr lift $\Phi$. If for every $w\in\TT^d$ with $\mre\Phi(w)=1/2$ there exist a neighborhood $\mathcal U_w\ni w$ in $\overline{\DD^d}$, constants $C_w>0$ and $\kappa_w\geq2^\alpha$ such that, for every $\tau\in\RR$ and every $\veps>0$, $$\nu_\beta\big(\{z\in\mathcal U_w\colon \Phi(z)\in Q(\tau,\veps)\}\big)\leq C_w \veps^{\kappa_w},$$ then $\mathcal{C}_\varphi$ maps $\mathcal D_\alpha$ boundedly into $\mathcal D_\beta$. If moreover $\kappa_w>2^\alpha$ for every $w\in\TT^d$ with $\mre\Phi(w)=1/2$, then $\mathcal{C}_\varphi\colon \mathcal{D}_\alpha\to\mathcal{D}_\beta$ is compact. 
\end{cor}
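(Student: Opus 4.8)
The plan is to reduce the global Carleson condition \eqref{eq:Carlesonest} of Lemma~\ref{lem:carlesoncond} to the local estimates assumed in the hypothesis, by means of a compactness argument on the distinguished boundary $\TT^d$. Recall that, by Lemma~\ref{lem:carlesoncond}, it suffices to show that $\mu_{\beta,\varphi}\big(Q(\tau,\veps)\big) = \mathcal O\big(\veps^{2^\alpha}\big)$ uniformly in $\tau$, where $\mu_{\beta,\varphi} = \nu_\beta \circ \Phi^{-1}$ (and for compactness, that this is $o\big(\veps^{2^\alpha}\big)$). Since $\varphi$ is a Dirichlet polynomial, $\Phi$ is a polynomial on $\overline{\DD^d}$, hence continuous, and $\varphi(\CC_0)$ is automatically bounded, so the lemma applies.

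The key observation is that the mass of $\mu_{\beta,\varphi}$ in a small Carleson square $Q(\tau,\veps)$ can only come from points $z\in\overline{\DD^d}$ with $\mre\Phi(z)$ close to $1/2$. First I would argue that $\Phi^{-1}\big(\overline{\CC_{1/2}}\setminus \CC_{1/2}\big)\cap\overline{\DD^d}$ — the set where $\mre\Phi(z)=1/2$ — is contained in the \emph{distinguished boundary} $\TT^d$: indeed $\varphi\in\mathcal G$ with $\charac(\varphi)=0$ forces $\Phi(\DD^d)\subset\CC_{1/2}$, so by the maximum principle any point where $\mre\Phi$ attains its boundary value $1/2$ must lie on $\TT^d$. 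Call this boundary zero set $Z = \{w\in\TT^d : \mre\Phi(w)=1/2\}$, which is compact. By hypothesis, each $w\in Z$ admits a neighborhood $\mathcal U_w$ in $\overline{\DD^d}$ with the local bound $\nu_\beta\big(\{z\in\mathcal U_w : \Phi(z)\in Q(\tau,\veps)\}\big)\le C_w\veps^{\kappa_w}$ and $\kappa_w\ge 2^\alpha$. Extracting a finite subcover $\mathcal U_{w_1},\dots,\mathcal U_{w_N}$ of $Z$, I set $\mathcal U = \bigcup_{i=1}^N \mathcal U_{w_i}$, an open neighborhood of $Z$ in $\overline{\DD^d}$.

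It remains to control the contribution from the complement $\overline{\DD^d}\setminus\mathcal U$. On this compact set $\mre\Phi$ is continuous and bounded away from $1/2$ — say $\mre\Phi(z)\ge 1/2+\delta$ for some $\delta>0$ — so for $\veps<\delta$ there is a gap: $\Phi\big(\overline{\DD^d}\setminus\mathcal U\big)$ is disjoint from the thin strip $\{1/2\le\sigma\le 1/2+\veps\}$ that contains $Q(\tau,\veps)$. Hence $\nu_\beta\big(\{z\notin\mathcal U : \Phi(z)\in Q(\tau,\veps)\}\big)=0$ once $\veps$ is small enough, uniformly in $\tau$. Combining this with the finite subcover gives, for all small $\veps$ and every $\tau$,
\[
\mu_{\beta,\varphi}\big(Q(\tau,\veps)\big) \le \sum_{i=1}^N \nu_\beta\big(\{z\in\mathcal U_{w_i} : \Phi(z)\in Q(\tau,\veps)\}\big) \le \Big(\sum_{i=1}^N C_{w_i}\Big)\veps^{2^\alpha},
\]
using $\kappa_{w_i}\ge 2^\alpha$ and $\veps\le 1$ to replace each $\veps^{\kappa_{w_i}}$ by $\veps^{2^\alpha}$. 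This is precisely the boundedness criterion of Lemma~\ref{lem:carlesoncond}. For the compactness statement, if every $\kappa_w>2^\alpha$ then each summand is $\mathcal O\big(\veps^{\kappa_{w_i}}\big)=o\big(\veps^{2^\alpha}\big)$ uniformly in $\tau$, yielding $\mu_{\beta,\varphi}\big(Q(\tau,\veps)\big)=o\big(\veps^{2^\alpha}\big)$ and hence compactness by the second part of Lemma~\ref{lem:carlesoncond}.

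I expect the main obstacle to be the \emph{localization step}: carefully verifying that, for $\veps$ small enough, \emph{no} mass outside the finite neighborhood $\mathcal U$ of the boundary zero set $Z$ can fall into $Q(\tau,\veps)$, \emph{uniformly} in the translation parameter $\tau$. The subtlety is that $Q(\tau,\veps)$ moves vertically with $\tau$, so the gap argument must use only the real part $\mre\Phi$ (which is translation-independent) and the fact that $Q(\tau,\veps)$ is trapped in the $\tau$-independent horizontal strip $\{\,1/2\le\mre s\le 1/2+\veps\,\}$. Once this uniform separation is secured via compactness of $\overline{\DD^d}\setminus\mathcal U$, the remainder is the routine bookkeeping of patching finitely many local estimates, exactly in the spirit of \cite[Lem.~6]{BB15}.
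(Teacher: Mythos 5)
Your overall strategy --- reduce to the Carleson-box criterion of Lemma~\ref{lem:carlesoncond}, extract a finite subcover of the contact set on $\TT^d$, and kill the complement by a uniform gap in $\mre\Phi$ --- is exactly the compactness argument the paper has in mind (it only cites \cite[Lem.~6]{BB15} for it), and your handling of the $\tau$-uniformity is correct. The gap is in your localization step. You claim that $Z=\{z\in\overline{\DD^d}:\mre\Phi(z)=1/2\}$ is contained in the distinguished boundary $\TT^d$ ``by the maximum principle''. The maximum principle for the pluriharmonic function $u=\mre\Phi-1/2\geq0$ only tells you that its minimum \emph{value} is attained on $\TT^d$ and that $u$ cannot vanish on the open polydisc $\DD^d$ (else $\varphi$ would be constant); it does not exclude zeros at points of the topological boundary having some coordinates in $\DD$ and others on $\TT$, and for $d\geq2$ such zeros do occur. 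For instance, with $0<\delta<1/2$ the polynomial $\Phi(z_1,z_2)=3/2-z_2+\delta(1-z_2)^2z_1$ is the Bohr lift of a Dirichlet polynomial in $\mathcal G$ of complex dimension $2$: on $\TT^2$ one computes $\mre\Phi-1/2=2\sin^2(\theta_2/2)\bigl(1-2\delta\cos(\theta_1+\theta_2)\bigr)\geq0$, hence $\mre\Phi\geq1/2$ on $\overline{\DD^2}$ by the one-variable minimum principle applied coordinatewise, and $\mre\Phi>1/2$ on $\DD^2$ by Harnack; yet $\Phi(z_1,1)\equiv1/2$, so $Z$ contains the whole slice $\overline{\DD}\times\{1\}$ while $Z\cap\TT^2=\TT\times\{1\}$. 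The point $(0,1)$ lies at distance $1$ from $Z\cap\TT^2$, so no finite union of the (possibly very small) neighborhoods $\mathcal U_{w_i}$ covers $Z$, and on the compact complement $\overline{\DD^d}\setminus\bigcup_i\mathcal U_{w_i}$ the function $\mre\Phi$ is \emph{not} bounded away from $1/2$. Your assertion that this complement contributes no mass to $Q(\tau,\veps)$ for small $\veps$ therefore fails when $\beta>0$, where $\nu_\beta$ charges all of $\DD^d$. (For $\beta=0$ the measure lives on $\TT^d$ and your argument is complete; that is precisely the situation of \cite[Lem.~6]{BB15}.)

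To close the gap one must deal with these interior contact points. The minimum principle shows that any zero of $u$ off $\TT^d$ forces $\Phi$ to be constant, equal to $1/2+ic$, on an entire slice $\DD^k\times\{w''\}$ with $w''\in\TT^{d-k}$, so the extra contact set is a finite union of such slices; one then has to estimate $\nu_\beta\bigl(\{z:\Phi(z)\in Q(\tau,\veps)\}\bigr)$ near the interior of each slice separately --- for example by exploiting the product structure of $\nu_\beta$ and comparing with the hypothesis at the $\TT^d$-points $(w',w'')$ of the same slice --- or one must verify the local estimate at every contact point of $\overline{\DD^d}$, not only those on $\TT^d$. In the applications of the corollary in Sections~\ref{sec:poly} and~\ref{sec:hp} the contact set genuinely is contained in $\TT^d$, so the issue is repairable there, but as written your argument does not establish the statement in the generality claimed.
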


\subsection{Measures of some sets in $\DD^d$} Corollary~\ref{cor:carleson} indicates that we need to estimate the measure of some sets in $\DD^{d}$. Let us collect some estimates for some particular subsets of $\mathbb{D}^d$. To simplify the computations, we will replace the measure $\nu_\beta$ with the new measure $\widetilde{\nu}_\beta$ associated to $\widetilde{m}_\beta$ as defined in \eqref{eq:mbeta}. Now, if $\dim(f)=d$, then clearly
\[\int_{\mathbb{D}^d} |\mathcal{B}f(z)|^2\,d\widetilde{\nu}_\beta(z) \asymp_{d,\beta} \int_{\mathbb{D}^d} |\mathcal{B}f(z)|^2\,d\nu_\beta(z).\]
In particular, we can replace $\nu_\beta$ by $\widetilde{\nu}_\beta$ in Corollary \ref{cor:carleson}. We should also point out that for $\beta=0$, we do not change the measure and adopt the convention $\nu_0 = \widetilde{\nu_0}$. 

For $\delta,\veps>0$, let $S(\delta,\varepsilon)=\left\{z=(1-\rho)e^{i\theta}\in\DD\,:\, 0\leq\rho\leq\delta,\, |\theta|\leq\varepsilon\right\}.$ As usual, $B(w,r)$ will denote the open ball centered at $w\in\mathbb{C}$ with radius $r>0$. Geometric considerations show that there exist absolute constants $c,C>0$ such that, for every $\veps>0$ and every $w\in\TT$, we have 
\begin{align}
	S\left(c\veps,c\veps^{1/2}\right)&\subset \{z\in\DD\,:\, \mre(1-z)<\veps\} \subset S\left(C\veps,C\veps^{1/2}\right)	 \label{eq:carlesonwindow2} \\
	wS(c\veps,c\veps)&\subset B(w,\veps)\cap\DD\subset wS(C\veps,C\veps). \label{eq:carlesonwindow1} 
\end{align}
The following lemmas are inspired by \cite{Bayart11}, and for the sake of clarity we include a brief account of their proofs. 
\begin{lem}
	\label{lem:volume0} For any $\beta>0$, $\widetilde{m}_\beta\big(S(\delta,\veps)\big)\asymp_\beta \delta^{\beta}\veps$. 
\end{lem}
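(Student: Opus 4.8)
The plan is to evaluate the integral $\widetilde{m}_\beta\big(S(\delta,\veps)\big) = \int_{S(\delta,\veps)} \beta\,(1-|z|^2)^{\beta-1}\, dm_1(z)$ directly, exploiting that the region $S(\delta,\veps)$ is cut out by elementary inequalities on the modulus and the argument of $z$. Writing $z = re^{i\theta}$ and recalling that the normalized area measure on $\DD$ is $dm_1(z) = \pi^{-1} r\, dr\, d\theta$, the set $S(\delta,\veps)$ corresponds precisely to $r \in [1-\delta,\,1]$ and $\theta \in [-\veps,\,\veps]$. Hence the integral factors as a product of an angular and a radial integral, and the whole task reduces to computing the latter.

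The angular integral is immediate: for $\veps \leq \pi$ it contributes a factor $2\veps$. For the radial part I would substitute $u = 1-r^2$ (so that $r\,dr = -\tfrac12\,du$), which converts $\int_{1-\delta}^1 (1-r^2)^{\beta-1} r\, dr$ into $\tfrac12\int_0^{\delta(2-\delta)} u^{\beta-1}\, du = \tfrac{1}{2\beta}\,[\delta(2-\delta)]^\beta$. Assembling the pieces, the factor $\beta$ from the definition of $\widetilde{m}_\beta$ cancels against the $\tfrac{1}{2\beta}$, and one arrives at the exact value $\widetilde{m}_\beta\big(S(\delta,\veps)\big) = \pi^{-1}\,\veps\,[\delta(2-\delta)]^\beta$.

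Finally, to pass from this closed form to the stated asymptotic $\asymp_\beta \delta^\beta\veps$, I would restrict to the relevant regime of small parameters, in particular $0 < \delta \leq 1$, where $2-\delta$ lies in $[1,2)$ and therefore $[\delta(2-\delta)]^\beta \asymp_\beta \delta^\beta$ with implied constants depending only on $\beta$. Since the computation is essentially exact, I do not anticipate a genuine obstacle; the only points requiring a modicum of care are the normalization of $m_1$ (which fixes the harmless $\pi^{-1}$ factor, irrelevant for the comparison $\asymp$) and making explicit the admissible range of parameters: for $0 < \delta \leq 1$ the factor $(2-\delta)^\beta$ lies in $[1,2^\beta]$ and is thus absorbed into the $\beta$-dependent implied constants, while the reduction of the angular integral to $2\veps$ presupposes $\veps \leq \pi$, which is exactly the regime in which the lemma will be applied.
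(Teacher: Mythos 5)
Your computation is correct and is precisely the "integration in polar coordinates" that the paper's one-line proof refers to: the exact value $\pi^{-1}\veps\,[\delta(2-\delta)]^\beta$ immediately gives the stated two-sided bound for $0<\delta\leq 1$ and $\veps\leq\pi$. No further comment is needed.
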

\begin{proof}
	This follows from an integration in polar coordinates. 
\end{proof}
\begin{lem}
	\label{lem:volume1} For any $\beta>0$, $\widetilde{m}_\beta\big(\{z\in\DD\,:\, \mre(1-z)<\veps\}\big)\asymp_\beta\veps^{\beta+\frac 12}.$ 
\end{lem}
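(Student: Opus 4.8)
The plan is to sandwich the set $\{z\in\DD : \mre(1-z)<\veps\}$ between two of the sectors $S(\delta,\veps')$ for which Lemma~\ref{lem:volume0} already supplies a sharp volume estimate, and then let that lemma do the work. The geometric inclusions in \eqref{eq:carlesonwindow2} are precisely designed for this: they assert that there are absolute constants $c,C>0$ with
\[S\left(c\veps,c\veps^{1/2}\right)\subset \{z\in\DD : \mre(1-z)<\veps\}\subset S\left(C\veps,C\veps^{1/2}\right).\]
The intuition behind these inclusions is that, writing $z=(1-\rho)e^{i\theta}$ near the point $1$, one has $\mre(1-z)=1-(1-\rho)\cos\theta\approx \rho+\theta^2/2$, so the condition $\mre(1-z)<\veps$ is comparable to simultaneously requiring $\rho\lesssim\veps$ and $|\theta|\lesssim\veps^{1/2}$; this is exactly the shape of the sector $S(\veps,\veps^{1/2})$.

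First I would invoke monotonicity of the measure $\widetilde{m}_\beta$ across the two inclusions above to obtain
\[\widetilde{m}_\beta\big(S(c\veps,c\veps^{1/2})\big)\leq \widetilde{m}_\beta\big(\{z\in\DD : \mre(1-z)<\veps\}\big)\leq \widetilde{m}_\beta\big(S(C\veps,C\veps^{1/2})\big).\]
Next I would apply Lemma~\ref{lem:volume0}, which gives $\widetilde{m}_\beta(S(\delta,\veps'))\asymp_\beta \delta^\beta\veps'$, to each of the outer terms with the parameter choices $\delta=c\veps$, $\veps'=c\veps^{1/2}$ on the left and $\delta=C\veps$, $\veps'=C\veps^{1/2}$ on the right. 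This yields
\[\widetilde{m}_\beta\big(S(c\veps,c\veps^{1/2})\big)\asymp_\beta (c\veps)^\beta\, c\veps^{1/2}\asymp_\beta \veps^{\beta+\frac12},\]
and an identical computation for the right-hand sector with $C$ in place of $c$. Since both bounding quantities are comparable to $\veps^{\beta+\frac12}$ (the constants $c,C$ being absorbed into the implied constant, which depends only on $\beta$), the squeeze gives the claimed two-sided estimate $\widetilde{m}_\beta(\{z\in\DD:\mre(1-z)<\veps\})\asymp_\beta\veps^{\beta+\frac12}$.

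There is no serious obstacle here, since the genuinely geometric content has already been packaged into \eqref{eq:carlesonwindow2} and the volume computation into Lemma~\ref{lem:volume0}; the argument is a one-line comparison. The only point requiring minor care is bookkeeping of the absolute constants $c,C$: one must check that the factors $c^{\beta+1}$ and $C^{\beta+1}$ produced by Lemma~\ref{lem:volume0} are harmless, which they are because they are absolute (hence depend at most on $\beta$), so they merge into the $\asymp_\beta$ notation without affecting the exponent $\beta+\frac12$. Were \eqref{eq:carlesonwindow2} not already available, the crux would instead be establishing those inclusions via the expansion of $\mre(1-z)$ indicated above, but as it stands that step may be cited directly.
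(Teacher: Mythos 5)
Your proof is correct and follows exactly the route of the paper, which likewise deduces the lemma from the inclusions \eqref{eq:carlesonwindow2} combined with the volume estimate of Lemma~\ref{lem:volume0}. The parameter bookkeeping (taking $\delta=c\veps$ and angular width $c\veps^{1/2}$, yielding the harmless constant $c^{\beta+1}$) is handled correctly.
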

\begin{proof}
	The result follows from Lemma \ref{lem:volume0} and (\ref{eq:carlesonwindow2}). 
\end{proof}
\begin{lem}
	\label{lem:volume2} Let $\beta>0$ and $v\in\CC$. Then
	\[\widetilde{m}_\beta\big( \{z\in\DD \,:\, \mre(1-z)<\veps,\,|\mim(v-z)|<\veps\}\big)\ll_\beta \veps^{1+\beta}.\]
\end{lem}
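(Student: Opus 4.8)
The plan is to bound the region
\[
R := \{z\in\DD \,:\, \mre(1-z)<\veps,\ |\mim(v-z)|<\veps\}
\]
by combining the ``radial'' constraint $\mre(1-z)<\veps$, which forces $z$ to lie near the boundary point $1\in\TT$, with the ``vertical'' constraint $|\mim(v-z)|<\veps$, which restricts $\mim z$ to an interval of length $2\veps$. The key observation is that the previous lemmas already handle the radial direction: by \eqref{eq:carlesonwindow2}, the set $\{z\in\DD : \mre(1-z)<\veps\}$ is contained in the sector $S(C\veps,C\veps^{1/2})$ centered at $1\in\TT$. So I would first reduce to estimating $\widetilde m_\beta$ of the intersection $S(C\veps,C\veps^{1/2}) \cap \{|\mim(v-z)|<\veps\}$.

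The main point is that the vertical strip $\{|\mim(v-z)|<\veps\}$ cuts the sector down by a factor of $\veps^{1/2}$ compared to the full sector estimate $\widetilde m_\beta(S(C\veps,C\veps^{1/2}))\asymp \veps^{\beta+1/2}$ coming from Lemma~\ref{lem:volume0}. Concretely, I would parametrize points of the sector as $z=(1-\rho)e^{i\theta}$ with $0\le\rho\le C\veps$ and $|\theta|\le C\veps^{1/2}$, and compute $\mim z = (1-\rho)\sin\theta$. For $z$ in the sector, $1-\rho\asymp 1$ and $\sin\theta\asymp\theta$, so $\mim z \asymp \theta$; thus the constraint $|\mim(v-z)|<\veps$ restricts $\theta$ to lie in an interval of length $\asymp\veps$ (whose location depends on $\mim v$, but whose length does not). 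Carrying out the integration in polar coordinates,
\[
\widetilde m_\beta(R) \ll_\beta \int_{\{|\theta-\theta_0|\ll\veps\}}\int_0^{C\veps} \beta\big(1-(1-\rho)^2\big)^{\beta-1}(1-\rho)\,d\rho\,d\theta,
\]
where the inner integral over $\rho$ contributes $\asymp\veps^{\beta}$ exactly as in Lemma~\ref{lem:volume0}, while the outer $\theta$-integral now contributes only $\asymp\veps$ rather than $\asymp\veps^{1/2}$. This yields the claimed bound $\widetilde m_\beta(R)\ll_\beta \veps^{1+\beta}$.

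The step requiring the most care is justifying that $\mim z\asymp\theta$ uniformly on the sector, so that the vertical constraint really does confine $\theta$ to an interval of length $O(\veps)$: one must check that the angular width $|\theta|\le C\veps^{1/2}$ is small enough that $\sin\theta$ stays comparable to $\theta$ with absolute constants, and that the radial factor $1-\rho$ stays bounded below. Both are routine once $\veps$ is small, and the case of large $\veps$ is trivial since $\widetilde m_\beta$ is a probability measure. One subtlety worth noting is that the estimate is one-sided (only an upper bound $\ll_\beta$ is asserted), which is exactly what is needed for the Carleson square estimates feeding into Corollary~\ref{cor:carleson}, and which makes the use of the outer containment in \eqref{eq:carlesonwindow2} legitimate.
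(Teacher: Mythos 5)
Your proof is correct and follows exactly the route the paper intends: the paper's own proof of Lemma~\ref{lem:volume2} is the one-line remark ``this follows again from an integration in polar coordinates,'' and your argument simply supplies the details of that computation, using the containment \eqref{eq:carlesonwindow2} to control the radial variable and the constraint $|\mim(v-z)|<\veps$ to confine $\theta$ to an interval of length $O(\veps)$. The points you flag as needing care (uniformity of $\mim z\asymp\theta$ on the sector, triviality for large $\veps$) are the right ones and are handled adequately.
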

\begin{proof}
	This follows again from an integration in polar coordinates.
\end{proof}
\begin{lem}
	\label{lem:volume3} Let $\beta>0$. There exists $c>0$ such that, for any $v\in\CC$ satisfying
	\[|\mre(v)-1|\leq c\veps\qquad\text{and}\qquad|\mim(v)|\leq (c\veps)^{1/2},\]
	then
	\[\widetilde{m}_\beta \big( \{z\in\DD\,:\, \mre(1-z)<\veps,\,|v-z|<\veps\}\big)\asymp_\beta \veps^{1+\beta}.\]
\end{lem}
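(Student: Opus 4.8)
The plan is to establish the lower bound to complement Lemma~\ref{lem:volume2}, which already furnishes the upper bound $\widetilde{m}_\beta(\{z\in\DD:\mre(1-z)<\veps,\,|v-z|<\veps\})\ll_\beta\veps^{1+\beta}$ (the region in the present lemma is contained in the region of Lemma~\ref{lem:volume2}, since $|v-z|<\veps$ forces $|\mim(v-z)|<\veps$). Thus the content is the reverse inequality $\widetilde{m}_\beta(\cdots)\gg_\beta\veps^{1+\beta}$ under the stated smallness hypotheses on $v$. The geometric idea is that when $v$ lies within distance $c\veps$ of $1$ in the real direction and within $(c\veps)^{1/2}$ in the imaginary direction, the ball $B(v,\veps)$ and the Carleson-type window $\{\mre(1-z)<\veps\}$ overlap in a region that still contains a full sector of the form $S(c'\veps,c'\veps)$ near the point $1\in\TT$, and Lemma~\ref{lem:volume0} gives $\widetilde{m}_\beta(S(c'\veps,c'\veps))\asymp_\beta\veps^{1+\beta}$.

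First I would fix $c>0$ small (to be determined) and parametrize points near $1$ by $z=(1-\rho)e^{i\theta}$ with $0\le\rho\le\delta$ and $|\theta|\le\eta$, where $\delta,\eta$ are comparable to $\veps$. For such $z$ one has $\mre(1-z)=1-(1-\rho)\cos\theta\approx\rho+\theta^2/2$, so choosing $\rho\le\veps/4$ and $|\theta|\le(\veps/2)^{1/2}$ — actually $|\theta|\le a\veps$ for a suitable small constant $a$ suffices and keeps us inside a sector — guarantees $\mre(1-z)<\veps$. Simultaneously I must ensure $|v-z|<\veps$. Writing the hypotheses as $|\mre(v)-1|\le c\veps$ and $|\mim(v)|\le(c\veps)^{1/2}$, I would estimate $|v-z|\le|\mre(v)-\mre(z)|+|\mim(v)-\mim(z)|$; here $\mre(z)\approx1-\rho$ and $\mim(z)\approx\theta$, so $|\mre(v)-\mre(z)|\le c\veps+\rho$ and $|\mim(v)-\mim(z)|\le(c\veps)^{1/2}+|\theta|$. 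This is where the hypothesis on the imaginary part being of order $\veps^{1/2}$ is essential: to absorb it I should not restrict $\theta$ to be of order $\veps$ but rather center the admissible $\theta$-interval at $\mim(v)$, taking $|\theta-\mim(v)|\le b\veps$ for small $b$, so that $|\mim(v)-\mim(z)|\lesssim\veps$ while $|\theta|$ itself stays bounded by $(c\veps)^{1/2}+b\veps$, which still keeps $\theta^2/2\le\veps/4$ after shrinking $c$.

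Concretely, I would exhibit an explicit sub-sector: choosing the constants so that the set
\[
E=\left\{(1-\rho)e^{i\theta}:\ 0\le\rho\le \tfrac{\veps}{4},\ |\theta-\mim(v)|\le b\veps\right\}
\]
is contained in $\{z\in\DD:\mre(1-z)<\veps,\ |v-z|<\veps\}$, after verifying the two inclusions by the above Taylor estimates and by fixing $c$ small enough (depending on the absolute $b$) that both $\mre(1-z)<\veps$ and $|v-z|<\veps$ hold throughout $E$. Since $E$ is (up to a harmless rotation by $e^{i\mim(v)}$, which preserves $\widetilde{m}_\beta$ as $\widetilde{m}_\beta$ is rotation-invariant) a sector of radial depth $\asymp\veps$ and angular width $\asymp\veps$, Lemma~\ref{lem:volume0} gives $\widetilde{m}_\beta(E)\asymp_\beta(\veps/4)^\beta(2b\veps)\asymp_\beta\veps^{1+\beta}$, yielding the lower bound. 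Combined with the upper bound from Lemma~\ref{lem:volume2}, this proves the claimed two-sided estimate.

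The main obstacle, and the step requiring the most care, is the simultaneous calibration of the constants: one must check that a single choice of $c$ makes $E$ fit inside \emph{both} constraint regions while keeping the angular width of $E$ of order $\veps$ (not smaller), so that the resulting measure is genuinely $\asymp\veps^{1+\beta}$ rather than a smaller power. The delicate point is handling the imaginary part of $v$, of size up to $(c\veps)^{1/2}$, by re-centering the angular window at $\mim(v)$ rather than at $0$; I would track the quadratic term $\theta^2/2$ carefully to confirm that the enlarged $|\theta|\asymp\veps^{1/2}$ does not violate the $\mre(1-z)<\veps$ constraint, which is exactly what forces $\rho$ to be taken somewhat smaller (e.g.\ $\le\veps/4$) and $c$ correspondingly small.
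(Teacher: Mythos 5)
Your proposal is correct and follows essentially the same route as the paper: both prove the lower bound by locating a rotated sector of radial depth and angular width $\asymp\veps$ inside the set and invoking Lemma~\ref{lem:volume0} together with the rotation invariance of $\widetilde m_\beta$ (the paper packages this as the inclusion $B(v/|v|,\veps/4)\subset B(v,\veps/2)\subset\{\mre(1-z)<\veps\}$ followed by \eqref{eq:carlesonwindow1}, while you verify the sector inclusions directly by Taylor expansion). The upper bound via Lemma~\ref{lem:volume2} is identical in both.
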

\begin{proof}
	The upper bound is Lemma \ref{lem:volume2}. For the lower bound, observe that, provided $c\in(0,1/2)$, then $\{z\in\DD\,:\, |z-v|<\veps/2\}\subset \{z\in\DD\,:\, \mre(1-z)<\veps\}$. Hence, we just need to minorize $\widetilde{m}_\beta\big(B(v,\veps/2)\cap\mathbb D\big)$. Now, it is easy to check that upon the conditions $c\in(0,1/2)$ and $\veps\in(0,1)$, $$-8c\veps\leq 1-|v|\leq 8c\veps.$$ Writing
	\[|z-v|\leq\left|z-\frac{v}{|v|}\right|+\big|1-|v|\big|\]
	we get that $B(v/|v|,\veps/4)\subset B(v,\veps/2)$ provided $c<1/32$. We finish the proof as in Lemma \ref{lem:volume2}. 
\end{proof}
\begin{rem}
	When $\delta=\veps$, the sets $S(\delta,\veps)$ are the classical Carleson windows of the disc. However, we are required to handle inhomogeneous Carleson windows in what follows. 
\end{rem}

\section{Composition operators with polynomial symbols on $\mathcal{D}_\alpha$} \label{sec:poly} 
Let us consider a polynomial symbol in $\mathcal{G}$ of characteristic $c_0=0$, say $\varphi(s)=\sum_{n=1}^N c_n n^{-s}$. We are only interested in symbols having \emph{unrestricted range}, which means that $\varphi(\mathbb C_0)$ is not contained in $\mathbb C_{1/2+\delta}$, for any $\delta>0$. If the symbol has restricted range, it is trivial to deduce from \cite[Thm.~1]{BB14} that $\mathcal{C}_\varphi$ maps $\mathcal{D}_\alpha$ compactly into $\mathcal{D}_\beta$, for any choice of $\alpha,\beta\geq0$.

Let us now look at the Bohr lift of $\varphi$, denoted $\Phi$. As in the previous section, we will let $\dim(\varphi)$ denote the \emph{complex dimension} of $\varphi$, which is equal to the number of variables in the polynomial $\Phi(z_1,\,\ldots,\,z_d)$. Now, the \emph{degree} of $\varphi$ will be the degree of $\Phi$, and we will write $\deg(\varphi)$. When the complex dimension is big and the degree is small, we can improve $\beta=2^\alpha-1$ from the main result of \cite{BB14} substantially.
\begin{thm}
	\label{thm:dirichletpolynomial} Fix $\alpha>0$ and consider a Dirichlet polynomial $\varphi$ in $\mathcal G$ with unrestricted range. 
	\begin{itemize}
		\item[(i)] If $d=\dim(\varphi)\geq2$ and $\deg(\varphi)\in\{1,2\}$, then $\mathcal{C}_\varphi$ maps $\mathcal{D}_\alpha$ boundedly into $\mathcal D_\beta$ for some $\beta<2^\alpha-1$. More precisely, $\mathcal{C}_\varphi \colon \mathcal D_\alpha \to \mathcal D_{(2^\alpha-1)/d}$ is bounded. 
	\end{itemize}
	The result is optimal in the following sense. 
	\begin{itemize}
		\item[(ii)] If $\dim(\varphi)=1$, then $\mathcal{C}_\varphi\colon\mathcal{D}_\alpha\to\mathcal{D}_\beta$ is not bounded for any $\beta<2^{\alpha}-1$. 
		\item[(iii)] There are polynomials $\varphi\in\mathcal G$ of any complex dimension and with arbitrary $\deg(\varphi)\geq 3$ for which $\mathcal{C}_\varphi$ is not bounded from $\mathcal{D}_\alpha$ to $\mathcal{D}_\beta$ for any $\beta<2^{\alpha}-1$. 
	\end{itemize}
\end{thm}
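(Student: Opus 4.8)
The three parts all run through the Carleson--measure machinery of Section~\ref{sec:carleson}, and they are organised around a single structural dichotomy: if $\deg(\varphi)\le 2$ then a boundary contact point of $\Phi=\mathcal B\varphi$ is never degenerate, whereas for $\deg(\varphi)\ge 3$ one can force a whole coordinate slice to collapse onto the line $\mre s=1/2$. The plan is to apply Corollary~\ref{cor:carleson} with $\beta=(2^\alpha-1)/d$ for (i), and the necessary half of Lemma~\ref{lem:carlesoncond} for (ii) and (iii). The key observation is that when $\deg(\varphi)\le2$, at every contact point $w\in\TT^d$ (that is, $\mre\Phi(w)=1/2$) one has $\partial_{z_j}\Phi(w)\ne0$ for each $j$: freezing all variables but $z_j$ at $w$ gives a one--variable polynomial $g$ of degree $\le2$ with $g(\DD)\subset\overline{\CC_{1/2}}$ and $g(w_j)\in\partial\CC_{1/2}$, and if $g'(w_j)=0$ then $g(z)-g(w_j)=\tfrac12 g''(w_j)(z-w_j)^2$; since $(z-w_j)^2$ sweeps out all arguments as $z$ runs over a neighbourhood of $w_j$ in $\DD$, this would push $\mre g$ below $1/2$, a contradiction.

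For (i), fix a contact point $w$ and write $z_j=(1-r_j)w_je^{i\phi_j}$. From $\partial_{z_j}\Phi(w)\ne0$ and the fact that $\mre\Phi-1/2\ge0$ attains its minimum $0$ at $w$, the radial derivatives $\partial_{r_j}(\mre\Phi)(w)$ are strictly positive, so after shrinking a neighbourhood $\mathcal U_w$ to absorb the angular and mixed terms one gets $\mre\Phi(z)-1/2\gg\sum_j r_j$. Moreover the minimality condition forces $w_j\partial_{z_j}\Phi(w)\in\RR$, whence $\partial_{\phi_j}(\mim\Phi)(w)=w_j\partial_{z_j}\Phi(w)\in\RR\setminus\{0\}$ and $\nabla_\phi(\mim\Phi)(w)\ne0$. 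Using $d\widetilde\nu_\beta\asymp\prod_j r_j^{\beta-1}\,dr_j\,d\phi_j$ near the boundary, I estimate the measure of $E=\{z\in\mathcal U_w:\mre\Phi-1/2<\veps,\ |\mim\Phi-\tau|<\veps/2\}$: the radial bound forces $r_j\ll\veps$ for every $j$, while the nonvanishing of $\nabla_\phi\mim\Phi$ makes the $\phi$--slab $\{|\mim\Phi-\tau|<\veps/2\}$ have Lebesgue measure $\ll\veps$, uniformly in $r$. By Fubini,
\[
\widetilde\nu_\beta(E)\le\Big(\int_{\{r_j\ll\veps\}}\prod_{j=1}^d r_j^{\beta-1}\,dr_j\Big)\cdot\sup_{r}\int_{\{|\mim\Phi-\tau|<\veps/2\}}\prod_{j=1}^d d\phi_j\ll \veps^{d\beta}\cdot\veps=\veps^{d\beta+1}.
\]
With $\beta=(2^\alpha-1)/d$ this is $\veps^{2^\alpha}$, so $\kappa_w=2^\alpha$ and Corollary~\ref{cor:carleson} gives boundedness of $\mathcal C_\varphi\colon\mathcal D_\alpha\to\mathcal D_{(2^\alpha-1)/d}$; since $d\ge2$ we have $(2^\alpha-1)/d<2^\alpha-1$.

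For (ii), $\Phi$ is a one--variable polynomial which by the structural fact is conformal at a contact point $w\in\TT$; hence, with $\tau_0=\mim\Phi(w)$, the set $\Phi^{-1}\big(Q(\tau_0,\veps)\big)$ contains (after a rotation reducing to $w=1$) a set of the form $\{z:\mre(1-z)<\veps,\ |z-v|<c\veps\}$, so Lemma~\ref{lem:volume3} yields $\mu_{\beta,\varphi}(Q(\tau_0,\veps))\gg\veps^{1+\beta}$; the necessary condition $\mu_{\beta,\varphi}(Q(\tau,\veps))=\mathcal O(\veps^{2^\alpha})$ of Lemma~\ref{lem:carlesoncond} then forces $1+\beta\ge2^\alpha$. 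For (iii), I exhibit, for any $d$ and any target degree $D\ge3$, the symbol
\[
\Phi(z)=\tfrac32-z_1-c(1-z_1)^{D-1}z_2-\sum_{j=3}^{d}c_j(1-z_1)^2z_j,\qquad c,c_j>0\ \text{small}.
\]
Using $1-\mre z_1\ge\tfrac12|1-z_1|^2$ one gets $\mre\Phi-\tfrac12\ge|1-z_1|^2\big(\tfrac12-\sum_{j\ge3}c_j-c\,2^{\,D-3}\big)\ge0$, so $\Phi\in\mathcal G$, with dimension $d$ and degree $D$. Every term but $-z_1$ carries a factor $(1-z_1)^2$, so $\Phi\equiv1/2$ on the slice $z_1=1$ and $\Phi-1/2=(1-z_1)\big(1+O(1-z_1)\big)$ independently of $z_2,\dots,z_d$ near it; consequently $\Phi^{-1}\big(Q(0,\veps)\big)\supset\{|1-z_1|\ll\veps\}\times\DD^{d-1}$, whose $\widetilde\nu_\beta$--measure is $\gg\veps^{1+\beta}$ by Lemma~\ref{lem:volume3}, forcing again $\beta\ge2^\alpha-1$.

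The main obstacle is the structural dichotomy itself, and in particular the passage from the pointwise nonvanishing $\partial_{z_j}\Phi(w)\ne0$ to the clean \emph{sum} lower bound $\mre\Phi-1/2\gg\sum_j(1-|z_j|)$: one must check that for $\deg\le2$ the angular Hessian and the mixed radial--angular terms cannot consume the positive linear radial part on a fixed neighbourhood, and that this holds uniformly along the (possibly positive--dimensional) contact set, so that the compactness argument behind Corollary~\ref{cor:carleson} applies. The complementary delicate point, in (iii), is to keep the constructed symbols inside $\mathcal G$ while collapsing the slice $z_1=1$; the balance between the collapse and the constraint $\mre\Phi\ge1/2$ is precisely what makes the factor $(1-z_1)^2$, hence $\deg\ge3$, unavoidable, quantifying the failure of (i) at degree $3$.
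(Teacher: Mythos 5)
Your overall architecture — Carleson measures via Corollary~\ref{cor:carleson}, polar coordinates and Fubini for the upper bound in (i), conformality at the contact point for (ii), and a symbol of the form $1/2+(1-z_1)+(1-z_1)^2\cdot(\text{junk})$ for (iii) — is the same as the paper's, and parts (ii) and (iii) are essentially correct as written (the paper uses $S(\veps,\veps)\subset\Phi^{-1}(Q(0,c\veps))$ and Lemma~\ref{lem:volume0} where you use Lemma~\ref{lem:volume3}, a cosmetic difference; your explicit degree-$D$ symbol plays the role of the paper's $\delta(1-z_1)^2\Psi(z)$).

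There is, however, a genuine gap in your proof of the structural claim $\partial_{z_j}\Phi(w)\neq0$ at every contact point, which is the load-bearing step of part (i). Your one-variable argument freezes all coordinates but $z_j$ at $w$ and argues: if $g'(w_j)=0$ then $g(z)-g(w_j)=\tfrac12 g''(w_j)(z-w_j)^2$ sweeps out all arguments, contradiction. This only yields a contradiction when $g''(w_j)\neq0$. Since $\deg\Phi\le2$, the remaining possibility is $g'(w_j)=g''(w_j)=0$, i.e.\ $a_j=b_j=0$ in the expansion $\Phi=\tfrac12+\sum a_j(1-z_j)+\sum b_j(1-z_j)^2+\sum_{j<k}c_{j,k}(1-z_j)(1-z_k)$: the variable $z_j$ then enters \emph{only} through the bilinear cross terms $c_{j,k}(1-z_j)(1-z_k)$, the restriction $g$ is constant, and no contradiction arises from the single-variable slice. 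Since $\dim(\varphi)=d$ forces some $c_{j,k}\neq0$ in that case, you must rule this configuration out, and that requires a genuinely two-variable argument; the paper does it by expanding $\mre\Phi(e^{i\theta_j},e^{i\theta_k})$ and $\mre\Phi((1-\rho_j),e^{i\theta_k})$ to second order and choosing scalings $\theta_j=\delta$, $\theta_k=\delta^2$ to conclude first $\mre(c_{j,k})=0$ and then $\mim(c_{j,k})=0$, contradicting $c_{j,k}\neq0$. Without this case your radial bound $r_j\ll\veps$ fails for the degenerate index $j$ and the exponent $d\beta+1$ is not reached. The rest of your part (i) (nonnegativity of the pure angular part giving $\mre\Phi-1/2\gg\sum_j r_j$, the $\phi$-slab of measure $\ll\veps$ from $\nabla_\phi\mim\Phi(w)\neq0$, and the Fubini estimate $\veps^{d\beta+1}=\veps^{2^\alpha}$) matches the paper and is sound once the structural claim is fully established.
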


From the proof of Theorem~\ref{thm:dirichletpolynomial} (and Corollary~\ref{cor:carleson}) it is possible to deduce the following result regarding compactness. However, before we state the result, let us stress that the inclusion $\mathcal{D}_\alpha\subset\mathcal{D}_\beta$ is \emph{not} compact for $\alpha<\beta$. To realize this one needs only consider the weakly compact sequence generated by the prime numbers, $\{p_j^{-s}\}_{j\geq1}$, since $d(p_j)=2$.
\begin{cor}
	\label{cor:compact} Fix $\alpha>0$ and consider a Dirichlet polynomial $\varphi$ in $\mathcal G$ with unrestricted range.
	\begin{itemize}
		\item[(i)] If $\dim(\varphi)\geq2$ and $\deg(\varphi)\in\{1,2\}$, then $\mathcal{C}_\varphi\colon\mathcal{D}_\alpha\to\mathcal{D}_{2^\alpha-1}$ is compact. 
	\end{itemize}
	The result is optimal in the following sense. 
	\begin{itemize}
		\item[(ii)] If $\dim(\varphi)=1$, then $\mathcal{C}_\varphi\colon\mathcal{D}_\alpha\to\mathcal{D}_{2^\alpha-1}$ is never compact. 
		\item[(iii)] There are polynomials $\varphi\in\mathcal G$ of any complex dimension and with arbitrary $\deg(\varphi)\geq 3$ for which $\mathcal{C}_\varphi\colon\mathcal{D}_\alpha\to\mathcal{D}_{2^\alpha-1}$ is not compact. 
	\end{itemize}
\end{cor}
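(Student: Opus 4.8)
The plan is to read all three statements through the Carleson-measure description of compactness furnished by Lemma~\ref{lem:carlesoncond} (equivalently its localized form, Corollary~\ref{cor:carleson}), always taking $\beta=2^\alpha-1$. Writing $\Phi=\mathcal B\varphi$ for the Bohr lift, compactness of $\mathcal C_\varphi\colon\mathcal D_\alpha\to\mathcal D_{2^\alpha-1}$ is equivalent to the \emph{little-oh} estimate $\mu_{2^\alpha-1,\varphi}\big(Q(\tau,\veps)\big)=o\big(\veps^{2^\alpha}\big)$ uniformly in $\tau$, whereas the main result of \cite{BB14} already guarantees the corresponding \emph{big-Oh} estimate. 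Thus everything reduces to deciding, at each boundary point $w\in\TT^d$ with $\mre\Phi(w)=1/2$, whether the local window measure $\widetilde{\nu}_{2^\alpha-1}\big(\{z\in\mathcal U_w\colon\Phi(z)\in Q(\tau,\veps)\}\big)$ decays strictly faster than $\veps^{2^\alpha}$ or only comparably to it.

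For (i) I would revisit the local estimates produced in the proof of Theorem~\ref{thm:dirichletpolynomial}(i). There each such window is bounded by a product of the one-variable volume estimates of Lemmas~\ref{lem:volume0}--\ref{lem:volume3}, and every variable that is pinned against the torus by the constraint $\mre\Phi(z)<1/2+\veps$ contributes a factor carrying exactly one power of $\beta$, while a variable ranging over a fixed interior region contributes only a $\veps$-independent constant. Consequently the resulting exponent is an affine function $\kappa_w(\beta)=m_w\beta+c_w$ of the weight, whose slope $m_w$ is the number of pinned variables and is therefore at least $1$; moreover that proof yields $\kappa_w\big((2^\alpha-1)/d\big)\geq 2^\alpha$. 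Since $d\geq2$ and $\alpha>0$ force $2^\alpha-1>(2^\alpha-1)/d$, we obtain
\[
\kappa_w(2^\alpha-1)\;\geq\;2^\alpha+m_w\Big(2^\alpha-1-\tfrac{2^\alpha-1}{d}\Big)\;\geq\;2^\alpha+(2^\alpha-1)\Big(1-\tfrac1d\Big)\;>\;2^\alpha
\]
at every critical $w$. The strict inequality $\kappa_w>2^\alpha$ is exactly the hypothesis of the compactness clause of Corollary~\ref{cor:carleson}, which delivers compactness of $\mathcal C_\varphi\colon\mathcal D_\alpha\to\mathcal D_{2^\alpha-1}$.

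For (ii) we have $d=1$, so $\Phi$ is a one-variable polynomial, and I would argue by producing a window that fails the little-oh condition. By unrestricted range the harmonic function $\mre\Phi-1/2$ is nonnegative on $\overline{\DD}$ and attains the value $0$ at some $w\in\TT$; Hopf's lemma then forces a strictly negative outward normal derivative, so $\Phi'(w)\neq0$ and $\Phi$ is conformal near $w$. Differentiating $\theta\mapsto\Phi(e^{i\theta})$ at $w$ and using that $\mre\Phi$ is minimized there shows that the tangential derivative of $\mre\Phi$ vanishes while that of $\mim\Phi$ equals $\pm|\Phi'(w)|\neq0$. Hence, with $s_0=\Phi(w)=1/2+i\tau_0$, near $w$ the set $\{z\colon\Phi(z)\in Q(\tau_0,\veps)\}$ is comparable to the homogeneous Carleson window $wS(c\veps,c\veps)$, whose $\widetilde{m}_{2^\alpha-1}$-measure is $\asymp\veps^{2^\alpha}$ by Lemma~\ref{lem:volume0} (the lower bound alternatively following from Lemma~\ref{lem:volume3}). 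Therefore $\mu_{2^\alpha-1,\varphi}\big(Q(\tau_0,\veps)\big)\gg\veps^{2^\alpha}$ as $\veps\to0^+$, the little-oh condition fails, and $\mathcal C_\varphi$ is not compact.

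For (iii) I would reuse the very symbols constructed in the proof of Theorem~\ref{thm:dirichletpolynomial}(iii): for each $d\geq1$ and each degree $m\geq3$ that construction exhibits a $\varphi\in\mathcal G$ of complex dimension $d$ and degree $m$ possessing a critical point $w$ at which one variable carries a genuine homogeneous Carleson window while the remaining variables range over a fixed interior region. As in (ii), the $\widetilde{\nu}_{2^\alpha-1}$-measure of the corresponding window is then $\asymp\veps^{2^\alpha}$ rather than $o\big(\veps^{2^\alpha}\big)$, so Lemma~\ref{lem:carlesoncond} shows $\mathcal C_\varphi\colon\mathcal D_\alpha\to\mathcal D_{2^\alpha-1}$ is not compact. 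The delicate step throughout is (i): one must verify that the local exponent extracted from the theorem is genuinely affine in $\beta$ with slope at least one at \emph{every} critical point, the degree-two case---where $(2^\alpha-1)/d$ is already optimal for boundedness---being where this margin is tightest.
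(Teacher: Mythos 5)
Your proposal is correct and follows essentially the same route as the paper, which simply deduces the corollary from the compactness clause of Corollary~\ref{cor:carleson} together with the local window estimates already established in the proof of Theorem~\ref{thm:dirichletpolynomial}: in case (i) the exponent $d\beta+1$ evaluated at $\beta=2^\alpha-1$ strictly exceeds $2^\alpha$ when $d\geq2$ and $\alpha>0$, while in cases (ii) and (iii) the homogeneous window $S(\veps,\veps)$ sitting inside the preimage of $Q(\tau,c\veps)$ forces $\mu_{2^\alpha-1,\varphi}(Q)\gg\veps^{2^\alpha}$, defeating the little-oh condition. Your appeal to Hopf's lemma in (ii) is just a repackaging of the paper's Lemma~\ref{lem:coefficients} (itself a consequence of Julia--Carath\'eodory), so this is not a genuinely different argument.
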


It is interesting to compare Corollary~\ref{cor:compact} to its version for $\alpha=0$ which is \cite[Thm.~3]{BB15}. Ignoring the technical part of \cite[Thm.~3]{BB15} regarding minimal Bohr lift and boundary index, we observe that the results match up. However, going into the details, we observe that this correspondance is not completely true. We shall give later (see Theorem~\ref{thm:linn}) simple examples of polynomial symbols $\varphi$ such that $\mathcal C_\varphi$ maps $\mathcal{D}_\alpha$ compactly into $\mathcal{D}_{2^\alpha-1}$ for $\alpha>0$, but does not map $\mathcal H^2$ compactly into $\mathcal H^2$. This phenomenon is due to the necessity to introduce the minimal Bohr lift in the context of $\mathcal H^2$.

Observe also that it is possible to deduce a version of Theorem~\ref{thm:dirichletpolynomial} for the case $\mathcal{C}_\varphi\colon\mathcal{D}_\alpha\to\mathcal{H}^2$ from \cite[Lem.~10]{BB15} using Lemma~\ref{lem:carlesoncond} and Corollary~\ref{cor:carleson}. However, the result would be cumbersome to state, due to the above mentioned technical parts, so we avoid it here.

We need one final lemma to prove Theorem \ref{thm:dirichletpolynomial}, which can easily be deduced from the Julia--Caratheodory theorem (or from elementary considerations as in the proof of \cite[Lem.~7]{BB15}).
\begin{lem}
	\label{lem:coefficients} Let $P(z) = \sum_{k=1}^K a_k(1-z)^k$ be a polynomial mapping $\DD$ into $\overline{\CC_0}$. Then $P\equiv0$ or $a_1>0$. 
\end{lem}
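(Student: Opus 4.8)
The plan is to pass to the variable $w = 1-z$, which carries $\DD$ onto the disc $D := \{w : |w-1|<1\}$, and to recast the hypothesis as a statement about $Q(w) := \sum_{k=1}^K a_k w^k = P(1-w)$: namely, $Q$ maps $D$ into $\overline{\CC_0}$ and satisfies $Q(0)=0$ (the latter because the sum starts at $k=1$). The geometric point driving the proof is that $D$ is tangent to the imaginary axis at the origin and otherwise lies in the open right half-plane. Concretely, for every fixed $\theta \in (-\pi/2,\pi/2)$ the ray $w = \veps e^{i\theta}$ lies in $D$ as soon as $\veps < 2\cos\theta$, since $|\veps e^{i\theta}-1|^2 = 1 - \veps(2\cos\theta - \veps)$. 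Consequently, for each such $\theta$ and all sufficiently small $\veps>0$ we have $\mre Q(\veps e^{i\theta}) \geq 0$.

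Next I would extract the leading-order behaviour of $Q$ at $0$. Assuming $P \not\equiv 0$, let $m \geq 1$ be the least index with $a_m \neq 0$, so that $Q(w) = a_m w^m + O(w^{m+1})$ near the origin. Dividing the inequality $\mre Q(\veps e^{i\theta}) \geq 0$ by $\veps^m > 0$ and letting $\veps \to 0^+$ kills the error term and yields $\mre\!\left(a_m e^{im\theta}\right) \geq 0$ for every $\theta \in (-\pi/2,\pi/2)$. Writing $a_m = |a_m| e^{i\psi}$, this reads $\cos(\psi + m\theta) \geq 0$ throughout the interval.

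The conclusion then follows from a short case analysis on $m$. If $m \geq 2$, then as $\theta$ runs over $(-\pi/2,\pi/2)$ the argument $\psi + m\theta$ sweeps an open interval of length $m\pi \geq 2\pi$; any open interval of length exceeding $\pi$ must contain a point where the cosine is negative, contradicting the inequality above. Hence $m=1$, and $\cos(\psi+\theta)\geq 0$ for all $\theta\in(-\pi/2,\pi/2)$; letting $\theta \to \pm\pi/2$ forces $\psi \equiv 0$, i.e. $a_1 > 0$. The same analysis rules out $a_1 = 0$ together with $P \not\equiv 0$, since that scenario would require $m\geq 2$, which we have excluded; thus $a_1 = 0$ can occur only when $P\equiv 0$.

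I expect the only delicate point to be the justification of this tangential approach—verifying that the rays $\veps e^{i\theta}$ genuinely enter $D$ and that the $O(\veps^{m+1})$ remainder is negligible after division by $\veps^m$—all of which is elementary once the geometry of $D$ near $0$ is made explicit. As an alternative I could invoke the Julia--Carath\'eodory theorem for $Q$ at the boundary fixed point $0$, reading off that the angular derivative $a_1$ is a nonnegative real number; I nonetheless prefer the self-contained computation outlined here.
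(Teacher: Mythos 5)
Your argument is correct and complete: the tangency of the disc $\{|w-1|<1\}$ to the imaginary axis at $0$ gives $\mre\bigl(a_m e^{im\theta}\bigr)\geq 0$ for all $\theta\in(-\pi/2,\pi/2)$, and the case analysis on $m$ cleanly forces $m=1$ and $a_1>0$. The paper itself gives no proof, merely citing the Julia--Carath\'eodory theorem or the ``elementary considerations'' of \cite[Lem.~7]{BB15}; your computation is precisely that elementary route (and you correctly note the Julia--Carath\'eodory alternative), so it matches the intended argument while supplying the details the paper omits.
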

We split the proof of Theorem~\ref{thm:dirichletpolynomial} into two parts, and begin with the easiest part.
\begin{proof}[Proof of Theorem~\ref{thm:dirichletpolynomial} --- (ii) and (iii)]
	We begin with (ii). Fix $\alpha>0$ and assume that $\varphi \in \mathcal{G}$ is a Dirichlet polynomial with $\dim(\varphi)=1$ and unrestricted range. By Corollary~\ref{cor:carleson} we investigate some $w \in \mathbb{T}$ such that $\Phi(w)=1/2+i\tau$, where $\Phi$ denotes the Bohr lift of $\varphi$. We may assume that $w = 1$ and $\tau=0$ after, if necessary, a (complex) rotation and a (vertical) translation. Hence, $\Phi$ is a polynomial of the form
	\[\Phi(z) = \frac{1}{2} + \sum_{k=1}^K a_k(1-z)^k.\]
	By Lemma~\ref{lem:coefficients} we know that $a_1>0$. In view of Corollary~\ref{cor:carleson}, it suffices to prove that for $\beta>0$ and every small enough $\varepsilon>0$, 
	\[\mu_{\beta,\varphi}\big(Q(0,\varepsilon)\big)\gg \varepsilon^{\beta+1}.\]
 Using Lemma~\ref{lem:volume0}, we see that it is sufficient to prove that the homogeneous Carleson window $S(\veps,\veps)$ is included in the pre-image of $Q(0,c\varepsilon)$ under $\Phi$ for some fixed $c\in(0,1)$ and 
	for every small enough $\varepsilon>0$. Now, note that if $z \in S(\varepsilon,\varepsilon)$, then
	\[\max\left\{\mre\big((1-z)^k\big),\,\mim\big((1-z)^k\big)\right\} \leq \varepsilon^k.\]
	In particular, since $\Phi$ is a polynomial and $a_1>0$, we find that if $z \in S(\varepsilon,\varepsilon)$, then
	\begin{align*}
		1/2 \leq \mre \Phi(z) &\leq 1/2 + a_1 \varepsilon + \mathcal{O}(\varepsilon^2), \\
		|\mim \Phi(z)| &\leq a_1 \varepsilon + \mathcal{O}(\varepsilon^2).
	\end{align*}
	Hence any $c>a_1/2$ will do. Part (iii) can be deduced from this argument in the following way. Let $\delta>0$ and let $\Psi(z)=\Psi(z_1,\,\ldots,\,z_d)$ be any polynomial in $d$ variables and define 
	\[\Phi(z)=\frac{1}{2}+(1-z_1)+\delta(1-z_1)^2\Psi(z).\] 
	Clearly $\Phi$ is the Bohr lift of 
	\[\varphi(s)=\frac 12+(1-p_1^{-s})+\delta(1-p_1^{-s})^2\Psi(p_1^{-s},\,\ldots,\,p_d^{-s}).\]
	It is proved in \cite[Lem.~9]{BB15} that by choosing $\delta>0$ sufficiently small, we can guarantee that $\varphi\in\mathcal{G}$, that $\varphi$ has unrestricted range and furthermore that if $\Phi$ touches the boundary of $\mathbb{C}_{1/2}$ at some point $z \in \overline{\mathbb{D}^d}$, then necessarily $z_1=1$. The argument given above works line for line with one minor modification. Suppose $z_1 \in S(\varepsilon,\varepsilon)$. Then for every choice of $z_2,\,\ldots,\,z_d$ in $\overline{\mathbb{D}}$ we have
	\[\max\left\{\mre\big(\delta(1-z_1)^2\Psi(z)\big),\,\mim\big(\delta(1-z_1)^2\Psi(z)\big)\right\}\leq\delta\|\Psi\|_\infty \varepsilon^2,\]
	so we conclude again by Corollary~\ref{cor:carleson} and Lemma~\ref{lem:volume0}.
\end{proof}

\begin{proof}
	[Proof of Theorem \ref{thm:dirichletpolynomial} --- (i)] Let $\varphi\in\mathcal G$ be a Dirichlet polynomial and assume that $\dim(\varphi)=d\geq 2$ and $\deg(\varphi)\in\{1,2\}$. Let $\Phi$ be the Bohr lift of $\varphi$. We will again apply Corollary \ref{cor:carleson}. Hence, let $w\in\TT^d$ be such that $\mre\Phi(w)=1/2$. Without loss of generality, we may assume that $w=\mathbf 1=(1,\dots,1)$ and that $\Phi(\mathbf 1)=1/2$. We may write $\Phi$ as 
	\[\Phi(z)=\frac12+\sum_{j=1}^d a_j (1-z_j)+\sum_{j=1}^d b_j (1-z_j)^2+\sum_{1\leq j<k\leq d}c_{j,k}(1-z_j)(1-z_k).\]
	We first claim that $a_j>0$ for any $j=1,\,\ldots,\,d$. Indeed, applying Lemma \ref{lem:coefficients} to $\Phi(\mathbf 1,z_j,\mathbf 1)-1/2$, we know that either $a_j>0$ or $a_j=b_j=0$. Assume that the latter case holds. Since $\varphi$ has complex dimension $d$, there exists $k\neq j$ so that $c_{j,k}\neq 0$. Let us consider $\Psi(z_j,z_k)=\Phi(\mathbf 1,z_j,\mathbf 1,z_k,\mathbf 1)$. Then a Taylor expansion of $\Psi(e^{i\theta_j},e^{i\theta_k})$ shows that 
	\[\mre \Psi\big(e^{i\theta_j},e^{i\theta_k}\big)=\frac 12+\left(\frac{a_k}2-\mre(b_k)\right)\theta_k^2-\mre(c_{j,k})\theta_j\theta_k+o(\theta_j^2)+o(\theta_k^2).\] 
	Choosing $\theta_j=\delta$ and $\theta_k=\delta^2$ and letting $\delta$ to $0$, this implies that $\mre(c_{j,k})=0$ since by assumption $\mre \Psi\geq 1/2$. On the other hand, for $\rho_j\in(0,1)$,
\[\mre \Psi(1-\rho_j,e^{i\theta_k})=\frac 12+ \left(\frac{a_k}2-\mre (b_k)\right)\theta_k^2+\mim(c_{j,k})\rho_j\theta_k+o(\rho_j^2)+o(\theta_k^2).\]
This in turn yields that $\mim (c_{j,k})=0$, a contradiction.
	
	We come back to $\Phi$ and, for $j=1,\,\ldots,\,d$, we write $z_j=(1-\rho_j)e^{i\theta_j}$ where $\rho_j\in(0,1)$ and $\theta_j\in [-\pi,\pi)$. We shall use the local diffeomorphism between a neighborhood of $\bf 1$ in $\CC^d$ and a neighborhood of $\bf 0$ in $\RR^{2d}$ given by
	\[(\rho,\theta)\mapsto\big((1-\rho_1)e^{i\theta_1},\,\ldots,\,(1-\rho_d)e^{i\theta_d}\big).\] 
	A Taylor expansion of $\mre\Phi$ near $\mathbf 1$ shows that 
	\[\mre\Phi(z)=\frac12+\sum_{j=1}^d \rho_j F_j(\rho,\theta)+G(\theta)\]
	where $F_j(\mathbf 0)=a_j$. Taking all $\rho_j$ equal to zero, we get that $G(\theta)\geq 0$. Hence, there exists a (fixed) neighborhood $\mathcal{U}\ni\mathbf 1$ in $\overline{\DD^d}$ such that for all $\varepsilon>0$ and all $\tau\in\mathbb{R}$, 
	\[0\leq \sum_{j=1}^d \rho_j F_j(\rho,\theta)\leq \veps\qquad \text{and}\qquad F_j(\rho,\theta)\geq \frac{a_j}2\]
	provided $z\in\mathcal U$ and $\Phi(z)\in Q(\tau,\veps)$. This implies that $|\rho_j|\leq 2\veps/a_j$ for any $j=1,\ldots,d$. We now look at $\mim\Phi$ and let us write it under the following form:
	\[\mim \Phi(z)=\gamma_{(\rho,\theta_2,\ldots,\theta_d)}(\theta_1)=a_1\theta_1+o(\theta_1).\]
	The map $(\rho,\theta)\mapsto \gamma_{(\rho,\theta_2,\ldots,\theta_d)}(\theta_1)$ is smooth and satisfies $\gamma^\prime_{(\rho,\theta_2,\ldots,\theta_d)}(0)=a_1$. Then there exists $\mathcal V$ a neighborhood of $\mathbf 1$ in $\CC^d=\RR^{2d}$ such that, for any $z\in\mathcal V$, 
	\begin{eqnarray}
		\label{eq:derivative} \gamma^\prime_{(\rho,\theta_2,\ldots,\theta_d)}(\theta_1)\geq \frac{a_1}{2}. 
	\end{eqnarray}
	Now, if $(\rho,\theta_2,\ldots,\theta_d)$ are fixed and $\theta_1$ is such that $z$ belongs to $\mathcal V$, the condition $\Phi(z)\in Q(\tau,\veps)$ implies that $\gamma_{(\rho,\theta_2,\ldots,\theta_d)}(\theta_1)$ belongs to some interval of length $\veps$. By (\ref{eq:derivative}), this implies that $\theta_1$ belongs to some interval of length $C\veps$, where $C$ does not depend on $(\rho,\theta_2,\dots,\theta_d)$ provided $z\in \mathcal V$.
	
	Let us summarize the previous computations. We have shown that there exist a (fixed) neighborhood $\mathcal W=\mathcal U\cap\mathcal V$ of $\mathbf 1$ in $\CC^d$ and a constant $D>0$ such that, for any $z\in\mathcal W\cap\DD^d$ and any $\veps>0$ satisfying $\Phi(z)\in Q(\tau,\veps)$, then $\rho_j\leq D\veps$ and $\rho,\,\theta_2,\,\ldots,\,\theta_d$ being fixed, $\theta_1$ belongs to some fixed interval of length $D\veps$. By Fubini's theorem and polar integration as in Lemma \ref{lem:volume0}, we get that 
	\[\widetilde{\nu}_\beta\left(\big\{z\in \mathcal W\cap \DD^d\,:\, \Phi(z)\in Q(\tau,\veps)\big\}\right)\ll \veps^{d\beta+1}.\]
	We conclude by Corollary \ref{cor:carleson}.
\end{proof}

Let us focus our attention on part (ii) of Theorem~\ref{thm:dirichletpolynomial}, which implies that it is sufficient to consider the most simple non-trivial symbol,
\begin{equation} \label{eq:simplesymbol}
	\varphi(s) = 3/2 - 2^{-s} = 1/2 + (1-2^{-s}),
\end{equation}
to conclude that the sharp $\beta$ for $\mathcal{C}_\varphi\colon\mathcal{D}_\alpha\to\mathcal{D}_\beta$ is $\beta=2^\alpha-1$. This is perhaps not so surprising, since we can consider \eqref{eq:simplesymbol} a local version of the symbol associated to the transference map,
\[\mathcal{T}(2^{-s}) = \frac{1}{2} + \frac{1-2^{-s}}{1+2^{-s}},\]
as considered in Section~\ref{sec:bohr}. We will devote the remainder of this section to investigating two classes of examples that generalize \eqref{eq:simplesymbol}.

The first extension of \eqref{eq:simplesymbol} are the \emph{linear symbols}, namely symbols which are of the form 
\begin{equation}
	\label{eq:linear} \varphi(s) = c_1 + \sum_{j=1}^d c_{p_j}p_j^{-s}. 
\end{equation}
Observe in particular that \eqref{eq:simplesymbol} is just the case $d=1$. We have the following result. 
\begin{thm}
	\label{thm:linear} Let $\alpha,\beta\geq 0$. Let $\varphi$ of the form \eqref{eq:linear} with unrestricted range and $c_{p_j}\neq 0$ for every $j$. Then $\mathcal C_{\varphi}\colon\mathcal D_\alpha\to\mathcal D_\beta$ is bounded if and only if 
	\begin{equation}
		\label{eq:linab} \frac 12+d\left(\frac 12+\beta\right)\geq 2^\alpha. 
	\end{equation}
	Moreover, $\mathcal C_{\varphi}\colon\mathcal D_\alpha\to\mathcal D_\beta$ is compact if and only if the inequality in \eqref{eq:linab} is strict. 
\end{thm}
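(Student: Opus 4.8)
The plan is to apply the Carleson-measure machinery of Corollary~\ref{cor:carleson} together with the volume estimates of Lemmas~\ref{lem:volume0}--\ref{lem:volume3}. Writing $\Phi$ for the Bohr lift of $\varphi$, the only boundary points $w\in\TT^d$ with $\mre\Phi(w)=1/2$ are, after a rotation in each variable, of the form $w=\mathbf 1=(1,\dots,1)$ (using that each $c_{p_j}\neq0$, so $\mre\Phi$ attains its minimum $1/2$ only when every $\mre(1-z_j)$ is minimized simultaneously). Near such a $w$ we have the expansion $\Phi(z)=\tfrac12+\sum_{j=1}^d c_{p_j}(1-z_j)$, and by Lemma~\ref{lem:coefficients} applied in each variable we may take each $c_{p_j}>0$. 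The task is then to estimate $\widetilde\nu_\beta(\{z\in\DD^d : \Phi(z)\in Q(\tau,\veps)\})$ and compare the resulting exponent to $2^\alpha$.

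For the sufficiency direction, first I would analyze the constraint $\Phi(z)\in Q(\tau,\veps)$ componentwise. The real-part condition $0\le\sum_j c_{p_j}\,\mre(1-z_j)\le\veps$ forces $\mre(1-z_j)\ll\veps$ for each $j$ (since all $c_{p_j}>0$ and each summand is nonnegative), which by Lemma~\ref{lem:volume1} constrains each $z_j$ to a set of $\widetilde m_\beta$-measure $\asymp\veps^{\beta+1/2}$. The imaginary-part condition $|\sum_j c_{p_j}\,\mim(1-z_j)-(\tau-1/2)|\le\veps/2$ then pins down \emph{one} additional coordinate up to an interval of length $\ll\veps$, improving its factor from $\veps^{\beta+1/2}$ to $\veps^{\beta+1}$ via the inhomogeneous window estimate of Lemma~\ref{lem:volume2}. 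By Fubini, multiplying one factor of $\veps^{\beta+1}$ with $d-1$ factors of $\veps^{\beta+1/2}$ gives
\[
\widetilde\nu_\beta\big(\{z\in\DD^d:\Phi(z)\in Q(\tau,\veps)\}\big)\ll\veps^{(\beta+1)+(d-1)(\beta+1/2)}=\veps^{\,1/2+d(1/2+\beta)}.
\]
Corollary~\ref{cor:carleson} then yields boundedness provided $1/2+d(1/2+\beta)\ge2^\alpha$, which is exactly \eqref{eq:linab}, and compactness when the exponent strictly exceeds $2^\alpha$.

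For necessity (and the sharpness of the exponent above), I would exhibit a reverse estimate showing the measure is also $\gg\veps^{1/2+d(1/2+\beta)}$. Choosing $\tau$ appropriately near $1/2$, one shows that a product of windows---$S(c\veps,c\veps^{1/2})$ in $d-1$ of the variables (capturing the full real-part freedom) and a genuine inhomogeneous ball-type window in the remaining variable---lies inside $\Phi^{-1}(Q(\tau,c\veps))$. The lower bounds of Lemmas~\ref{lem:volume0} and \ref{lem:volume3} then give the matching lower estimate, so if $\mathcal C_\varphi\colon\mathcal D_\alpha\to\mathcal D_\beta$ were bounded with $1/2+d(1/2+\beta)<2^\alpha$, the Carleson condition \eqref{eq:Carlesonest} would fail; strictness in \eqref{eq:linab} is likewise necessary for compactness via the $o(\veps^{2^\alpha})$ criterion. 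I expect the main obstacle to be the bookkeeping in the reverse inclusion: one must verify that the chosen product window in the $\rho$- and $\theta$-variables maps genuinely into a single Carleson square $Q(\tau,c\veps)$, which requires carefully aligning the imaginary-part condition (a linear combination of the $\mim(1-z_j)$) so that the constrained coordinate is free to sweep an interval of the correct length while the remaining coordinates only contribute controlled $O(\veps)$ perturbations.
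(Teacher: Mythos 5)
Your proposal follows essentially the same route as the paper: reduce to the single boundary point $\mathbf 1$ of the (normalized) linear Bohr lift, get the upper bound $\widetilde\nu_\beta(\Phi^{-1}(Q(\tau,\veps)))\ll\veps^{1/2+d(1/2+\beta)}$ by combining $d-1$ applications of Lemma~\ref{lem:volume1} with one application of Lemma~\ref{lem:volume2} via Fubini, and get the matching lower bound from Lemmas~\ref{lem:volume0} and~\ref{lem:volume3} to settle necessity through the Carleson characterization. The only point you omit is the case $\beta=0$, where the volume lemmas (stated for $\beta>0$) do not apply and the relevant measure lives on $\TT^d$ rather than $\DD^d$; the paper treats that case separately by invoking a lemma from the earlier work of Queff\'elec and Seip together with Corollary~\ref{cor:carleson}.
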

\begin{proof}
	If $\beta=0$, this can be extracted from \cite[Lem.~8.2]{QS15} in combination with Corollary~\ref{cor:carleson}. 
	
	Assume therefore that $\beta>0$. Arguing as in \cite{QS15}, we may assume that $c_1>0$ and that $c_{p_j}<0$ for every $j$. Since $\varphi$ has unrestricted range, we know that
	\[c_1=\frac{1}{2} + \sum_{j=1}^d |c_{p_j}|.\]
	We will represent the Bohr lift of $\varphi$ in the following way. 
	\begin{equation}
		\label{eq:linearbohr} \Phi(z)=c_1+\sum_{j=1}^d c_{p_j}-\sum_{j=1}^d c_{p_j}(1-z_j)=\frac 12+\sum_{j=1}^d |c_{p_j}| (1-z_j). 
	\end{equation}
	Let $\tau\in\RR$ and $\veps>0$. If $\Phi(z)\in Q(\tau,\veps)$, we inspect \eqref{eq:linearbohr} to conclude, for any $j=1,\ldots,d-1$, that
	\[\mre(1-z_j)\leq \frac{\veps}{|c_{p_j}|}.\]
	Hence, for any $j=1,\ldots,d-1$, by Lemma~\ref{lem:volume1} we know that $z_j$ belongs to some set $R_j(\veps)$ satisfying $\widetilde{m}_\beta(R_j(\veps))\ll \veps^{\frac12+\beta}$. Moreover, for a fixed value of $z_1,\ldots,z_{d-1}$, we also have
	\[\mre(1-z_d)\leq \frac{\veps}{|c_{p_d}|}\qquad\text{and}\qquad|\mim(v-z_d)|\leq\frac{\veps}{2|c_{p_d}|}\]
	for $v\in\CC$ depending on $\tau,z_1,\ldots,z_{d-1}$. By Lemma~\ref{lem:volume2}, $z_d$ belongs to some set $R_d(z_1,\ldots,z_{d-1})$ satisfying $\widetilde{m}_\beta(R_d(z_1,\ldots,z_{d-1}))\ll \veps^{1+\beta}$. Using Fubini's theorem, we get $$\mu_{\beta,\varphi}\big(Q(\tau,\veps)\big)\ll \veps^{\frac 12+d\left(\frac12+\beta\right)}$$ and we compare this with the sufficient condition for continuity.
	
	Conversely, assume that $z_j$ belongs to $D(\eta)=\{z\in\DD\,:\, \mre(1-z)\geq \eta \veps\}$ for some small $\eta>0$ and for any $j=1,\ldots,d-1$. Observe that $\widetilde{m}_\beta\big(D(\eta)\big)\gg \veps^{\frac12+\beta}$. Then, setting
	\[v=\big(|c_{p_1}|(1-z_1)+\cdots+|c_{p_{d-1}}|(1-z_{d-1})\big)/|c_{p_d}|\]
	we get 
	\begin{eqnarray*}
		\Phi(z_1,\ldots,z_d)\in Q(0,\veps)&\iff& \left\{ 
		\begin{array}{l}
			0\leq \mre\big(c_1-|c_{p_1}|z_1-\cdots-|c_{p_d}|z_d\big)\leq\veps\\
			\left|\mim\big(|c_{p_1}|z_1+\cdots+|c_{p_d}|z_d\big)\right|\leq\veps/2
		\end{array}
		\right.\\
		&\iff&\left\{ 
		\begin{array}{l}
			0\leq |c_{p_1}|\mre(1-z_1)+\cdots+|c_{p_d}|\mre(1-z_d)\leq\veps\\
			\left|\mim\big(|c_{p_1}(1-z_1)+\cdots+|c_{p_{d-1}}|(1-z_{d-1})-|c_{p_d}|z_d\big)\right|\leq\veps /2
		\end{array}
		\right.\\
		&\iff&\left\{ 
		\begin{array}{l}
			0\leq \mre\big(v+(1-z_d)\big)\leq \veps/{|c_{p_d}|}\\
			\left|\mim\big(v-z_d\big)\right|\leq\veps/{2|c_{p_d}|}. 
		\end{array}
		\right. 
	\end{eqnarray*}
	Now, $\mre(v)\leq C\eta\veps$ and $|\mim(v)|\leq (2C\eta\veps)^{1/2}$ for
	\[C=\frac{|c_{p_1}|+\cdots+|c_{p_{d-1}}|}{|c_{p_d}|}.\]
	Hence, provided $\eta$ is small enough, then $\Phi(z_1,\ldots,z_d)\in Q(0,\veps)$ as soon as $\mre(1-z_d)<\eta\veps$ and $|v-z_d|<\eta\veps$. By Fubini's theorem and Lemma \ref{lem:volume3},
	\[\mu_{\beta,\varphi}\big(Q(0,\veps)\big)\gg \veps^{\frac12+d\left(\frac 12+\beta\right)}.\]
	We conclude by Corollary~\ref{cor:carleson}. The same proof shows that $\mathcal{C}_\varphi$ maps $\mathcal{D}_\alpha$ compactly into $\mathcal{D}_\beta$ if and only if $1/2 + d(1/2+\beta)>2^\alpha$. 
\end{proof}

It is clear that in $\mathcal H^2$, the monomials $n^{-s}$ all have norm $1$. This is of course no longer the case in $\mathcal{D}_\alpha$ when $\alpha>0$. Thus we have more flexibility in choosing the Bohr lift for $\mathcal{H}^2$, since we may use any sequence of independent integers $(q_1,\ldots,q_d)$ instead of $(p_1,\ldots,p_d)$. This lead us to introduce the notion of \emph{minimal} Bohr lift in \cite{BB15}. For the Bergman spaces, we are by definition required to consider the \emph{canonical} Bohr lift, since it is used to compute the norm. In this sense the situation is less subtle. To further emphasize the difference between $\alpha=0$ and $\alpha>0$, we have the following result.
\begin{thm}
	\label{thm:linn} Let $\alpha\geq0$ and consider $\varphi(s) = 3/2 - n^{-s}$ for some fixed integer $n\geq2$. Set $d=\dim(\varphi)$, which in this case is equal to the number of distinct prime factors of $n$. Then $\mathcal{C}_\varphi\colon\mathcal{D}_\alpha\to\mathcal{D}_\beta$ is bounded if and only if $\beta\geq(2^\alpha-1)/d$. If $\alpha=0$, then $\mathcal{C}_\varphi$ is not compact on $\mathcal{H}^2$. If $\alpha>0$, then $\mathcal{C}_\varphi\colon\mathcal{D}_\alpha \to \mathcal{D}_\beta$ is compact if and only if $\beta>(2^\alpha-1)/d$. 
\end{thm}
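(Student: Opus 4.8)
The plan is to reduce the whole theorem to a single two-sided estimate for the pullback measure $\mu_{\beta,\varphi}$ and then invoke Lemma~\ref{lem:carlesoncond}. Writing $n = p_1^{a_1}\cdots p_d^{a_d}$ with each $a_j\geq 1$, the Bohr lift of $\varphi$ is the single-monomial polynomial $\Phi(z)=3/2-z_1^{a_1}\cdots z_d^{a_d}$, so that $\charac(\varphi)=0$, the symbol $\varphi$ has unrestricted range, and $\varphi(\CC_0)\subset B(3/2,1)$ is bounded. Hence Lemma~\ref{lem:carlesoncond} applies (including the case $\beta=0$), and it suffices to find the threshold at which $\mu_{\beta,\varphi}(Q(\tau,\veps))=O(\veps^{2^\alpha})$, respectively $o(\veps^{2^\alpha})$. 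Setting $W=z_1^{a_1}\cdots z_d^{a_d}$, the condition $\Phi(z)\in Q(\tau,\veps)$ is equivalent to $W\in 3/2-Q(\tau,\veps)$, a square of side $\asymp\veps$ centred at $1-i\tau$; since $\mre W\geq 1-\veps$ forces $|W|\geq 1-\veps$, the mass concentrates near the boundary set $\{z:W=1\}$, and the squares are empty unless $|\tau|\ll\veps^{1/2}$.

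The heart of the matter is the estimate
\[\mu_{\beta,\varphi}\big(Q(\tau,\veps)\big)\asymp\veps^{d\beta+1},\]
with the upper bound uniform in $\tau$ and the lower bound taken at $\tau=0$. I would obtain it by passing to polar coordinates $z_j=(1-\rho_j)e^{i\theta_j}$ and exploiting the exact factorization $|W|=\prod_j(1-\rho_j)^{a_j}=:R$, a function of $\rho$ alone, and $\arg W=\sum_j a_j\theta_j=:\Theta$, a function of $\theta$ alone. On the relevant region $\Theta$ is forced to be $O(\veps^{1/2})$, so $\cos\Theta=1+O(\veps)$ and the two constraints decouple: the radial condition reads $\sum_j a_j\rho_j\ll\veps$, and integrating the weight $\prod_j\rho_j^{\beta-1}$ arising from $\widetilde m_\beta$ over this simplex contributes $\asymp\veps^{d\beta}$, while the angular condition confines $\Theta$ to an arc of length $\asymp\veps$. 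For the latter I would use that $\theta\mapsto\sum_j a_j\theta_j$ is a surjective continuous homomorphism $\TT^d\to\TT$ and therefore pushes Haar measure forward to Haar measure, whence $\{\theta:\Theta\in\text{arc of length }\veps\}$ has $\theta$-measure $\asymp\veps$ no matter where the arc sits. Fubini then yields the product $\veps^{d\beta}\cdot\veps$; the matching lower bound follows by exhibiting an explicit sub-box (say $\sum_j a_j\rho_j\leq\veps/2$ and $\Theta\in[-\veps/4,\veps/4]$) contained in $\Phi^{-1}(Q(0,\veps))$. When $\beta=0$ there is no radial integration and $\mu_{0,\varphi}$ lives on $\TT^d$, but the angular computation is unchanged and gives the same formula with $d\beta=0$, namely $\mu_{0,\varphi}(Q(\tau,\veps))\asymp\veps$.

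Granting this estimate, the conclusions are immediate from Lemma~\ref{lem:carlesoncond}. Boundedness of $\mathcal C_\varphi\colon\mathcal D_\alpha\to\mathcal D_\beta$ is equivalent to $\veps^{d\beta+1}=O(\veps^{2^\alpha})$, that is $d\beta+1\geq 2^\alpha$, that is $\beta\geq(2^\alpha-1)/d$; for $\alpha>0$, compactness is equivalent to $\veps^{d\beta+1}=o(\veps^{2^\alpha})$ uniformly, that is $d\beta+1>2^\alpha$, that is $\beta>(2^\alpha-1)/d$, the lower bound at $\tau=0$ ruling out the borderline case. For $\alpha=0$ we have $2^\alpha=1$ and $\mu_{0,\varphi}(Q(0,\veps))\asymp\veps$, which is $O(\veps)$ but not $o(\veps)$, so $\mathcal C_\varphi$ is bounded but not compact on $\mathcal H^2$.

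The main obstacle is the angular estimate. Unlike the linear symbols of Theorem~\ref{thm:linear}, whose Bohr lift meets the boundary of $\CC_{1/2}$ only at the single point $\mathbf 1$, here the monomial $z_1^{a_1}\cdots z_d^{a_d}$ meets it along the entire codimension-one subtorus $\{z\in\TT^d:z_1^{a_1}\cdots z_d^{a_d}=1\}$. One must see that this whole manifold contributes collectively only a factor $\veps$ (and not, say, $\veps^{1/2}$ as a naive one-variable Carleson window might suggest), and that the modulus and argument of $W$ separate cleanly; the homomorphism–pushforward viewpoint is precisely what makes both points transparent and uniform in $\tau$.
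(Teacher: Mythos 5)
Your argument is correct, but it is not the route the paper takes. The paper's proof is an algebraic reduction: it introduces an auxiliary prime $p\nmid n$ and the symbol $\psi(s)=3/2-p^{-s}$, observes that $\mathcal C_\varphi=T\circ\mathcal C_\psi$ where $T$ sends $p^{-ks}\mapsto n^{-ks}$, and uses the divisor-function identity $d(n^k)\asymp(k+1)^d$ to show $\|T(g)\|_{\mathcal D_{\beta/d}}\asymp\|g\|_{\mathcal D_\beta}$ on the range of $\mathcal C_\psi$ (Dirichlet series supported on powers of $p$); the one-dimensional facts for $\psi$ are then quoted from Theorem~\ref{thm:dirichletpolynomial}~(ii) and Corollary~\ref{cor:compact}~(ii), and the case $\alpha=0$ is handled separately by citing the minimal-Bohr-lift results of \cite{BB15}. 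You instead compute the Carleson measure of the canonical lift $\Phi(z)=3/2-z_1^{a_1}\cdots z_d^{a_d}$ directly, and the two ingredients that make this work --- the exact decoupling of $|W|$ (a function of the moduli) from $\arg W$ (a function of the arguments), and the fact that the character $\theta\mapsto\sum_j a_j\theta_j$ pushes Haar measure on $\TT^d$ forward to Haar measure on $\TT$, so the whole contact subtorus $\{W=1\}$ contributes only a factor $\veps$ --- are sound; the sandwiching of $\{\mre(W)\geq 1-\veps\}$ between product sets and the simplex integral $\int_{\sum a_j\rho_j\leq\veps}\prod_j\rho_j^{\beta-1}\,d\rho\asymp\veps^{d\beta}$ then give the two-sided bound $\mu_{\beta,\varphi}(Q(0,\veps))\asymp\veps^{d\beta+1}$ with the uniform upper bound in $\tau$, from which all three assertions follow via Lemma~\ref{lem:carlesoncond} exactly as you say. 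What the paper's route buys is brevity and reuse of results already established; what yours buys is a self-contained, unified treatment (the $\alpha=0$, $\beta=0$ case drops out of the same computation with no appeal to the minimal Bohr lift), an explicit asymptotic for the pullback measure rather than just the boundedness threshold, and a transparent explanation of why the answer depends only on $d$ and not on the exponents $a_j$. The one cosmetic caveat is that your displayed $\asymp$ should be read as you immediately qualify it: the lower bound holds at $\tau=0$ (the measure vanishes for $|\tau|\gg\veps^{1/2}$), which is all that Lemma~\ref{lem:carlesoncond} requires to rule out boundedness below the threshold and compactness at it.
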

Observe that for every $\alpha>0$, we can make $\mathcal{C}_\varphi$ map $\mathcal{D}_\alpha$ into $\mathcal{D}_\beta$ for any $\beta>0$, by increasing the number of prime factors in $n$. However, we can never obtain $\beta=0$ in this case.
\begin{proof}
	Assume first that $\alpha=0$. As explained in \cite{BB15}, the minimal Bohr lift is simply $\Phi(z)=3/2-z$ for every integer $n\geq2$, and by the results in \cite{BB15}, this means that $\mathcal{C}_\varphi\colon\mathcal{H}^2\to\mathcal{H}^2$ is bounded, but not compact.
	
	Assume now that $\alpha>0$. Let $p$ be any prime number that does not divide $n$ and consider $\psi(s) = 3/2-p^{-s}$. By Theorem~\ref{thm:dirichletpolynomial}~(ii) and Corollary~\ref{cor:compact}~(ii) we know that $\mathcal{C}_\psi\colon\mathcal{D}_\alpha \to \mathcal{D}_\beta$ is bounded if and only if $\beta\geq2^\alpha-1$ and compact if and only if $\beta>2^\alpha-1$. Now, define the operator $T$ on $\mathcal{C}_\psi(\mathcal{D}_{\alpha})$ by $T(p^{-s})=n^{-s}$ so that $\mathcal{C}_\varphi = T\circ\mathcal{C}_\psi$. A trivial estimate with the divisor function shows that if $g \in \mathcal{C}_\psi(\mathcal{D}_{\alpha})$, then
	\[\|g\|_{\mathcal{D}_\beta} \leq \|T(g)\|_{\mathcal{D}_{\beta/d}} \ll_{n,\beta} \|g\|_{\mathcal{D}_\beta},\]
	so we are done. 
\end{proof}

\begin{rem}
It is natural to ask whether the space $\mathcal D_{(2^\alpha-1)/d}$ in Theorem~\ref{thm:dirichletpolynomial}~(i) is optimal. We found that this is not the case for linear symbols in Theorem~\ref{thm:linear}. By Theorem~\ref{thm:linn}, it is optimal if $\dim(\varphi)=2$. For $\dim(\varphi)\geq 3$, we conjecture that $(2^\alpha-1)/d$ is not optimal, but our results do not further substantiate this claim.
\end{rem}

\section{Composition operators with linear symbols on $\mathcal{H}^p$} \label{sec:hp}
Let us reiterate that the results of the previous section show that the optimal $\beta$ for the local embedding of $\mathcal{D}_\alpha$ can, through the results of Section~\ref{sec:bohr}, be decided simply by considering the symbol $\varphi(s)=3/2-2^{-s}$. The embedding problem is in general open for $\mathcal{H}^p$, so it is therefore interesting to investigate how the composition operator generated by this symbol acts on $\mathcal{H}^p$.

As previously mentioned, composition operators with characteristic $0$ acting on $\mathcal{H}^p$ are not well understood when $p$ is not an even integer. In particular, very few examples are known. To our knowledge, the only known non-trivial examples appear in \cite{BQS16}. The symbols of these operators are given by
\begin{equation} \label{eq:lensmaps}
	\varphi(s)=\frac{1}{2}+\left(\frac{1-\omega(2^{-s})}{1+\omega(2^{-s})}\right)^{1-\veps}
\end{equation}
where $\omega$ is an analytic self-map of $\mathbb{D}$ and $\veps\in(0,1)$. Observe that the fact that we are not allowed to set $\varepsilon=0$ restricts the range of $\varphi$ in $\mathbb{C}_{1/2}$. Symbols of this type are a type of lens maps from $\mathbb{C}_0$ to $\mathbb{C}_{1/2}$. Observe also that the most simple case $\omega(z)=z$ yields a restricted version of the ``transference map'' from Theorem~\ref{thm:Hp} (iii).

Now, it is clear that $\varphi(s) = 3/2 -2^{-s}$, or indeed any Dirichlet polynomial, is not of the form \eqref{eq:lensmaps}. We are not able to settle the boundedness of the composition operator induced by this symbol on $\mathcal{H}^p$, but we will again consider symbols of linear type. Using Theorem \ref{thm:bergmanemb}, we will be able to prove boundedness when the complex dimension is bigger than or equal to 2.

Our last main tool for this will be the so-called $p/q$\emph{--Carleson measures}. Let $1\leq p,q<\infty$ and let $X$ be one of the spaces considered in this paper, for instance $X=\mathcal{H}^q$ or $X=H^q(\mathbb{C}_{1/2})$. If $X = \mathcal{D}_\alpha$ or $X = D_\beta(\CC_{1/2})$ then $q=2$. We require that a measure $\mu$ satisfies
\begin{equation} \label{eq:pqC}
	\left(\int_{\mathbb{C}_{1/2}}|f(s)|^p\,d\mu(s)\right)^\frac{1}{p} \leq C \|f\|_X,
\end{equation}
for some constant $C=C(p,q,X)$ to be $p/q$--Carleson for $X$. For $X=H^q(\mathbb{C}_{1/2})$ and $q\leq p$, the following description can be found in \cite[Thm.~9.4]{Duren}.
\begin{lem} \label{lem:pq1}
	Let $1\leq q \leq p <\infty$. A positive Borel measure $\mu$ on $\mathbb{C}_{1/2}$ is $p/q$--Carleson for $H^q(\mathbb{C}_{1/2})$ if and only if
	\[\mu\big(Q(\tau,\varepsilon)\big) = \mathcal{O}\big(\varepsilon^{p/q}\big)\]
	for every Carleson square $Q(\tau,\varepsilon) = [1/2,1/2+\epsilon]\times[\tau-\varepsilon/2,\tau+\varepsilon/2]$.
\end{lem}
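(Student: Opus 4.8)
The plan is to prove both implications by working directly in the half-plane $\mathbb{C}_{1/2}$, identifying the boundary line $\mathrm{Re}(s)=1/2$ with $\mathbb{R}$ via $s=1/2+it$. The necessity is obtained by inserting a family of explicit test functions adapted to each Carleson square, while the sufficiency rests on the non-tangential maximal function together with a covering of the super-level sets of $|f|$ by Carleson boxes. (Alternatively, one could transfer everything to the disc through the conformal map $\mathcal{T}$ and invoke the classical disc version, but the half-plane computation is self-contained.)

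For necessity, suppose $\mu$ is $p/q$--Carleson and fix $Q(\tau,\varepsilon)$. I would test \eqref{eq:pqC} against
\[f_{\tau,\varepsilon}(s) = \left(\frac{1}{s - 1/2 + \varepsilon - i\tau}\right)^{2/q},\]
which is holomorphic in $\mathbb{C}_{1/2}$ since $\mathrm{Re}(s - 1/2 + \varepsilon - i\tau)\geq\varepsilon>0$ there. A direct computation gives $\|f_{\tau,\varepsilon}\|_{H^q(\mathbb{C}_{1/2})}^q = \sup_{\sigma>1/2}\frac{\pi}{\sigma - 1/2 + \varepsilon}=\frac{\pi}{\varepsilon}$, so $\|f_{\tau,\varepsilon}\|_{H^q}\asymp\varepsilon^{-1/q}$, while $|s - 1/2 + \varepsilon - i\tau|\asymp\varepsilon$ for $s\in Q(\tau,\varepsilon)$ forces $|f_{\tau,\varepsilon}(s)|\asymp\varepsilon^{-2/q}$ there. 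Consequently
\[\varepsilon^{-2p/q}\,\mu\big(Q(\tau,\varepsilon)\big) \ll \int_{\mathbb{C}_{1/2}}|f_{\tau,\varepsilon}|^p\,d\mu \leq C^p\|f_{\tau,\varepsilon}\|_{H^q}^p \asymp \varepsilon^{-p/q},\]
and rearranging yields $\mu(Q(\tau,\varepsilon))=\mathcal{O}(\varepsilon^{p/q})$.

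For sufficiency, assume $\mu(Q(\tau,\varepsilon))\leq A\varepsilon^{p/q}$ for all squares and let $f\in H^q(\mathbb{C}_{1/2})$. I would introduce the non-tangential maximal function $f^*(x)=\sup_{s\in\Gamma(x)}|f(s)|$, where $\Gamma(x)=\{\sigma+it\colon |t-x|<\sigma-1/2\}$ is the Stolz cone at $1/2+ix$, and recall the classical maximal estimate $\|f^*\|_{L^q(\mathbb{R})}\ll\|f\|_{H^q(\mathbb{C}_{1/2})}$. The geometric key is that if $s_0=\sigma_0+it_0$ satisfies $|f(s_0)|>\lambda$, then $s_0\in\Gamma(x)$ for every $x$ in the interval $I_{s_0}$ of length $2(\sigma_0-1/2)$ centered at $t_0$, whence $I_{s_0}\subseteq O_\lambda:=\{x\colon f^*(x)>\lambda\}$. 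Thus every point of $\{|f|>\lambda\}$ sits in a Carleson box based on a connected component of the open set $O_\lambda$. Writing $O_\lambda=\bigcup_j I_j$ (a disjoint union of its components, all bounded since $|O_\lambda|\leq\lambda^{-q}\|f^*\|_q^q<\infty$) and covering $\{|f|>\lambda\}$ by the boxes $Q_j=Q(c_j,|I_j|)$, the hypothesis and the elementary inequality $\sum_j a_j^{p/q}\leq(\sum_j a_j)^{p/q}$ (valid because $p/q\geq1$) give
\[\mu\big(\{|f| > \lambda\}\big) \leq \sum_j \mu(Q_j) \ll A\sum_j |I_j|^{p/q} \leq A\,|O_\lambda|^{p/q} = A\,\big|\{f^* > \lambda\}\big|^{p/q}.\]

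Finally, I would integrate this through the layer-cake formula and close with Chebyshev's inequality. Setting $m(\lambda)=|\{f^*>\lambda\}|$ and using $m(\lambda)\leq\lambda^{-q}\|f^*\|_q^q$ to control the factor $m(\lambda)^{p/q-1}$, one obtains
\[\int_{\mathbb{C}_{1/2}}|f|^p\,d\mu = p\int_0^\infty \lambda^{p-1}\mu\big(\{|f|>\lambda\}\big)\,d\lambda \ll A\,\|f^*\|_q^{p-q}\int_0^\infty \lambda^{q-1}m(\lambda)\,d\lambda \asymp A\,\|f^*\|_q^{p} \ll A\,\|f\|_{H^q}^p,\]
which is exactly \eqref{eq:pqC} with $C\asymp A^{1/p}$. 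I expect the main obstacle to be the sufficiency direction, and within it the two analytic inputs that make the covering work: the non-tangential maximal theorem in $\mathbb{C}_{1/2}$, and especially the verification that the super-level sets $\{|f|>\lambda\}$ are genuinely captured by Carleson boxes erected over $\{f^*>\lambda\}$, since it is precisely this step that converts the box hypothesis $\mu(Q)=\mathcal{O}(\varepsilon^{p/q})$ into the bound $|\{f^*>\lambda\}|^{p/q}$ and thereby feeds the maximal function into the final estimate.
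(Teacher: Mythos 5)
Your proof is correct. The paper does not prove this lemma at all --- it simply cites Theorem~9.4 of Duren's book --- and your argument (test functions $\bigl(s-1/2+\varepsilon-i\tau\bigr)^{-2/q}$ for necessity; the non-tangential maximal theorem, the covering of $\{|f|>\lambda\}$ by Carleson boxes over the components of $\{f^*>\lambda\}$, the superadditivity of $t\mapsto t^{p/q}$, and the layer-cake integration for sufficiency) is precisely the classical proof of that cited result, correctly transplanted from the disc to the half-plane.
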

Let us now extend a result from \cite{OS12} to the case $p< q$, which will be needed in the proof of Theorem~\ref{thm:linearHp} for the range $2<p<\infty$.

\begin{lem} \label{lem:pq2}
	Fix $1\leq q\leq p<\infty$ and let $\mu$ be a positive Borel measure on $\mathbb{C}_{1/2}$.
	\begin{itemize}
		\item[(i)] If $\mu$ is $p/q$--Carleson for $\mathcal{H}^q$, then $\mu$ is $p/q$--Carleson for $H^q(\mathbb{C}_{1/2})$.
		\item[(ii)] If the embedding \eqref{eq:localemb} holds for $q$ and $\mu$ has bounded support, then the converse is true.
	\end{itemize}
\end{lem}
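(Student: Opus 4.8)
The plan is to handle the two implications separately, reducing each to the geometric box estimate $\mu\big(Q(\tau,\veps)\big)=\mathcal{O}\big(\veps^{p/q}\big)$ furnished by Lemma~\ref{lem:pq1} (valid since $q\leq p$), and then either testing it against explicit functions or transferring it through the conformal map $\mathcal{T}$.

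For part (i), it suffices by Lemma~\ref{lem:pq1} to verify the box estimate uniformly in $\tau\in\RR$ in order to conclude that $\mu$ is $p/q$--Carleson for $H^q(\CC_{1/2})$. To this end I would test the assumed $p/q$--Carleson inequality \eqref{eq:pqC} for $\mathcal{H}^q$ on the reproducing-kernel-type functions already used in the proof of Theorem~\ref{thm:bergmanemb}, namely
\[f_\veps(s)=\frac{[\zeta(s+1/2+\veps)]^{2/q}}{[\zeta(1+2\veps)]^{1/q}},\]
which satisfy $\|f_\veps\|_{\mathcal{H}^q}=1$. Since $\zeta(w)\asymp(w-1)^{-1}$ near $w=1$, one checks that $|f_\veps(s)|\asymp\veps^{-1/q}$ throughout the square $Q(0,\veps)$, so that $|f_\veps|^p\gg\veps^{-p/q}$ there. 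Feeding $f_\veps$ into \eqref{eq:pqC} yields $\veps^{-p/q}\mu\big(Q(0,\veps)\big)\ll\int|f_\veps|^p\,d\mu\ll\|f_\veps\|_{\mathcal{H}^q}^p=1$. Since the $\mathcal{H}^q$-norm is invariant under vertical translation, replacing $f_\veps$ by its (equinormal) vertical translate centred at $1/2+i\tau$ gives the same bound for every $\tau$, which is exactly the required box estimate with a constant independent of $\tau$.

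For part (ii) I would again rewrite the hypothesis, via Lemma~\ref{lem:pq1}, as the box condition $\mu\big(Q(\tau,\veps)\big)=\mathcal{O}\big(\veps^{p/q}\big)$, and then push it forward to $\DD$ through $\mathcal{T}^{-1}$. Writing $\widetilde\mu$ for this pushforward, the assumption that $\supp\mu$ is bounded means that $\supp\widetilde\mu$ is a compact subset of $\overline{\DD}\setminus\{-1\}$, on which $\mathcal{T}$ is bi-Lipschitz; consequently the half-plane box condition transfers to the classical disc box condition $\widetilde\mu\big(\widetilde Q\big)=\mathcal{O}\big(\ell(\widetilde Q)^{p/q}\big)$, so that $\widetilde\mu$ is a $p/q$--Carleson measure for $H^q(\DD)$. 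The classical Carleson embedding theorem on the disc then gives $\int_{\DD}|g|^p\,d\widetilde\mu\ll\|g\|_{H^q(\DD)}^p$ for $g\in H^q(\DD)$. Applying this to $g=f\circ\mathcal{T}$ for a Dirichlet polynomial $f$ (which is continuous on $\overline{\DD}$, hence in $H^q(\DD)$) and recalling from \eqref{eq:Hpi} that $\|f\circ\mathcal{T}\|_{H^q(\DD)}=\|f\|_{H^q_{\operatorname{i}}}$, a change of variables yields
\[\int_{\CC_{1/2}}|f|^p\,d\mu=\int_{\DD}|f\circ\mathcal{T}|^p\,d\widetilde\mu\ll\|f\|_{H^q_{\operatorname{i}}}^p.\]
Here the local embedding \eqref{eq:localemb}, in its equivalent form $\|f\|_{H^q_{\operatorname{i}}}\ll\|f\|_{\mathcal{H}^q}$, upgrades the right-hand side to $\|f\|_{\mathcal{H}^q}^p$, and density of Dirichlet polynomials in $\mathcal{H}^q$ finishes the argument.

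The main obstacle, and the only place where hypotheses beyond $q\leq p$ are used, is part (ii): one must ensure that the half-plane box condition genuinely passes to the disc, which is precisely why boundedness of $\supp\mu$ (keeping us away from the singular point $z=-1$ of $\mathcal{T}$, where the two Hardy structures diverge) and the embedding \eqref{eq:localemb} for $q$ are both indispensable. Part (i), by contrast, is essentially free: it relies only on the explicit norm identity $\|f_\veps\|_{\mathcal{H}^q}=1$ and the vertical-translation invariance of the $\mathcal{H}^q$-norm, using neither the embedding nor any support assumption.
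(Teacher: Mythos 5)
Your proof is correct. Part (i) is, in substance, the paper's own argument: the authors test \eqref{eq:pqC} against $f_k(s)=[\zeta(s+1/2+\varepsilon_k+i\tau_k)]^{2/q}$, whose $\mathcal H^q$-norm is $[\zeta(1+2\varepsilon_k)]^{1/q}\asymp\varepsilon_k^{-1/q}$ --- exactly your normalized vertical translate --- and read off $\mu(Q_k)\ll\varepsilon_k^{p/q}$ before concluding with Lemma~\ref{lem:pq1}; they merely phrase it as a contradiction argument where you argue directly.

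For part (ii) you take a genuinely different, though equally valid, route. The paper stays in the half-plane: since $\mu$ has bounded support, $\int|f|^p\,d\mu\ll\int|F|^p\,d\mu$ with $F(s)=f(s)/(s+1/2)^{2/q}$ (the trick of \eqref{eq:compactmuest}); applying the hypothesis to $F\in H^q(\CC_{1/2})$ gives $\int|F|^p\,d\mu\ll\|F\|^p_{H^q(\CC_{1/2})}\asymp\|f\|^p_{H^q_{\operatorname{i}}}$, and \eqref{eq:localemb} finishes. You instead convert the hypothesis into the box condition via Lemma~\ref{lem:pq1}, transplant the measure to $\DD$ through $\mathcal T^{-1}$ (the bounded support keeping everything away from the singular point $-1$, so half-plane boxes correspond to disc boxes up to bounded distortion, with large boxes handled by finiteness of $\mu$), invoke Duren's theorem in its original disc form, and pull back through the identity $\|f\circ\mathcal T\|_{H^q(\DD)}=\|f\|_{H^q_{\operatorname{i}}}$ before applying the embedding. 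Both arguments use the bounded-support hypothesis for the same purpose --- to pass between the non-conformal and the conformally invariant Hardy structures --- but yours does it geometrically at the level of the measure, while the paper does it analytically at the level of the test function. The paper's multiplier trick is a little shorter because it never re-derives the geometric characterization; your version has the minor virtue of quoting the classical Carleson embedding exactly where it lives and of correctly isolating where each hypothesis enters.
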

\begin{proof}
	To prove part (i), we use Lemma~\ref{lem:pq1} and argue by contradiction as in the first part of the proof of Lemma~\ref{lem:Carlesonmeasures}. In particular, assume that $\mu$ is a $p/q$--Carleson measure $\mathcal{H}^q$. Consider a sequence of Carleson squares $Q_k=Q(\tau_k,\varepsilon_k)$ and the Dirichlet series
	\[f_k(s) = [\zeta(s+1/2+\varepsilon_k+i\tau_k)]^{2/q},\]
	which satisfies $\|f_k\|_{\mathcal{H}^q}=[\zeta(1+2\varepsilon_k)]^{1/q}$. We deduce from \eqref{eq:pqC} that $\mu(Q_k) \ll \varepsilon_k^{p/q}$ as $\varepsilon_k\to0$ and conclude as in Lemma~\ref{lem:Carlesonmeasures}. Part (ii) follows from a routine application of the embedding. We proceed as in the proof of the second part of Lemma~\ref{lem:Carlesonmeasures}, setting now $F(s) = f(s)/(s+1/2)^{2/q}$ and using the same trick as in \eqref{eq:compactmuest}.
\end{proof}

To make the statement of our final lemma more convenient, we will move to $\mathbb{C}_0$ as in \cite{BB15}. This change is easily carried out when working with composition operators, since it corresponds to the changes $f(s) \mapsto f(s+1/2)$ and $\varphi(s) \mapsto \varphi(s+1/2)-1/2$. In particular, the translated $f\in\mathcal{D}_\alpha$ is embedded in $D_{2^\alpha-1,\operatorname{i}}(\mathbb{C}_0)$.

Let $d_{\operatorname{H}}(z,w)$ be the hyperbolic distance in the half-plane $\CC_0$ which is defined by 
\[\frac{1-e^{-d_{\operatorname{H}}(z,w)}}{1+e^{-d_{\operatorname{H}}(z,w)}}=\left|\frac{z-w}{z+\overline w}\right|\]
and let $B_{\operatorname{H}}(s,r)$ be the hyperbolic disc of centre $s$ and radius $r\in(0,1)$. It is well-known that $B_{\operatorname{H}}(s,r)$, for $s=\sigma+it$, is simply the Euclidean disc of centre $(\sigma\cosh r,t)$ and radius $\sigma\sinh r$. In particular, we shall use that if $r$ is not too big, then $B_{\operatorname{H}}(s,r)$ is contained in $[\sigma/2,2\sigma]\times [t-\sigma,t+\sigma]$. 

Luecking in \cite{Lue93} has characterized the $p/q$--Carleson measures of the (unweighted) Bergman spaces in the unit disc when $p<q$. As observed in \cite{PZ15}, his proof carries on the weighted Bergman spaces. 
 The next lemma is simply \cite[Thm.~B]{PZ15} with $p=2$ and $n=0$, translated from $D_\beta(\mathbb{D})$ to $D_{\beta,\operatorname{i}}(\mathbb{C}_0)$ using $\mathcal{T}-1/2$.

\begin{lem} \label{lem:luecking}
Let $1\leq p< 2$ and let $\mu$ be a positive Borel measure on $\mathbb{C}_0$. Then $\mu$ is $p/2$--Carleson for $D_{\beta,\operatorname{i}}(\mathbb{C}_0)$ if and only if for some (any) $r>0$,
\begin{equation} \label{eq:lint}
	\int_{\mathbb{C}_0} \left(\mu\big(B_{\operatorname{H}}(s,r)\big)\right)^\frac{2}{2-p}\,\sigma^{\frac{(p-4)-p\beta}{2-p}}\,|s+1|^{\frac{2p(\beta+1)}{2-p}}\,dm_1(s)<\infty.
\end{equation}
\end{lem}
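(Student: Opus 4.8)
The plan is to deduce Lemma~\ref{lem:luecking} directly from \cite[Thm.~B]{PZ15} by carrying out the change of variables between the disc $\DD$ and the half-plane $\CC_0$ induced by the conformal map $\mathcal{T}-1/2$, paying careful attention to how the weight, the Jacobian, and the hyperbolic discs transform. Luecking's theorem, as extended by \cite{PZ15}, characterizes the $p/2$--Carleson measures of $D_\beta(\DD)$ (with $p<2$, so that we are genuinely in the ``small exponent'' regime) via an integrability condition of the form $\int_{\DD}\left(\mu(D(z,r))\right)^{2/(2-p)}(1-|z|^2)^{c(\beta,p)}\,dm_1(z)<\infty$, where $D(z,r)$ is a pseudohyperbolic (Bergman) disc. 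Since the statement of the lemma asserts that the integrability condition \eqref{eq:lint} holds for \emph{some} $r$ if and only if it holds for \emph{any} $r$, and this invariance is already part of the content of \cite[Thm.~B]{PZ15}, I need only transfer the single condition across the conformal map and verify that the exponents match.

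First I would recall that the map $\Phi_0 := \mathcal{T}-1/2$ sends $\DD$ conformally onto $\CC_0$, with $\Phi_0(z)=\frac{1-z}{1+z}$, and that by definition of $D_{\beta,\operatorname{i}}(\CC_0)$ a measure $\mu$ on $\CC_0$ is $p/2$--Carleson for $D_{\beta,\operatorname{i}}(\CC_0)$ precisely when the pulled-back measure $\widetilde{\mu}:=\mu\circ\Phi_0$ (i.e.\ $\widetilde\mu(E)=\mu(\Phi_0(E))$) is $p/2$--Carleson for $D_\beta(\DD)$; this is immediate from the definition \eqref{eq:DBi} of the conformally invariant norm, since $\|f\|_{D_{\beta,\operatorname{i}}(\CC_0)}=\|f\circ\Phi_0\|_{D_\beta(\DD)}$ and $\int_{\CC_0}|f|^p\,d\mu=\int_\DD |f\circ\Phi_0|^p\,d\widetilde\mu$. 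Next I would substitute $\widetilde\mu$ into the disc-side integrability criterion from \cite{PZ15}, then change variables $z\mapsto s=\Phi_0(z)$ in the resulting integral over $\DD$. The two ingredients of this substitution are: the conformal invariance of the hyperbolic geometry, so that the pseudohyperbolic disc $D(z,r)$ on the disc side maps exactly to the hyperbolic disc $B_{\operatorname{H}}(s,r)$ on the half-plane side (hence $\widetilde\mu(D(z,r))=\mu(B_{\operatorname{H}}(s,r))$); and the standard transformation identities $1-|z|^2=\frac{4\sigma}{|s+1|^2}$ and the Jacobian $dm_1(z)=\frac{4}{|s+1|^4}\,dm_1(s)$, where $s=\sigma+it$.

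The bookkeeping step is then purely algebraic: after substituting $1-|z|^2\asymp \sigma/|s+1|^2$ and the Jacobian $|s+1|^{-4}$ into the disc-side weight $(1-|z|^2)^{c(\beta,p)}\,dm_1(z)$, I would collect the total power of $\sigma$ and of $|s+1|$ and check that they reproduce the exponents $\frac{(p-4)-p\beta}{2-p}$ on $\sigma$ and $\frac{2p(\beta+1)}{2-p}$ on $|s+1|$ appearing in \eqref{eq:lint}. With $p=2$ and $n=0$ fixed in the notation of \cite{PZ15}, the exponent $c(\beta,p)$ is explicitly determined, so this is a finite computation. I expect this exponent-matching to be the one place where a sign or a factor could go astray, and it is worth writing out, but it is routine. \textbf{The only genuine subtlety} is making sure the ``some $r$ $\iff$ any $r$'' clause survives the transfer — but since the hyperbolic discs map onto hyperbolic discs of the \emph{same} radius $r$ under the conformal map, the equivalence over $r$ is preserved verbatim from \cite[Thm.~B]{PZ15}, and no separate argument is needed.
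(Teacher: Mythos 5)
Your proposal is correct and follows exactly the route the paper takes: the paper's entire ``proof'' is the one-line remark that the lemma is \cite[Thm.~B]{PZ15} with $p=2$, $n=0$ translated to $\CC_0$ via $\mathcal{T}-1/2$, and your sketch simply fills in that translation (the identities $1-|z|^2=4\sigma/|s+1|^2$, the Jacobian $4|s+1|^{-4}$, and conformal invariance of the hyperbolic discs do reproduce the exponents $\frac{(p-4)-p\beta}{2-p}$ and $\frac{2p(\beta+1)}{2-p}$). Nothing further is needed.
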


We are finally in a position to prove Theorem \ref{thm:linearHp}.
\begin{proof}[Proof of Theorem \ref{thm:linearHp}]
We first assume $p>2$. We begin by fixing some positive integer $k$ and consider $q=2k< p < 2k+2$. We want to investigate when $\mathcal{C}_\varphi$ maps $\mathcal{H}^q$ to $\mathcal{H}^p$. Since $p>q$, this also means that $\mathcal{C}_\varphi$ acts boundedly on $\mathcal{H}^p$. Setting $\mu_\varphi:=\mu_{0,\varphi}$, we argue as in the proof of Lemma~\ref{lem:carlesoncond} to find that boundedness of $\mathcal{C}_\varphi\colon\mathcal{H}^q\to\mathcal{H}^p$ is equivalent to 
	\begin{equation} \label{eq:pqbound}
		\left(\int_{\mathbb{C}_{1/2}}|f(s)|^p\,d\mu_\varphi(s)\right)^\frac{1}{p} \ll \|f\|_{\mathcal{H}^q}.
	\end{equation}
	Using Lemma~\ref{lem:pq1} and Lemma~\ref{lem:pq2}, while keeping in mind that the embedding \eqref{eq:localemb} holds for $q=2k$, we find that \eqref{eq:pqbound} is equivalent to
	\[\mu_\varphi\big(Q(\tau,\epsilon)\big)\ll \varepsilon^{p/q},\]
	for every Carleson square $Q$. However, from \cite[Lem.~8.2]{QS15} we know that $\mu_\varphi\big(Q(\tau,\epsilon)\big) \ll \varepsilon^{(d+1)/2}$. Hence we require of $d$ that
	\[d\geq \frac{2p}{q}-1 = \frac{p}{k}-1.\]
	It is easy to check that $d\geq2$ is sufficient if $p \in (2,3]\cup(4,\infty)$ and $d\geq3$ is sufficient if $3<p<4$.
	
	We now consider $1\leq p < 2$. First, we use Theorem~\ref{thm:linear} with $\alpha=1$ and $\beta=0$ to conclude that if $d\geq3$, then $\mathcal{C}_\varphi$ maps $\mathcal{D}_1$ boundedly into $\mathcal{D}_0=\mathcal{H}^2$. To conclude that $\mathcal{C}_\varphi\colon\mathcal{H}^p\to\mathcal{H}^p$ is bounded, we use the inequalities
	\[\|f\|_{\mathcal{D}_1} \leq \|f\|_{\mathcal{H}^1} \leq \|f\|_{\mathcal{H}^p} \leq \|f\|_{\mathcal{H}^2},\]
	where the first one is Helson's inequality.
	
	It remains to prove that $d\geq2$ is sufficient when $1\leq p < 2$ and $p\in(3,4)$. The trivial identity
\[\left\|f\circ \varphi\right\|_{\mathcal{H}^{2p}}^{2p} = \left\|f^2 \circ \varphi \right\|_{\mathcal{H}^p}^p\]
shows that it is enough to conclude for $p\in [1,2)$. 	
 Assume that $\varphi(s) = c_1 + c_{p_1} p_1^{-s} + c_{p_2} p_2^{-s}$ has unrestricted range. Using Theorem \ref{thm:bergmanemb},  we find that it is sufficient to verify that
	\[\left(\int_{\mathbb{T}^2}|f\circ\Phi(z)|^p\,d\nu(z)\right)^\frac{1}{p} = \left(\int_{\mathbb{C}_{1/2}} |f(s)|^p\, d\mu_{0,\varphi}(s)\right)^\frac{1}{p} \ll \|f\|_{D_{\frac 2p-1,\operatorname{i}}(\mathbb{C}_{1/2})}.\]
	We now move to $\mathbb{C}_0$ to use Lemma~\ref{lem:luecking}, and subtract $1/2$ from $\varphi$. Arguing as in \eqref{eq:linearbohr} we may assume that $\widetilde{\Phi}(z) = |c_{p_1}|(1-z_1) + |c_{p_2}|(1-z_2)$, and we consider the measure $\widetilde{\mu}$ defined on $\mathbb{C}_0$ by
	\[\widetilde{\mu}(E) = \nu\big(\big\{(z_1,z_2)\in\mathbb{T}^2\,:\, \widetilde{\Phi}(z_1,z_2)\in E\big\}\big), \qquad \qquad E \subset \mathbb{C}_0.\]
	We need to investigate for which $1\leq p<2$ the measure $\widetilde{\mu}$ satisfies the condition of Lemma~\ref{lem:luecking} with 
$\beta=2/p-1$. Recall that, for $s=\sigma+it$ and some suitably small $r>0$,
\begin{eqnarray*}
\widetilde{\mu}\big(B_{\operatorname{H}}(s,r)\big)&\leq&\widetilde{\mu}\big([\sigma/2,2\sigma]\times [t-\sigma,t+\sigma]\big)\\
&=&\nu\left(\left\{(z_1,z_2)\in\mathbb{T}^2\,:\, |c_{p_1}|(1-z_1)+|c_{p_2}|(1-z_2)\in [\sigma/2,2\sigma]\times [t-\sigma,t+\sigma]\right\}\right).
\end{eqnarray*}
	Since $\widetilde{\Phi}(\mathbb{T}^2)$ is a bounded subset of $\overline{\mathbb{C}_0}$, it is clear that $\widetilde{\mu}\big(B_{\operatorname{H}}(s,r)\big)=0$ when $\mre(s)$ is large enough, say $\sigma>\sigma_0$, or when $|\mim(s)|$ is large enough, say $|t|>t_0$. This means that the integral \eqref{eq:lint} in our case is equal to
\[ \int_0^{\sigma_0} \int_{|t|\leq t_0} 	\left(\widetilde{\mu}\big(B_{\operatorname{H}}(s,r)\big)\right)^\frac{2}{2-p}\,\sigma^{\frac{2p-6}{2-p}}\,|s+1|^{\frac{4}{2-p}} \frac{dtd\sigma}{\pi}\asymp \int_0^{\sigma_0} \int_{|t|\leq t_0} \left(\widetilde{\mu}\big(B_{\operatorname{H}}(s,r)\big)\right)^\frac{2}{2-p}\,\sigma^{\frac{2p-6}{2-p}}\,dt\,d\sigma=:I.\]
	This means we only need to prove that $I<\infty$ for any fixed pair $(\sigma_0,t_0)$.  Because $ \widetilde{\mu}\big(B_{\operatorname{H}}(s,r)\big)$ is bounded, we may in fact assume that $\sigma_0$ is very small. Now, let us fix $s\in\CC_0$ with $\mre(s)\leq\sigma_0$ and let us consider $(\theta_1,\theta_2)\in [-\pi,\pi)^2$ such that $\widetilde{\Phi}(e^{i\theta_1},e^{i\theta_2})\in B_{\operatorname{H}}(s,r)$. Writing
	\[\mre\widetilde{\Phi}(e^{i\theta_1},e^{i\theta_2})=|c_{p_1}|\left(1-\cos(\theta_1)\right)+|c_{p_2}|\left(1-\cos(\theta_2)\right),\]
	it is clear that $\theta_1$ and $\theta_2$ are close to $0$, so that
	\[\theta_1^2+\theta_2^2\ll \mre \widetilde{\Phi}(e^{i\theta_1},e^{i\theta_2})\leq 2\sigma,\]
	and hence we conclude that $|\theta_1|,|\theta_2|\ll \sigma^{1/2}$. On the other hand, this implies that
	\[\left|\mim \widetilde{\Phi}(e^{i\theta_1},e^{i\theta_2})\right|=\left||c_{p_1}|\sin(\theta_1)+|c_{p_2}|\sin(\theta_2)\right|\ll\sigma^{1/2},\]
	which yields that $\widetilde{\mu}\big(B_{\operatorname{H}}(s,r)\big)=0$ provided $|t|\gg\sigma^{1/2}$. Otherwise, for a fixed value of $\theta_2$, we note that $\theta_1$ belongs to some interval with length dominated by $C\sigma$. Therefore, by Fubini's theorem, $\widetilde{\mu}\big(B_{\operatorname{H}}(s,r)\big)\ll \sigma^{{3/2}}$ where the involved constant does not depend on $t$. In total, this means that we require
	\[I \ll \int_0^{\sigma_0} \int_{|t|\ll\sigma^{1/2}} \sigma^{\frac{2p-3}{2-p}}\,dt\,d\sigma \asymp \int_0^{\sigma_0} \sigma^{\frac{2p-3}{2-p}+\frac 12}\,d\sigma < \infty.\]
This last integral is convergent for $p\geq 1$.
\end{proof}

\begin{rem}
	It is possible to generate more examples from the results in \cite{BB15} or from the results of Section \ref{sec:poly} in combination with Theorem~\ref{thm:bergmanemb}. If $2k< p<2k+2$, we can choose any Dirichlet polynomial with $\kappa\geq p/2k$, where $\kappa$ as defined in \cite[Lem.~10]{BB15}. However, this also illustrates the disadvantage of this interpolation method, since the natural condition is $\kappa\geq1$, which corresponds to the case $d=1$ in \eqref{eq:linear}.
\end{rem}

We end this section by emphasizing that results with $d=1$, or results for Dirichlet polynomials $\varphi\in\mathcal{G}$ with unrestricted range and $\dim(\varphi)=1$, cannot be obtained from the type of Carleson measure arguments employed in this section and the local embedding \eqref{eq:localemb} seems to be completely unavoidable in this setting.


\section{The multiplicative Hilbert matrix} \label{sec:hilbert}

It was asked in \cite[Sec.~6]{BPSSV14} whether the multiplicative Hilbert matrix introduced in the same paper has a bounded symbol on the polytorus $\mathbb{T}^\infty$, or, equivalently, whether the functional
\begin{equation} \label{eq:hilbertfunc}
	L(f) = \int_{1/2}^\infty (f(s)-a_1)\,ds
\end{equation}
is bounded on $\mathcal{H}^1$. It follows by standard Carleson measure techniques that if the embedding \eqref{eq:localemb} holds for $\mathcal{H}^p$, then the functional \eqref{eq:hilbertfunc} acts boundedly on $\mathcal{H}^p$. It is therefore only known that the functional is bounded on $\mathcal{H}^p$ when $2\leq p < \infty$.

Returning to the composition operator with symbol $\varphi(s)=3/2-2^{-s}$, we write out explicitly the associated Carleson measure, finding that boundedness of $\mathcal{C}_\varphi$ on $\mathcal{H}^p$ is equivalent to the inequality
\[\int_{3/2+\mathbb{T}}|P(z)|^p\,dm(z) \ll \|P\|_{\mathcal{H}^p}^p,\]
for every Dirichlet polynomial $P$.  If we apply the characterization of Carleson measures for $H^p(\mathbb{T})$, we find furthermore that
\[\int_{1/2}^1 |P(z)|^p\,dz \ll \int_{3/2+\mathbb{T}}|P(z)|^p\,dm(z).\]
From this and the results in \cite[Sec.~6]{BPSSV14} we observe that if $\mathcal{C}_\varphi$ acts boundedly on $\mathcal{H}^1$, then the multiplicative Hilbert matrix considered in \cite{BPSSV14} has a bounded symbol on the polytorus $\mathbb{T}^\infty$. In \cite{BPSSV14} it is only shown that the embedding \eqref{eq:localemb} implies that the multiplicative Hilbert matrix has a bounded symbol, so this observation is in some sense an improvement.

Using Theorem~\ref{thm:bergmanemb}, we can prove boundedness of $L$ on $\mathcal H^p$ for $p\in(1,\infty)$.
\begin{thm}\label{thm:hilbertmatrix}
The functional $L$ defined by \eqref{eq:hilbertfunc} is bounded on $\mathcal H^p$ for any $p>1$.
\end{thm}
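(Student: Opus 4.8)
The plan is to split into the ranges $p\ge 2$ and $1<p<2$. For $p\ge 2$ the boundedness of $L$ is already known (it follows from the validity of \eqref{eq:localemb} for even integers and standard Carleson techniques, as recalled above), so the new content is the range $1<p<2$, and this is where Theorem~\ref{thm:bergmanemb} enters. First I would make the functional explicit: for a Dirichlet polynomial $f(s)=\sum_{n\ge 1}a_n n^{-s}$ one has $\int_{1/2}^\infty n^{-s}\,ds=n^{-1/2}/\log n$, whence
\[L(f)=\int_{1/2}^\infty\big(f(s)-a_1\big)\,ds=\sum_{n\ge 2}\frac{a_n}{\sqrt n\,\log n}.\]
Since Dirichlet polynomials are dense in $\mathcal H^p$, it is enough to bound $|L(f)|$ by $\|f\|_{\mathcal H^p}$ on this class.

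The crucial reduction is to pass from $\mathcal H^p$ to the diagonal Hilbert space $\mathcal H_{w_p}$. The proof of Theorem~\ref{thm:bergmanemb} produced the contractive embedding \eqref{eq:newemb}, that is $\sum_{n\ge 1}|a_n|^2(p/2)^{\Omega(n)}\le\|f\|_{\mathcal H^p}^2$. Factoring $a_n/(\sqrt n\log n)=\big(a_n(p/2)^{\Omega(n)/2}\big)\cdot\big((2/p)^{\Omega(n)/2}/(\sqrt n\log n)\big)$ and applying Cauchy--Schwarz yields
\[|L(f)|\le\left(\sum_{n\ge 1}|a_n|^2\Big(\tfrac p2\Big)^{\Omega(n)}\right)^{\!1/2}\left(\sum_{n\ge 2}\frac{(2/p)^{\Omega(n)}}{n(\log n)^2}\right)^{\!1/2}\le\|f\|_{\mathcal H^p}\left(\sum_{n\ge 2}\frac{(2/p)^{\Omega(n)}}{n(\log n)^2}\right)^{\!1/2}.\]
Everything therefore reduces to the convergence of the last series. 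To check this I would set $y=2/p$, which lies in $(1,2)$ precisely because $1<p<2$, and invoke Lemma~\ref{lem:turan}: for $0<y<2$ we have $\sum_{n\le x}y^{\Omega(n)}\asymp x(\log x)^{y-1}$. Summation by parts then controls $\sum_{n\ge 2}y^{\Omega(n)}/(n(\log n)^2)$ by $\int_2^\infty(\log t)^{y-3}\,t^{-1}\,dt$, which after $u=\log t$ is $\int u^{y-3}\,du$ and converges exactly when $y<2$. Since $y=2/p<2$ for every $p>1$, the series is finite and the proof is complete. This computation also explains the exclusion of $p=1$: there $y=2$, Lemma~\ref{lem:turan} gives the larger order $(\log x)^2$, and the borderline series $\sum 2^{\Omega(n)}/(n(\log n)^2)$ diverges, consistent with the fact that boundedness of $L$ on $\mathcal H^1$ is open.

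The main conceptual point—and the step most likely to mislead—is the choice of Hilbert space. One is tempted to use the inequality of Theorem~\ref{thm:bergmanemb} directly and try to argue that $L$ is bounded on the Bergman space $D_{2/p-1,\operatorname{i}}$; but $L$ is \emph{not} bounded there, as one sees already on the functions corresponding to monomials, which fail to decay at the boundary point $\sigma=\infty$. Consequently the line-integral functional is genuinely unbounded on $D_{2/p-1,\operatorname{i}}$, and no Carleson-measure estimate on that space can succeed (the relevant measure is not even finite near the boundary point). The resolution is that the correct embedding to use is the \emph{diagonal} one \eqref{eq:newemb} into $\mathcal H_{w_p}$, a strictly smaller space on which $L$ \emph{is} bounded: the cancellation responsible for the convergence of the line integral is captured there as the convergence of the coefficient series, which is controlled cleanly by the average-order estimate of Lemma~\ref{lem:turan}.
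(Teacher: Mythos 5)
Your proof is correct, but it diverges from the paper's at the final step, so let me compare the two. After the common reduction to $1<p<2$, the paper applies Theorem~\ref{thm:bergmanemb} itself: it splits off the tail $\int_1^\infty$ (harmless, by absolute convergence of the Dirichlet series in $\mathbb{C}_1$) and reduces the main term to the boundedness of $f\mapsto\int_{1/2}^1 f(s)\,ds$ on $D_{2/p-1,\operatorname{i}}(\mathbb{C}_{1/2})$, equivalently of $g\mapsto\int_0^1 g(x)\,dx$ on $D_{2/p-1}(\mathbb{D})$, which follows by duality because $\sum_k (k+1)^{\alpha-1}z^k\in D_\alpha(\mathbb{D})$ exactly when $\alpha<1$. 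You instead bypass Theorem~\ref{thm:bergmanemb} and work directly with its key intermediate inequality \eqref{eq:newemb}, writing $L(f)=\sum_{n\ge2}a_n/(\sqrt n\log n)$ and applying Cauchy--Schwarz against the weight $(p/2)^{\Omega(n)}$; the resulting series $\sum_{n\ge 2}(2/p)^{\Omega(n)}/(n(\log n)^2)$ converges for $y=2/p<2$ by Lemma~\ref{lem:turan} and Abel summation. Both arguments are sound. Yours is more self-contained --- it needs neither the result of Olsen used to pass from $\mathcal H_{w_p}$ to $D_{2/p-1,\operatorname{i}}$ nor Luecking-type duality, and it makes the breakdown at $p=1$ ($y=2$, phase change in Lemma~\ref{lem:turan}) completely transparent --- while the paper's is shorter once Theorem~\ref{thm:bergmanemb} is in hand and localizes the difficulty to a one-variable duality computation. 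One correction to your closing paragraph: the assertion that ``no Carleson-measure estimate on $D_{2/p-1,\operatorname{i}}$ can succeed'' overshoots. It is true that the \emph{full} functional $\int_{1/2}^\infty$ cannot be bounded on the conformally invariant Bergman space (the point at infinity is a boundary point there, and constants already cause trouble), but the paper's splitting at $\sigma=1$ sidesteps this entirely, and the truncated functional $\int_{1/2}^1$ is bounded on $D_{2/p-1,\operatorname{i}}$ precisely for $p>1$. Your ``monomials'' justification is also not quite right: one checks that $\|n^{-s}\|_{D_{\beta,\operatorname{i}}}\asymp n^{-1/2}(\log n)^{-\beta/2}$, so $|L(n^{-s})|/\|n^{-s}\|_{D_{\beta,\operatorname{i}}}\asymp(\log n)^{\beta/2-1}\to0$ for $\beta<2$, and monomials do not witness unboundedness.
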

\begin{proof}
We may restrict ourselves to $p\in (1,2)$. By Theorem~\ref{thm:bergmanemb}, it is sufficient to verify that the functional of integration from $1/2$ to 1 is bounded on $D_{2/p-1,\operatorname i}(\CC_{1/2})$ or, equivalently, that the functional of integration from 0 to 1 is bounded on $D_{2/p-1}(\DD)$. By duality, this is true since
\[f_\alpha(z) = \sum_{k=0}^\infty (k+1)^{\alpha-1} z^k\]
is in $D_\alpha(\mathbb{D})$ if and only if $\alpha<1$, so that $p>1$ is sufficient for $L$ to act boundedly on $\mathcal{H}^p$. 
\end{proof}
This theorem has an interesting corollary. Write $L(f)=\langle f,g \rangle_{\mathcal{H}^2}$, where
\[g(s) = \sum_{n=2}^\infty \frac{1}{\sqrt{n}\log{n}}n^{-s}.\]
We first have the following computation.
\begin{lem} \label{lem:hilbertsymb}
	$g \in \mathcal{H}^p$ if and only if $p<4$.
\end{lem}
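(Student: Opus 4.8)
The plan is to determine for which $p$ the Dirichlet series
\[
g(s)=\sum_{n=2}^\infty \frac{1}{\sqrt n\,\log n}\,n^{-s}
\]
belongs to $\mathcal H^p$, and my strategy is to pass through the embedding \eqref{eq:newemb} for the upper bound and a direct half-plane estimate for the lower bound. For the sufficiency direction (showing $g\in\mathcal H^p$ when $p<4$), I would \emph{not} work directly with the Besicovitch norm but instead relate $g$ to a power of the zeta function. The natural guess is that $g$ behaves near its abscissa like a fractional derivative or integral of $\log\zeta$, since $\sum_{n\geq 2}(\log n)^{-1}n^{-s}$ is a primitive (in $s$) of something comparable to $\log\zeta(s)$. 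More usefully, one recognizes that the coefficients $1/(\sqrt n\,\log n)$ are, up to the shift $s\mapsto s+1/2$, exactly the coefficients one gets from integrating $\zeta$; concretely, $g(s)=\int_{1/2}^\infty\big(\zeta(s+w)-1\big)\,dw$ after the substitution aligning the $\sqrt n$ with the shift. So $g$ is essentially a smoothed/integrated zeta function, and its membership in $\mathcal H^p$ reduces to understanding the $\mathcal H^p$-size of an object comparable to $\zeta(s+1/2)^{\text{something}}$ near $s=1/2$.

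The cleaner route is to use the known result that $\zeta(s+1/2+\varepsilon)^{2/p}$ has bounded $\mathcal H^p$ norm (cited from \cite[Thm.~3]{Bayart02} and used already in the proof of Theorem~\ref{thm:bergmanemb}), which tells us precisely where the threshold sits: a Dirichlet series with a singularity at $s=1/2$ of strength comparable to $(s-1/2)^{-\gamma}$ lies in $\mathcal H^p$ exactly when $\gamma<1/p$ in an appropriate sense. Since $g$ arises from integrating $\zeta(s+1/2)$ once, its singularity at $1/2$ is milder than that of $\zeta(s+1/2)$ itself, namely of logarithmic type $\log\frac{1}{s-1/2}$. The upper bound $p<4$ then should come from applying the embedding \eqref{eq:newemb}: computing $\sum_n |a_n|^2 (p/2)^{\Omega(n)}$ with $a_n=1/(\sqrt n\,\log n)$ and $n$ effectively square-free (the dominant contribution), which by Lemma~\ref{lem:turan} with $y=p/2$ reduces to a convergence question for $\sum_{n} \frac{1}{n(\log n)^2}(\log x)^{p/2-1}$-type sums; the relevant tail converges precisely when the exponent condition forces $p/2<2$, i.e.\ $p<4$. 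I would organize the sufficiency proof as: (1) identify $g$ with an integrated zeta function, (2) apply the contractive embedding \eqref{eq:newemb} to bound $\|g\|_{\mathcal H^p}$ by a weighted $\ell^2$-sum, and (3) evaluate that sum by partial summation using Lemma~\ref{lem:turan}.

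For the necessity direction (showing $g\notin\mathcal H^p$ when $p\geq 4$), I would test against the extremal family $f_\varepsilon$ or directly estimate a lower bound for $\|g\|_{\mathcal H^p}$ from below on the boundary line $\mre(s)=1/2$. The most transparent approach is to exploit the reverse embedding: $\mathcal H^p$ functions are controlled by their behavior on $\mathbb C_{1/2}$, and one can show $\|g\|_{\mathcal H^p}^p\gg \int_0^1 |g(1/2+it)|^p\,dt$ fails to be finite when $p\geq 4$ because $g(1/2+it)\sim \log\frac{1}{|t|}$ type growth is too large, or more robustly by computing that the weighted coefficient sum $\sum_n |a_n|^2(p/2)^{\Omega(n)}$ \emph{diverges} at $p=4$ (since $y=p/2=2$ triggers the phase change in Lemma~\ref{lem:turan}, giving $(\log x)^2$ and hence a divergent sum $\sum \frac{(\log x)}{x(\log x)^2}\cdot\frac{1}{\log x}$-style tail). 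However, divergence of this \emph{lower} bound only shows $g$ fails the embedded-space membership, which is \emph{weaker} than failing $\mathcal H^p$-membership; so the genuine obstacle is the necessity direction.

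The main obstacle I anticipate is exactly this lower bound for $p\geq 4$: the embedding \eqref{eq:newemb} is one-directional, so divergence of $\sum_n|a_n|^2(p/2)^{\Omega(n)}$ does not by itself prove $g\notin\mathcal H^p$. To close this gap I would instead argue on the half-plane directly, using that $g$ extends analytically with a known logarithmic singularity at $s=1/2$ and invoking a converse embedding or a direct Besicovitch-norm computation. Concretely, I expect the cleanest tool to be the identity $L(f)=\langle f,g\rangle_{\mathcal H^2}$ together with the boundedness of $L$ on $\mathcal H^{p'}$ (the dual exponent) established in Theorem~\ref{thm:hilbertmatrix}: by duality, $g\in\mathcal H^p$ would be essentially equivalent to $L$ being bounded on $\mathcal H^{p'}$, and Theorem~\ref{thm:hilbertmatrix} gives boundedness precisely for $p'>1$, i.e.\ $p<\infty$, which is not sharp enough alone, so the finer threshold $p<4$ must come from the precise singularity analysis rather than from soft duality. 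I would therefore lead with the integrated-zeta identification and a careful singularity estimate, treating the two inequalities $p<4$ and $p\geq4$ symmetrically through the strength of the logarithmic singularity, and flag the necessity direction as the step requiring the most care.
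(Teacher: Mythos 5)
There are genuine gaps in both directions. For sufficiency, you propose to ``apply the contractive embedding \eqref{eq:newemb} to bound $\|g\|_{\mathcal H^p}$ by a weighted $\ell^2$-sum,'' but \eqref{eq:newemb} goes the other way: it bounds the weighted sum $\sum_n|a_n|^2(p/2)^{\Omega(n)}$ \emph{above} by $\|g\|_{\mathcal H^p}^2$, and moreover it is only established for $1\leq p\leq2$, while the range that matters here is $2<p<4$. Convergence of that sum therefore proves nothing about membership of $g$ in $\mathcal H^p$. What the paper actually uses is the reverse-direction hypercontractive inequality valid for $p>2$ (from \cite[Thm.~3]{Seip13}), namely $\|g\|_{\mathcal H^p}\leq\big(\sum_n|a_n|^2[d(n)]^{2-4/p}\big)^{1/2}$, combined with the elementary fact (via $\sum_{n\leq x}[d(n)]^\alpha\asymp x(\log x)^{2^\alpha-1}$ and Abel summation) that $\sum_n[d(n)]^\alpha/(n(\log n)^2)<\infty$ if and only if $2^\alpha<2$, i.e.\ $\alpha=2-4/p<1$, i.e.\ $p<4$. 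Your threshold computation lands on the right number, but the inequality you invoke cannot deliver an upper bound.

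For necessity, your plan would fail for a structural reason. The singularity of $g$ at $s=1/2$ is only logarithmic ($g(1/2+\delta)\asymp\log(1/\delta)$), so any estimate based on the size of $g$ on or near the critical line would place $g$ in every local $L^p$ and can never detect the threshold $p=4$; the obstruction is arithmetic, not a half-plane singularity. (Also, your remark that divergence of the embedded-space norm is ``weaker'' than failing $\mathcal H^p$-membership has the logic backwards: if $\mathcal H^p$ embeds continuously into $\mathcal H_w$, then $g\notin\mathcal H_w$ does imply $g\notin\mathcal H^p$; the real problem is that \eqref{eq:newemb} is simply not available at $p=4$.) The paper's argument is a direct computation: $\|g\|_{\mathcal H^4}^4=\|g^2\|_{\mathcal H^2}^2$, the $n$-th coefficient of $g^2$ equals $n^{-1/2}\sum_{d|n,\,1<d<n}(\log d\,\log(n/d))^{-1}\gg n^{-1/2}\,d(n)/(\log n)^2$ since $\log d\,\log(n/d)\leq(\log n)^2/4$, and $\sum_n[d(n)]^2/(n(\log n)^4)=\infty$ because $2^2=4$ is not strictly less than $4$. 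Monotonicity of the $\mathcal H^p$ norms in $p$ then settles all remaining exponents. You would need to supply both of these missing ingredients to complete the proof.
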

\begin{proof}
	From the estimate $\sum_{n\leq x} [d(n)]^\alpha \asymp x(\log{x})^{2^\alpha-1}$ (see \cite{Wilson23}) and a standard computation with Abel summation, we find that
	\[\sum_{n=2}^\infty \frac{[d(n)]^\alpha}{n(\log{n})^\beta} < \infty\]
	if and only if $2^\alpha<\beta$. Assume first that $2<p<4$. Using \cite[Thm.~3]{Seip13} we have that
	\[\|g\|_{\mathcal{H}^p} \leq \left(\sum_{n=2}^\infty \frac{[d(n)]^{2-\frac{4}{p}}}{n(\log{n})^2}\right)^\frac{1}{2} < \infty,\]
	since $\alpha=2-4/p<1$ when $2<p<4$. For $p=4$, we compute
	\[\|g\|_{\mathcal{H}^4}^4 = \left\|g^2\right\|_{\mathcal{H}^2}^2 =\sum_{n=2}^\infty \frac{1}{n}\left(\sum_{\substack{d|n \\ 1<d<n}} \left(\log(d)\log(n/d)\right)^{-1}\right)^2 \gg \sum_{n=2}^\infty \frac{[d(n)]^2}{n(\log{n})^4} = \infty,\]
	so we are done.
\end{proof}
Theorem~\ref{thm:hilbertmatrix} and Lemma~\ref{lem:hilbertsymb} yield an explicit and natural example of the observation that $\mathcal{H}^q \subsetneq (\mathcal{H}^p)^\ast$ for H\"older conjugates $1<p,q<\infty$, as discussed in \cite[Sec.~3]{SS09}.
\begin{cor}
Let $1<p\leq4/3$ and set $1/p+1/q=1$. The Dirichlet series $g$ is in $(\mathcal H^{p})^* \backslash \mathcal{H}^q$.
\end{cor}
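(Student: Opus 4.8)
The plan is to read off both assertions directly from Theorem~\ref{thm:hilbertmatrix} and Lemma~\ref{lem:hilbertsymb}, the only genuine content being the bookkeeping of the duality pairing and the H\"older relation between $p$ and $q$.

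First I would establish that $g\in(\mathcal H^{p})^*$. By definition this means that the linear functional $f\mapsto\langle f,g\rangle_{\mathcal H^2}$ is bounded on $\mathcal H^p$. For a Dirichlet polynomial $f(s)=\sum_{n=1}^N a_n n^{-s}$ the elementary integral $\int_{1/2}^\infty n^{-s}\,ds=(\sqrt n\,\log n)^{-1}$ gives
\[
	L(f)=\int_{1/2}^\infty\big(f(s)-a_1\big)\,ds=\sum_{n=2}^N\frac{a_n}{\sqrt n\,\log n}=\langle f,g\rangle_{\mathcal H^2},
\]
so the two functionals coincide on the dense subspace of Dirichlet polynomials. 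Theorem~\ref{thm:hilbertmatrix} asserts that $L$ is bounded on $\mathcal H^p$ for every $p>1$, and the range $1<p\le 4/3$ is contained in this range; hence $f\mapsto\langle f,g\rangle_{\mathcal H^2}$ extends to a bounded functional on $\mathcal H^p$, which is exactly the statement $g\in(\mathcal H^{p})^*$.

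Next I would verify $g\notin\mathcal H^{q}$. Writing $1/p+1/q=1$, the hypothesis $1<p\le 4/3$ is equivalent to $3/4\le 1/p<1$, hence $0<1/q\le 1/4$, i.e.\ $q\ge 4$. By Lemma~\ref{lem:hilbertsymb}, $g\in\mathcal H^{q}$ if and only if $q<4$; since $q\ge 4$ we conclude $g\notin\mathcal H^{q}$. The endpoint $p=4/3$ corresponds to $q=4$, which is precisely the borderline case excluded by the lemma, and this is exactly why the statement stops at $p=4/3$.

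Combining the two steps yields $g\in(\mathcal H^{p})^*\setminus\mathcal H^{q}$. I do not anticipate any real obstacle here: the substance lies in Theorem~\ref{thm:hilbertmatrix} (boundedness of $L$, resting on Theorem~\ref{thm:bergmanemb}) and in Lemma~\ref{lem:hilbertsymb}. The only point to watch is that the admissible range of $p$ is dictated by the threshold $q=4$ of the lemma, together with the density argument needed to identify the bounded extension of $L$ with the pairing against $g$.
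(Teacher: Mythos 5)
Your proposal is correct and follows exactly the route the paper intends: the corollary is stated as an immediate consequence of Theorem~\ref{thm:hilbertmatrix} (giving $g\in(\mathcal H^p)^*$ via the identification $L(f)=\langle f,g\rangle_{\mathcal H^2}$) and Lemma~\ref{lem:hilbertsymb} (giving $g\notin\mathcal H^q$ since $q\geq 4$ when $1<p\leq 4/3$). The bookkeeping of the H\"older exponents and the density argument identifying $L$ with the pairing are both handled correctly.
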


\bibliographystyle{amsplain} 
\bibliography{bergcomp} 
\end{document}